\newtheorem{thm}{Theorem}
\newtheorem{lem}[thm]{Lemma}
\newtheorem{prp}[thm]{Proposition}
\theoremstyle{definition}
\newtheorem{df}[thm]{Definition}
\newtheorem{exa}[thm]{Example}
\newtheorem{cor}[thm]{Corollary}
\theoremstyle{remark}
\newtheorem*{rem}{Remark}
\numberwithin{equation}{section}
\numberwithin{thm}{section}
\DeclareMathOperator{\im}{im}
\DeclareMathOperator{\id}{id}
\DeclareMathOperator{\supp}{supp}
\DeclareMathOperator{\map}{map}
\DeclareMathOperator{\hocolim}{hocolim}
\DeclareMathOperator*{\Lhocolim}{hocolim}
\DeclareMathOperator{\colim}{colim}
\DeclareMathOperator*{\Lcolim}{colim}
\DeclareMathOperator{\Ob}{Ob}
\DeclareMathOperator{\Ch}{Ch}
\DeclareMathOperator{\Seq}{Seq}
\DeclareMathOperator{\len}{len}
\DeclareMathOperator{\nat}{nat}
\DeclareMathOperator{\beg}{beg}
\DeclareMathOperator{\qend}{end}
\DeclareMathOperator{\dom}{dom}
\DeclareMathOperator{\const}{const}
\DeclareMathOperator{\Free}{Free}
\DeclareMathOperator{\Vertic}{Vert}
\DeclareMathOperator{\Lip}{Lip}
\DeclareMathOperator{\dist}{dist}
\DeclareMathOperator{\PF}{PF}
\def\dTop{\mathbf{dTop}}
\def\hTop{\mathbf{hTop}}
\def\Set{\mathbf{Set}}
\def\vN{\vec{N}}
\def\vP{\vec{P}}
\def\R{\mathbb{R}}
\def\Z{\mathbb{Z}}
\def\Set{\mathbf{Set}}
\def\Top{\mathbf{Top}}
\def\fC{\mathfrak{C}}
\def\fR{\mathfrak{R}}
\def\cC{\mathcal{C}}
\def\bO{\mathbf{0}}
\def\bI{\mathbf{1}}
\def\bx{\mathbf{x}}
\def\by{\mathbf{y}}
\def\ba{\mathbf{a}}
\def\bb{\mathbf{b}}
\def\bn{\mathbf{n}}
\def\bc{\mathbf{c}}
\def\bbf{\mathbf{f}}
\def\vI{\vec{I}}
\def\cupdot{\mathop{\dot\cup}}
\def\bTam{\mathrm{Tam}}
\def\Tam{\mathrm{Tam}}
\def\bip{^*_*}
\title{Spaces of directed paths on pre-cubical sets II}
\author{Krzysztof Ziemia\'nski}
\thanks{University of Warsaw, Faculty of Mathematics, Informatics and Mechanics, ul. Banacha 2, 02-097 Warszawa, Poland.\\ E-mail: ziemians@mimuw.edu.pl. ORCID: 0000--0001--7695--4028.}
\begin{document}

\begin{abstract}
	For a given pre-cubical set ($\square$--set) $K$ with two distinguished vertices $\bO$, $\bI$, we prove that the space $\vP(K)_\bO^\bI$ of d-paths on the geometric realization of $K$ with source $\bO$ and target $\bI$ is homotopy equivalent to its subspace $\vP^t(K)_\bO^\bI$ of tame d-paths. When $K$ is the underlying $\square$--set of a Higher Dimensional Automaton $A$, tame d-paths on $K$ represent step executions of $A$. Then, we define the cube chain category of  $K$ and prove that its nerve is weakly homotopy equivalent to $\vP(K)_\bO^\bI$.
\end{abstract}

\maketitle

\section{Introduction}

A directed space, or a d-space \cite{Grandis-DHT1}, is a topological space $X$ with a distinguished family of paths $\vP(X)$, called d-paths, that contains all constant paths and is closed with respect to concatenations and non-decreasing reparametrizations. Directed spaces serve as models in concurrency: points of a given directed space represent possible states of the concurrent program, while d-paths represent its possible partial executions. An important question is what is the homotopy type of the space $\vP(X)_x^y$ of d-paths beginning at the point $x$ and ending at the point $y$. If $x$ and $y$ are the initial and the final state of the program, respectively, this is represents the ``execution space" of the program modeled by $X$. Also, calculating of some invariants of d-spaces, eg. component categories \cite{R-Inv, Z-Stable} and natural homology \cite{DGG}, requires knowledge of the homotopy types of d-path spaces between two particular points.

 In this paper, we consider this problem for d-spaces that are geometric realizations of pre-cubical sets, called also $\square$--sets. $\square$--sets play an important role in concurrency: Higher Dimensional Automata introduced by Pratt \cite{Pratt} are $\square$--sets equipped with a labeling of edges, and then executions of a Higher Dimensional Automaton can be interpreted as d-paths on the geometric realization of the underlying $\square$--set. Van Glabbeek \cite{vGlabbeek} has shown that many other models for concurrency (eg. Petri nets) can be translated to Higher Dimensional Automata and, therefore, to $\square$--sets.

The problem of calculating of the homotopy types of d-path spaces between two vertices of a $\square$--set was studied in several papers, eg. \cite{R-Simplicial,R-Simplicial2,Z-Perm,Z-VF}. All these results work only for special classes of $\square$--sets, like Euclidean complexes or proper $\square$--sets, i.e., ones that their triangulations are simplicial complexes. In this paper, we consider the general case.

For an arbitrary $\square$--set $K$ with two distinguished vertices $\bO$, $\bI$, we prove that the space of d-paths $\vP(|K|)_\bO^\bI$ with source $\bO$ and target $\bI$ is homotopy equivalent to its subspace $\vP^t(|K|)_\bO^\bI$ of tame d-paths  (Theorem \ref{t:Tame}). A d-path is tame if it can be divided into segments each of which runs from the initial to the final vertex of some cube. In terms of Higher Dimensional Automata, tame d-paths correspond to step executions: at every step, some number of actions is performed, while no other action is active. 
Then we define the cube chain category of $K$, denoted  $\Ch(K)$, and prove that the geometric realization of the nerve of $\Ch(K)$ is homotopy equivalent to the space of tame d-paths on $K$ (Theorem \ref{t:ChN}). This provides a combinatorial model for the execution space of $K$ (Theorem \ref{t:Main}), which can be used for explicit calculations of its homotopy type.  All these constructions are functorial with respect to $K$, regarded as an object in the category of bi-pointed $\square$--sets. This theorems generalize the results of \cite{Z-Perm}.

\subsubsection*{Organization of the paper and relatonship with \cite{Z-Perm}}

The paper consists of two parts. The main goal of the first part (Sections 3--6) is to prove the tamification theorem (Thm. \ref{t:Tame}) and of the second part (Sections 7-11), to prove Theorems \ref{t:ChN} and \ref{t:Main}. The general outline of this paper resembles that of \cite{Z-Perm}  but we need to use more subtle arguments here.

Fix a bi-pointed $\square$--set $K$ with the initial vertex $\bO$ and the final vertex $\bI$. All d-paths $\alpha\in\vP(K)_\bO^\bI$ has integral $L^1$--lengths \cite{R-Trace}, and the spaces of d-paths having length $n$, denoted $\vP(K;n)_\bO^\bI$, will be handled separately for every $n\in\Z_{\geq 0}$.

In Section 3 we define tracks, which are sequences of cubes such that some upper face of preceding cube is a lower face of the succeeding one. Then we prove that every d-path $\alpha\in\vP(K)_\bO^\bI$ is a concatenation of d-paths lying in the consecutive cubes of some track. In Section 4, we define the set of actions that correspond to a given track $\fC$. In Section 5, we introduce progress functions of tracks and investigate the relationship between progress functions of a track $\fC$ and d-paths lying in $\fC$. Then in Section 6, we construct, in a functorial way, a self-map of the space $\vP(K;n)_\bO^\bI$ that is homotopic to the identity map and maps all natural d-paths (i.e., parametrized by length) into tame d-paths. The proof of the latter statement essentially uses progress functions. Since the space of natural d-paths is homotopy equivalent to the space of all d-paths, this implies Theorem \ref{t:Tame}.

This argument is essentially different from the one used in the proof of the tamification theorem in the previous paper \cite[Theorem 5.6]{Z-Perm}. That follows from a similar result for d-simplicial complexes \cite{Z-Cub}, which is proved by constructing some self-deformation of a given d-simplicial complex. This self-deformation is given by a direct but complicated and non-functorial formula and it is not clear how this can be interpreted in terms of concurrent processes. While a general outline of the argument is similar, the tamification via progress functions is more intuitive: a d-path $\alpha$ lying in a track $\fC$ is deformed to a tame d-path by ``speeding up" the executions of the actions of $\fC$. 

In the second part, we introduce cube chains: sequences of cubes in $K$ such that the final vertex of the preceding cube is the initial vertex of the succeeding one. These constitute the special case of tracks. Then, we formulate the main result of the second part of the paper stating that the nerve of the category $\Ch(K)$ of cube chains on $K$ is weakly homotopy equivalent to the space of natural tame paths on $K$ (Theorem \ref{t:ChN}). A natural d-path $\alpha\in\vN(K)_\bO^\bI$ is tame if and only it lies in some cube chain, i.e., admits a presentation of $\alpha$ as a concatenation of d-paths lying in the consecutive cubes of a given cube chain $\bc$. Such a presentation is called a natural tame presentation of $\alpha$; the difficulty that arises here is that $\alpha$ may have many natural tame presentation in $\bc$. This is an essential difference with the situation considered in \cite{Z-Perm}, where there is a good cover of the space of natural tame d-paths $\vN^t(K)_\bO^\bI$ that indexed by the poset of cube chains on $K$, and the analogue of Theorem \ref{t:ChN} can be prove using Nerve Lemma. 

Instead of a good cover, we need to investigate the functor $G:=\vN_{[0,n]}(\square^{{\vee}^{(-)}})_\bO^\bI:\Ch(K)\to \Top$, where $G(\bc)$ is the space of natural tame presentations in $\bc$, equipped with a map $F^K_n:\colim\;F\to \vN_{[0,n]}^t(K)_\bO^\bI$. In Section 8, we investigate properties of natural tame presentations. In Sections 9 and 10, we apply the results of Section 8 to prove that the middle and the right-hand map in the sequence  (\ref{e:MainSequence})
\[
	\lvert \Ch(K)\rvert \longleftarrow \hocolim\; G \longrightarrow \colim\;G \xrightarrow{F^K_n} \vN_{[0,n]}^t(K)_\bO^\bI
\]
are weak homotopy equivalences. This is easy for the left-hand map, so Theorem \ref{t:ChN} follows. Some topological proofs are left to Section 11.

\section{Preliminaries}

In this section we recall definitions and introduce notation that is used later on. See \cite{FGR} or \cite{FGHMR} for a survey which covers most of these topics.

\subsection{d-spaces}
Grandis \cite{Grandis-DHT1} defines a d-space as a pair $(X,\vP(X))$, where $X$ is a topological space and $\vP(X)$ is a family of paths on $X$ that contains all constant paths and is closed with respect to concatenations and non-decreasing reparametrizations. In this paper, it is more convenient to use a slightly different, though equivalent, definition. \emph{A d-space} is a topological space $X$ equipped with a d-structure. \emph{A d-structure} on $X$ is a collection of families of paths $\{\vP_{[a,b]}(X)\}_{a<b\in \R}$,
\[
	\vP_{[a,b]}(X)\subseteq P_{[a,b]}(X):=\map([a,b],X),
\]
called \emph{d-paths}, such that
\begin{itemize}
	\item{every constant path $\const^x_{[a,b]}:[a,b]\ni t\mapsto x\in X$ is a d-path,}
	\item{for every non-decreasing function $f:[a,b]\to [c,d]$ and every d-path $\alpha\in \vP_{[c,d]}(X)$, the path $\alpha \circ f$ is a d-path, i.e., $\alpha \circ f\in \vP_{[a,b]}(X)$,}
	\item{if $a<b<c$ and $\alpha\in \vP_{[a,b]}(X)$, $\beta\in \vP_{[b,c]}(X)$ are d-paths such that $\alpha(b)=\beta(b)$, then \emph{the concatenation} of $\alpha$ and $\beta$}:
	\[
	(\alpha*\beta)(t)
	=\begin{cases}
		\alpha(t) & \text{for $t\in [a,b]$,}\\
		\beta(t) & \text{for $t\in [b,c]$.}
	\end{cases}
\]
is a d-path, i.e., $\alpha*\beta\in\vP_{[a,c]}(X)$.
\end{itemize}
This definition is equivalent to the original Grandis' definition: if $(X,\{\vP_{[a,b]}(X)\}_{a<b})$ is a d-space, then $(X,\vP_{[0,1]}(X))$ is a d-space in the Grandis' sense. If $(X,\vP(X))$ is a d-space in Grandis' sense, then by letting
\[
	\vP_{[a,b]}(X)=\{[a,b]\ni t\mapsto \alpha((t-a)/(b-a))\in X\;|\; \alpha\in \vP(X)\}
\]
we obtain the d-space as defined above. We will occasionally write $\vP(X)$ for $\vP_{[0,1]}(X)$.

The sets $\vP_{[a,b]}(X)$ are topological spaces, with the compact-open topology inherited from the space of all paths $P_{[a,b]}(X)=\map([a,b],X)$.

Given two d-spaces $X,Y$, a continuous map $f:X\to Y$ is \emph{a d-map} if $f\circ \alpha\in \vP_{[a,b]}(Y)$ for every d-path $\alpha\in \vP_{[a,b]}(X)$ and for all $a<b\in \R$. The family of d-spaces with d-maps forms the category $\dTop$, which is complete and cocomplete.

For a d-space $X$ and $x,y\in X$, denote by $\vP_{[a,b]}(X)_x^y\subseteq \vP_{[a,b]}(X)$  the subspace of d-paths $\alpha$ such that $\alpha(a)=x$ and $\alpha(b)=y$.

\emph{A bi-pointed d-space} is a d-space $X$ with two distinguished points $\bO_X,\bI_X\in X$, the initial one $\bO_X$ and the final one $\bI_X$.  A bi-pointed d-map is a d-map that preserves the initial and the final points. The category of bi-pointed d-spaces and bi-pointed maps will be denoted by $\dTop\bip$.

\subsection{Directed intervals and cubes}
Let $s<t\in\R$. \emph{The directed interval} is the d-space $\overrightarrow{[s,t]}$ such that $\vP_{[a,b]}(\overrightarrow{[s,t]})$ is the space of non-decreasing continuous functions $[a,b]\to [s,t]$. The d-space $\vI=\overrightarrow{[0,1]}$ will be called \emph{the directed unit interval}.

\emph{The directed $n$-cube} $\vI^n$ is the categorical product of $n$ copies of the directed unit interval. A path on $\vI^n$ is a d-path of all its coordinates are d-paths in $\vI$, i.e., they are non-decreasing functions.
Points of $\vI^n$ will be denoted by bold letters, if possible, and their coordinates are distinguished by the upper indices, so that, for example $\bx=(x^1,x^2,\dots,x^n)\in\vI^n$. A similar convention will be used for d-paths: for $\beta\in\vP(\vI^n)$, $\beta^i\in \vP(\vI)$ denotes the $i$--th coordinate of $\beta$. We will write $|\bx|$ for $\sum_{i=1}^n x^i$.

Whenever $\vI$ or $\vI^n$ are considered as bi-pointed d-spaces, their initial and final points are $0,1\in \vI$ and
$
	\bO=(0,\dots,0), \bI=(1,\dots,1)\in\vI^n,
$
respectively.

\subsection{Quotient d-spaces}
Let $X,Y$ be topological spaces and let $p:X\to Y$ be a quotient map. Assume that $X$ is equipped with a d-structure $\{\vP_{[a,b]}(X)\}_{a<b\in\R}$. \emph{The quotient d-structure} on the space $Y$ is defined in the following way: a path $\alpha\in P_{[a,b]}(Y)$ is a d-path if and only if there exist numbers $a=t_0<\dots<t_n=b$ and d-paths $\beta_i\in \vP_{[t_{i-1},t_i]}(X)$ such that $\alpha(t)=p(\beta_i(t))$ for $t\in[t_{i-1},t_i]$. The quotient d-structure is the smallest d-structure on $Y$ such that $p$ is a d-map. The space $Y$ with the quotient d-structure will be called \emph{the quotient d-space} of $X$.

\subsection{$\square$--sets}
\emph{A pre-cubical set}, or \emph{a $\square$--set} $K$ is a sequence of disjoint sets $(K[n])_{n\geq 0}$ with a collection of \emph{face maps} $(d^\varepsilon_{i}:K[n]\to K[n-1])$ for $n>0$, $\varepsilon\in\{0,1\}$, $i\in \{1,\dots,n\}$ such that $d_i^\varepsilon d_j^\eta = d_{j-1}^\eta d_i^\varepsilon$ for all $\varepsilon,\eta\in\{0,1\}$ and $i<j$. Elements of the sets $K[n]$ will be called \emph{$n$--cubes} or just \emph{cubes} and $0$--cubes will be called \emph{vertices}. \emph{The dimension} of a cube $c$ as the integer $\dim(c)$ such that $c\in K[\dim(c)]$.
For $\square$--sets $K$, $L$, \emph{a $\square$--map} $f:K\to L$ is a sequence of maps $f[n]:K[n]\to L[n]$ that commute with the face maps. The category of $\square$--sets and $\square$--maps will be denoted by $\square\Set$.

An example of a $\square$--set is \emph{the standard $n$--cube} $\square^n$, such that $\square^n[k]$ is the set of functions $\{1,\dots,n\}\to\{0,1,*\}$ that take value $*$ for exactly $k$ arguments. The face map $d^\varepsilon_i$ converts the $i$--th occurrence of $*$ into $\varepsilon$. The only element of $\square^n[n]$ will be denoted by $u_n$. 
For any $\square$--set $K$, there is a 1--1 correspondence between the set of $n$--cubes $K[n]$ and the set of $\square$--maps $\square^n\to K$: for every $c\in K[n]$ there exists a unique map $f_c:\square^n\to K$ such that $f_c(u_n)=c$.

\emph{The $k$--th skeleton} of $\square$--set $K$ is a sub--$\square$--set $K_{(k)}\subseteq K$ given by 
\begin{equation}
	K_{(k)}[n] = \begin{cases}
		K[n] & \text{for $n\leq k$,}\\
		\emptyset & \text{for $n>k$.}
	\end{cases}
\end{equation}
\emph{The boundary of the standard $n$--cube} is $\partial\square^n:=\square^n_{(n-1)}$.

Given a $\square$--set $K$, a subset $A=\{a_1<\dots<a_k\}\subseteq \{1,\dots,n\}$ and $\varepsilon\in\{0,1\}$, define \emph{the iterated face map}
\begin{equation}\label{e:ExtraD}
	d^\varepsilon_A:=d^\varepsilon_{a_1}\circ d^\varepsilon_{a_{2}}\circ \dots \circ d^\varepsilon_{a_k}:K[n] \to K[n-k].
\end{equation}
Denote $d^\varepsilon:=d^\varepsilon_{\{1,\dots,n\}}:K[n]\to K[0]$. For a cube $c\in K[n]$, $d^0(c)$ and $d^1(c)$ will be called \emph{the initial} and \emph{the final vertex} of $c$, respectively.

\emph{A bi-pointed $\square$--set} is a triple $(K,\bO_K,\bI_k)$, where $K$ is a $\square$--set and $\bO_K,\bI_K\in K[0]$ are its vertices; we will write $\bO$ and $\bI$ for $\bO_K$ and $\bI_K$ whenever it does not lead to confusion. The category of bi-pointed $\square$--sets and base-points-preserving $\square$--maps will be denoted by $\square\Set\bip$.

\subsection{Geometric realization}
Let $K$ be a $\square$--set. \emph{The geometric realization} of $K$ is the quotient d-space
\begin{equation}
	|K|=\left(\coprod_{n\geq 0} K[n]\times \vI^n\right)/\sim.
\end{equation}
The relation $\sim$ is generated by $(c,\delta^\varepsilon_i(\bx))\sim (d^\varepsilon_i(c),\bx)$ for all $n\geq 1$, $i\in\{1,\dots,n\}$, $\varepsilon\in\{0,1\}$, $c\in K[n]$ and $\bx\in \vI^{n-1}$,  where
\[
	\delta^\varepsilon_i:\vI^{n-1}\ni (x^1,\dots,x^{n-1})\mapsto (x^1,\dots,x^{i-1},\varepsilon,x^i,\dots,x^{n-1})\in \vI^n
\]
is \emph{the coface map}.
For $c\in K[n]$ and $\bx=(x^1,\dots,x^{n})\in \vI^n$, $[c;\bx]\in|K|$ denotes the point represented by $(c,\bx)$. Every point $p\in|K|$ admits a unique \emph{canonical presentation} $p=[c_p;\bx_p]$ such that $x_p^j\neq 0,1$ for all $j$. The cube $c_p$ will be called \emph{the carrier} of $p$.
Each presentation of $p$ has the form
\[
	p=[c', \delta^{\varepsilon_1}_{i_1}(\dots(\delta^{\varepsilon_r}_{i_r}(\bx_p))\dots)],
\]
where $c'\in K[\dim(c_p)+r]$ is a cube such that $d^{\varepsilon_1}_{i_1}(\dots(d^{\varepsilon_r}_{i_r}(c'))\dots)=c_p$.

For $A=\{a_1<\dots<a_k\}\subseteq \{1,\dots,n\}$ and $\varepsilon\in\{0,1\}$, \emph{the iterated coface map} is defined by the formula
\begin{equation}\label{e:DeltaMaps}
	\delta^\varepsilon_A:= \delta^\varepsilon_{a_{k}}\circ \delta^\varepsilon_{a_{k-1}}\circ \dots\circ \delta^\varepsilon_{a_1}: \vI^{n-k}\to \vI^n
\end{equation}
Notice that if 
\[
	\bar{A}=\{\bar{a}_1<\dots<\bar{a}_{n-k}\}=\{1,\dots,n\}\setminus A,
\]
and $\bx=(x^1,\dots,x^{n-k})\in\vI^{n-k}$, then
\begin{equation}\label{e:DeltaCoords}
	\delta^\varepsilon_A(\bx)^i=\begin{cases}
		\varepsilon & \text{for $i\in A$,}\\
		x^{j} & \text{for $i=\bar{a}_j\in \bar{A}$.}
	\end{cases}
\end{equation}
Also, we have $[d^\varepsilon_A(c);\bx]=[c;\delta^\varepsilon_A(\bx)]$ for all $c$, $\bx$ and $A$.

Every $\square$--map $f:K\to L$ induces the d-map
\begin{equation}
	|f|:|K|\ni [c;\bx] \mapsto [f(c);\bx]\in  |L|.
\end{equation}
Thus, the geometric realization defines the functors $|-|:\square\Set \to \dTop$ and  $|-|:\square\Set\bip \to \dTop\bip$.

We will usually skip the vertical bars and write $\vP(K)$ for $\vP(|K|)$.

\subsection{Presentations of d-paths}
Let $K$ be a $\square$--set. \emph{A presentation} of a d-path $\alpha\in \vP_{[a,b]}(\lvert K\rvert)$ consists of
\begin{itemize}
	\item{a sequence $(c_i)_{i=1}^l$ of cubes of $K$, and}
	\item{a sequence $(\beta_i\in \vP_{[t_{i-1},t_i]}(\vI^{\dim(c_i)}))_{i=1}^l$ of d-paths,}
\end{itemize}
such that
\begin{itemize}
	\item{ $a=t_0\leq t_1\leq \dots\leq t_{l-1}\leq t_l=b$, and}
	\item{ $\alpha(t)=[c_i;\beta_i(t)]$ for every $i\in \{1,\dots,l\}$ and $t\in[t_{i-1},t_i]$.}
\end{itemize}
We write such a presentation as
\begin{equation}\label{e:dPathPresentation}
	\alpha=\overset{t_0}{\phantom{*}}[c_1;\beta_1]\overset{t_1}{*}[c_2;\beta_2]\overset{t_2}{*}\dots\overset{t_{l-1}}{*} [c_l;\beta_l]\overset{t_l}{\phantom{*}}.
\end{equation}

Immediately from the definition of a quotient d-space follows that every d-path in $|K|$ admits a presentation.

%\begin{rem}
%	We do not assume that $c_{i}\neq c_{i+1}$ for all $i$.
%\end{rem}

\subsection{Length}
Let $K$ be a $\square$--set. \emph{The $L^1$--length}, or just \emph{the length} of a d-path $\alpha\in\vP_{[a,b]}(K)$ is defined as
\begin{equation}
	\len(\alpha)=\sum_{i=1}^l |\beta_i(t_i)|-|\beta_i(t_{i-1})|=\sum_{i=1}^l \sum_{j=1}^{\dim(c_i)} \beta_i^j(t_i)-\beta_i^j(t_{i-1}),
\end{equation}
for some presentation (\ref{e:dPathPresentation}) of $\alpha$. The length was introduced by Raussen \cite[Section 2]{R-Trace}. This definition does not depend on the choice of a presentation, and defines, for every $a<b\in\R$, a continuous function $\len:\vP(K)\to \R_{\geq 0}$ \cite[Proposition 2.7]{R-Trace}.
If $x,y\in |K|$ and d-paths $\alpha,\alpha'\in \vP_{[a,b]}(K)_x^y$ are d-homotopic (i.e., they lie in the same path-connected component of $\vP_{[a,b]}(K)_x^y$) then $\len(\alpha)=\len(\alpha')$. Furthermore, if $K$ is a bi-pointed $\square$--set, then the length of every d-path $\alpha\in \vP_{[a,b]}(K)_\bO^\bI$ is an integer. As a consequence, there is a decomposition
\begin{equation}\label{e:LengthDecomposition}
	\vP_{[a,b]}(K)_\bO^\bI \cong \coprod_{n\geq 0} \vP_{[a,b]}(K;n)_\bO^\bI,
\end{equation}
where $\vP_{[a,b]}(K;n)_\bO^\bI$ stands for the space of d-paths having length $n$.

\subsection{Naturalization}\label{ss:Nat}
We say that a d-path $\alpha\in\vP_{[a,b]}(K)$ is \emph{natural} if $\len(\alpha|_{[c,d]})=d-c$ for every $a\leq c\leq d\leq b$. Let $\vN_{[a,b]}(K)\subseteq \vP_{[a,b]}(K)$ denote the subspace of natural d-paths.

Natural d-paths were introduced and studied by Raussen \cite{R-Trace}. He proved that for every d-path $\alpha\in \vP_{[a,b]}(K)$ there exists a unique natural d-path $\nat(\alpha)\in \vN_{[0,\len(\alpha)]}(K)$ such that
\begin{equation}
	\alpha(t)=\nat(\alpha)(\len(\alpha|_{[a,t]})).
\end{equation}
Furthermore, for a bi-pointed $\square$--set $K$, the map
\begin{equation}\label{e:natKn}
	\nat^K_n: \vP_{[0,n]}(K;n)_\bO^\bI\ni \alpha \mapsto \nat(\alpha)\in \vN_{[0,n]}(K)_\bO^\bI
\end{equation}
is a homotopy inverse of the inclusion map \cite[Propositions 2.15 and 2.16]{R-Trace}. The map $\nat^K_n$ is functorial with respect to $K\in\square\Set\bip$.

\subsection{Tame paths}
A d-path $\alpha\in\vP_{[a,b]}(K)_\bO^\bI$ is \emph{tame} if it admits a presentation (\ref{e:dPathPresentation}) such that, for every $i\in\{1,\dots,l-1\}$, $\alpha(t_i)$ is a vertex, i.e., has the form $[v;()]$ for some $v\in K[0]$. This definition generalizes the earlier definitions for d-paths on d-simplicial complexes \cite{Z-Cub} and on proper $\square$--sets \cite{Z-Perm}.

 If $\alpha$ is tame, then one can impose even a stronger condition on its presentation. For every $i$, all coordinates of $\beta_i(t_i)$ are either 0 or 1; if $\beta_i^j(t_i)=0$, then $\beta_i^j(t)=0$ for all $t\in [t_{i-1},t_i]$. Hence the segment $[c_i,\beta_i]$ may be replaced by $[d^0_j(c_i), \beta'_i]$, where $\beta'_i$ is the path obtained from $\beta_i$ by skipping its $j$--th coordinate. By repeating this operation, we obtain a presentation such that $\beta_i(t_i)=(1,\dots,1)$ for all $i$. In a similar way, we can guarantee that $\beta_i(t_{i-1})=(0,\dots,0)$.
A presentation (\ref{e:dPathPresentation}) such that $\beta_i(t_{i-1})=\bO$ and $\beta_i(t_i)=\bI$ for all $i$ will be called \emph{a tame presentation} of $\alpha$.

Let $\vP_{[a,b]}^t(K)_\bO^\bI$ (resp. $\vN^t_{[a,b]}(K)_\bO^\bI$) denote the space of all tame (resp. natural tame) d-paths on $K$ from $\bO$ to $\bI$.

\section{Tracks}

Let $K$ be a bi-pointed $\square$--set. Every d-path $\alpha\in\vP(K)_\bO^\bI$ is a concatenation of d-paths having the form $[c;\beta]$, where $c$ is a cube of $K$ and $\beta$ is a d-path in $\vI^{\dim(c)}$. We will show that we can impose extra conditions on the cubes and the paths which appear in such a presentation.

\begin{df}\label{d:Track}
	\emph{A track} $\fC$ in $K$ is a sequence of triples $(c_i, A_i, B_i)_{i=1}^l$, where
\begin{itemize}
	\item{$c_1,\dots,c_l$ are cubes of $K$,}
	\item{$A_i,B_i\subseteq \{1,\dots,\dim(c_i)\}$}
\end{itemize}
such that
\begin{enumerate}[\normalfont (a)]
	\item{$d^0_{A_1}(c_1)=\bO_K$,}
	\item{$d^1_{A_l}(c_l)=\bI_K$,}
	\item{$d^1_{B_i}(c_i)=d^0_{A_{i+1}}(c_{i+1})$ for every $i\in\{1,\dots,l-1\}$.}
	\item{the sets $A_1$, $B_l$ and $B_{i}\cup A_{i+1}$ are non-empty. \label{i:Nonempty}}
\end{enumerate}
\end{df}

\begin{prp}
	For every track $\fC=(c_i,A_i,B_i)_{i=1}^l$ we have $\sum_{i=1}^l |A_i|=\sum_{i=1}^l |B_i|$.
\end{prp}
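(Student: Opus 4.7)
The plan is a one-step dimension count based on the fact that an iterated face map $d^\varepsilon_A : K[n] \to K[n-|A|]$ drops cube dimension by exactly $|A|$. Under this single observation, the proposition reduces to a telescoping identity.

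First I would extract, from conditions (a) and (b), the two boundary identities $|A_1| = \dim(c_1)$ and $|B_l| = \dim(c_l)$, each a consequence of the fact that $\bO_K$ and $\bI_K$ are $0$-dimensional. Next, condition (c) equates two cubes, forcing their dimensions to agree:
\[
  \dim(c_i) - |B_i| = \dim(c_{i+1}) - |A_{i+1}|, \qquad i = 1, \dots, l-1,
\]
which rearranges to $|A_{i+1}| - |B_i| = \dim(c_{i+1}) - \dim(c_i)$.

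Summing this identity over $i = 1, \dots, l-1$ telescopes the right-hand side to $\dim(c_l) - \dim(c_1)$, giving
\[
  \sum_{i=2}^l |A_i| - \sum_{i=1}^{l-1} |B_i| = \dim(c_l) - \dim(c_1).
\]
Adding the boundary identity $|A_1| = \dim(c_1)$ to the left and transposing $|B_l| = \dim(c_l)$ from the right, both sides collapse to the claim $\sum_{i=1}^l |A_i| = \sum_{i=1}^l |B_i|$.

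There is no real obstacle here; the whole argument is pure arithmetic bookkeeping on cube dimensions along the track. The only point that deserves even mild attention is the endpoint accounting, and that is settled immediately by the vertex conditions (a) and (b). The non-emptiness hypothesis \ref{i:Nonempty} is not used in this proposition, which is consistent with its purely dimensional character.
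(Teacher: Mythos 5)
Your argument is correct and is essentially the same as the paper's: both derive $|A_1|=\dim(c_1)$, $|B_l|=\dim(c_l)$, and $\dim(c_{i+1})=\dim(c_i)-|B_i|+|A_{i+1}|$ from conditions (a)--(c) and then telescope. The only cosmetic difference is that you sum the difference identities directly while the paper unfolds the recursion for $\dim(c_l)$; the content is identical.
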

\begin{proof}
	Conditions (a), (b) and (c) imply that $\dim(c_1)=|A_1|$, $\dim(c_{i+1})=\dim(c_i)-\lvert B_i\rvert +|A_{i+1}|$ and $\dim(c_l)=|B_{l}|$. We have
	\[
		|B_l|=\dim(c_l)=\dim(c_1)+\sum_{i=2}^{l} \lvert A_i\rvert - \sum_{i=1}^{l-1} |B_i|=\sum_{i=1}^{l}|A_i| - \sum_{i=1}^{l-1} |B_i|.\qedhere
	\]
\end{proof}
The integer $\sum |A_i|=\sum |B_i|$ will be called \emph{the length} of the track $\fC$ and denoted $\len(\fC)$.

\begin{df}\label{d:LiesInTrack}
Let $\fC=(c_i,A_i,B_i)_{i=1}^l$ be a track in $K$ and let $\alpha\in\vP_{[a,b]}(K)_\bO^\bI$. Denote $e_i=d^1_{B_i}(c_i)=d^0_{A_{i+1}}(c_{i+1})$. A presentation
\[
	\alpha=\overset{t_0}{\phantom{*}}[c_1;\beta_1]\overset{t_1}{*}[c_2;\beta_2]\overset{t_2}{*}\dots\overset{t_{l-1}}{*} [c_l;\beta_l]\overset{t_l}{\phantom{*}}
\]
is \emph{a $\fC$--presentation} of $\alpha$ is there exist points $\bx_i\in \vI^{\dim(e_i)}$, $i\in \{1,\dots,l-1\}$ such that for all $i$:
\begin{enumerate}
	\item{$\beta_i(t_i)=\delta^1_{B_i}(\bx_i)$,}
	\item{$\beta_{i+1}(t_i)=\delta^0_{A_{i+1}}(\bx_i)$.}
\end{enumerate}
We say that $\alpha$ \emph{lies in $\fC$} if it admits a $\fC$--presentation. The space of d-paths lying in $\fC$ will be denoted by $\vP_{[a,b]}(K,\fC)$.
\end{df}

\begin{prp}	
	If $\alpha\in\vP_{[a,b]}(K;\fC)$, then $\len(\alpha)=\len(\fC)$.
\end{prp}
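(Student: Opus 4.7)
The plan is to apply the length formula term-by-term to a $\fC$-presentation of $\alpha$, observe that the inner $|\bx_i|$ contributions telescope, and read off $\sum |B_i|$ as the surviving sum.

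First I would record a simple coordinate computation that follows from (\ref{e:DeltaCoords}): for any $B\subseteq\{1,\dots,n\}$, any $\varepsilon\in\{0,1\}$ and any $\bx\in\vI^{n-|B|}$,
\[
	\lvert\delta^\varepsilon_B(\bx)\rvert = \varepsilon\cdot|B| + |\bx|.
\]
Next I would take a $\fC$-presentation $\alpha=[c_1;\beta_1]*\cdots*[c_l;\beta_l]$ with witnessing points $\bx_1,\dots,\bx_{l-1}$ and apply this identity to the two defining equations $\beta_i(t_i)=\delta^1_{B_i}(\bx_i)$ and $\beta_{i+1}(t_i)=\delta^0_{A_{i+1}}(\bx_i)$. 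These give, for $1\le i\le l-1$,
\[
	|\beta_i(t_i)|=|B_i|+|\bx_i|,\qquad |\beta_{i+1}(t_i)|=|\bx_i|.
\]

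For the two boundary terms $\beta_1(t_0)$ and $\beta_l(t_l)$, I would use conditions (a) and (b) of Definition \ref{d:Track} together with the dimension count established in the previous proposition: $\dim(c_1)=|A_1|$ and $\dim(c_l)=|B_l|$ force $A_1=\{1,\dots,\dim(c_1)\}$ and $B_l=\{1,\dots,\dim(c_l)\}$. Since $\alpha(t_0)=\bO$ and $\alpha(t_l)=\bI$ are $0$-dimensional, the only admissible values of $\beta_1(t_0)$ and $\beta_l(t_l)$ inside their respective cubes are $(0,\dots,0)$ and $(1,\dots,1)$, giving $|\beta_1(t_0)|=0$ and $|\beta_l(t_l)|=|B_l|$. (Equivalently, set $\bx_0$ and $\bx_l$ to be the empty tuple and view these as the edge cases of the two displayed identities.)

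Plugging everything into the length formula,
\[
	\len(\alpha)=\sum_{i=1}^l \bigl(|\beta_i(t_i)|-|\beta_i(t_{i-1})|\bigr)
	=\sum_{i=1}^l |B_i|+\Bigl(|\bx_1|+\sum_{i=2}^{l-1}(|\bx_i|-|\bx_{i-1}|)-|\bx_{l-1}|\Bigr),
\]
the parenthesised sum telescopes to $0$, leaving $\len(\alpha)=\sum_{i=1}^l|B_i|=\len(\fC)$. There is no real obstacle here — it is pure bookkeeping — and the only point deserving care is the uniform treatment of the two endpoints, which the dimension identities from the preceding proposition handle cleanly.
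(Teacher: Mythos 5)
Your proof is correct and follows essentially the same route as the paper: choose a $\fC$-presentation, compute $|\beta_i(t_i)|-|\beta_i(t_{i-1})|=|B_i|+|\bx_i|-|\bx_{i-1}|$ term by term (with the convention $|\bx_0|=|\bx_l|=0$), and sum so that the $|\bx_i|$ terms telescope to leave $\sum|B_i|=\len(\fC)$. Your explicit justification of the endpoint values via $A_1=\{1,\dots,\dim(c_1)\}$ and $B_l=\{1,\dots,\dim(c_l)\}$ is slightly more verbose than the paper's bare convention but amounts to the same bookkeeping.
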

\begin{proof}
	Choose a $\fC$--presentation of $\alpha$, as in Definition \ref{d:LiesInTrack}. For every $i\in\{1,\dots,l\}$ we have 
	\[
		\len(\alpha|_{[t_{i-1},t_i]})=\len(\beta_i)=|\beta_i(t_i)|-|\beta_i(t_{i-1})|=|B_i|+|\bx_i| - |\bx_{i-1}|,
	\]
	when assuming $|\bx_0|=|\bx_l|=0$. Thus,
	\[
		\len(\alpha)=\sum_{i=1}^l \len(\alpha|_{[t_{i-1},t_i]})=\sum_{i=1}^l |B_i|=\len(\fC).\qedhere
	\]
\end{proof}

Fajstrup proved \cite[2.20]{F-Dipaths} that if $K$ is a geometric $\square$--set, then every d-path $\alpha\in\vP(K)_\bO^\bI$ lies in a track called the carrier sequence of $\alpha$. Below we prove an analogue of these result for arbitrary $\square$--sets.

\begin{prp}
	Every non-constant d-path $\alpha\in\vP_{[a,b]}(K)_\bO^\bI$ lies in some track.
\end{prp}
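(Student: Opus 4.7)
The plan is to start from any presentation
\[
\alpha=\overset{t_0}{\phantom{*}}[c_1;\beta_1]\overset{t_1}{*}\cdots\overset{t_{l-1}}{*}[c_l;\beta_l]\overset{t_l}{\phantom{*}}
\]
of $\alpha$ (which exists by the quotient d-structure on $|K|$) and to refine it, via successive local moves, into a $\fC$-presentation of a suitable track $\fC$. I would use three reduction moves, applied repeatedly until none remains: \emph{(i) pruning of constant coordinates}: if some $\beta_i^j$ is identically $0$ or identically $1$ on $[t_{i-1},t_i]$, replace $c_i$ by $d^\varepsilon_j(c_i)$ and drop the $j$-th coordinate of $\beta_i$, using the generating relation $(c,\delta^\varepsilon_j(\bx))\sim(d^\varepsilon_j(c),\bx)$; \emph{(ii) absorbing a trivial segment}: if pruning leaves some $c_i$ as a vertex, then $\alpha$ is constant on $[t_{i-1},t_i]$, and we merge this sub-interval into an adjacent segment by constantly extending the neighbour's d-path; \emph{(iii) merging equal cubes}: if $c_i=c_{i+1}$, uniqueness of the representation of $\alpha(t_i)$ by a fixed cube forces $\beta_i(t_i)=\beta_{i+1}(t_i)$, so the two d-paths concatenate into one. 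Since each move strictly decreases the non-negative integer $l+\sum_i\dim(c_i)$, the procedure terminates, producing a presentation with $\dim(c_i)\ge 1$ for each $i$, no coordinate of any $\beta_i$ identically $0$ or $1$, and no two adjacent cubes coincident.

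Next, I would set $A_i:=\{j:\beta_i^j(t_{i-1})=0\}$ and $B_i:=\{j:\beta_i^j(t_i)=1\}$. After pruning, the coordinates of $\beta_i(t_i)$ not in $B_i$ all lie in $(0,1)$, so $\beta_i(t_i)=\delta^1_{B_i}(\bx_i)$ where $\bx_i$ is the canonical coordinate tuple of $\alpha(t_i)$; symmetrically $\beta_{i+1}(t_i)=\delta^0_{A_{i+1}}(\bx_i)$. Uniqueness of the canonical presentation yields $d^1_{B_i}(c_i)=d^0_{A_{i+1}}(c_{i+1})$, which is condition~(c) of Definition~\ref{d:Track}. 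Conditions~(a) and~(b) follow identically at the endpoints $t_0$ and $t_l$, since $\alpha(t_0)=\bO_K$ and $\alpha(t_l)=\bI_K$ are vertices and pruning forces $\beta_1(t_0)=\bO$ and $\beta_l(t_l)=\bI$. For condition~(\ref{i:Nonempty}): $A_1$ and $B_l$ are non-empty because $\dim(c_1),\dim(c_l)\ge 1$ by~(ii), and $B_i\cup A_{i+1}\neq\emptyset$ because $B_i=A_{i+1}=\emptyset$ would force $c_i=d^1_\emptyset(c_i)=d^0_\emptyset(c_{i+1})=c_{i+1}$, ruled out by~(iii). The data $(\bx_i)_{i=1}^{l-1}$ then constitute a $\fC$-presentation of $\alpha$.

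The main point I expect to be delicate is verifying that the three moves interact cleanly upon iteration. In particular, absorbing a trivial segment in~(ii) must not introduce new identically-$0$-or-$1$ coordinates on the enlarged segment; but such a coordinate would need to be constant at $0$ or $1$ on one of the two sub-intervals, contradicting the prior application of~(i). A minor related observation at the absorption step: pruning forces $\beta_{i-1}^j(t_{i-1})\neq 0$ for every $j$, so when the absorbed constant value is a vertex it must equal $d^1(c_{i-1})$, and the constant extension by $\bI\in\vI^{\dim(c_{i-1})}$ is automatically a valid non-decreasing extension. Analogous preservation checks handle~(iii), and the exclusion $\alpha\neq\const$ rules out the degenerate case where every segment ends up trivial.
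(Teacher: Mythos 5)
Your overall strategy (iteratively reduce a presentation, then read off the track) is the same as the paper's, which achieves the same end by a one-shot minimality argument: choose a presentation minimizing $l+\sum_i\dim(c_i)$ and deduce the structural properties from minimality. These are essentially equivalent. However, your justification for move~(iii) contains a genuine error, and the error propagates into your verification of condition~(\ref{i:Nonempty}).

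The claim ``uniqueness of the representation of $\alpha(t_i)$ by a fixed cube forces $\beta_i(t_i)=\beta_{i+1}(t_i)$'' is false for general $\square$--sets: the map $\vI^{\dim c}\ni\bx\mapsto[c;\bx]\in|K|$ need not be injective (faces of a cube can be identified), so $[c;\bx]=[c;\by]$ does not imply $\bx=\by$ even for a fixed $c$. Consequently move~(iii) is not always executable when $c_i=c_{i+1}$: it may be that $\beta_i(t_i)\neq\beta_{i+1}(t_i)$, and then the two segments cannot be concatenated. Your reduction procedure can therefore terminate with $c_i=c_{i+1}$ still present, and the argument ``$B_i\cup A_{i+1}\neq\emptyset$ because $c_i=c_{i+1}$ was ruled out by~(iii)'' no longer applies. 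The correct reasoning, which the paper uses, runs in the opposite direction: suppose $B_i\cup A_{i+1}=\emptyset$ in a terminal presentation; then, after pruning, every coordinate of $\beta_i(t_i)$ and of $\beta_{i+1}(t_i)$ lies strictly in $(0,1)$, so $[c_i;\beta_i(t_i)]$ and $[c_{i+1};\beta_{i+1}(t_i)]$ are \emph{both canonical} presentations of $\alpha(t_i)$ and hence equal; only then do you get $c_i=c_{i+1}$ \emph{and} $\beta_i(t_i)=\beta_{i+1}(t_i)$, and only then can you merge, contradicting terminality. So the merge step should be triggered by ``both endpoints are canonical,'' not by ``$c_i=c_{i+1}$,'' and the uniqueness you invoke is uniqueness of the canonical presentation, not uniqueness of the presentation over a fixed cube. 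With this repair the rest of your argument goes through.
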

\begin{proof}
	Choose a presentation
\[
	\alpha=\overset{t_0}{\phantom{*}}[c_1;\beta_1]\overset{t_1}{*}[c_2;\beta_2]\overset{t_2}{*}\dots\overset{t_{l-1}}{*} [c_l;\beta_l]\overset{t_l}{\phantom{*}}.	
\]	
	such that the integer $l+\sum_{i=1}^l \dim(c_i)$ is minimal among all presentations of $\alpha$. Then:
	\begin{enumerate}
		\item{
			$\beta_i^j(t_i)>0$ for all $i\in\{1,\dots,l\}$, $j\in\{1,\dots,\dim(c_i)\}$. Otherwise, there exist $i,j$ such that $\beta^j_i(t_i)=0$, which implies that $\beta_i^j(t)=0$ for all $t\in[t_{i-1},t_i]$. Hence, the segment $[c_i,\beta_i]$ may be replaced by
\[
	[d^0_j(c_i), (\beta_i^1,\dots,\beta_i^{j-1},\beta_i^{j+1},\dots,\beta_i^{\dim(c_i)})],
\]		
		 which contradicts the assumption that $l+\sum \dim(c_i)$ is minimal.
		 }
		 \item{
		 	$\beta_i^j(t_{i-1})<1$ for all $i\in\{1,\dots,l\}$, $j\in\{1,\dots,\dim(c_i)\}$; the argument is similar.
		 }
		 \item{
		 	For every $i\in\{1,\dots,l-1\}$, there exists $j\in\{1,\dots,\dim(c_i)\}$ such that $\beta_i^j(t_i)=1$, or there exists $k\in\{1,\dots,\dim(c_{i+1})\}$ such that $\beta_{i+1}^j(t_i)=0$. Assume otherwise; since $0<\beta_i^j(t_i), \beta_{i+1}^k(t_i)<1$ for all $j,k$, $[c_i,\beta_i(t_i)]$ and $[c_{i+1},\beta_{i+1}(t_i)]$ are both canonical presentations of the point $\alpha(t_i)$. Thus, $c_i=c_{i+1}$, $\beta_i(t_i)=\beta_{i+1}(t_{i})$ and the two segments $[c_i;\beta_i]$ and $[c_{i+1};\beta_{i+1}]$ may be replaced by the single segment $[c_i; \beta_i*\beta_{i+1}]$.
		 }
		 \item{$\dim(c_1),\dim(c_l)>0$. Otherwise the segment $[c_1;\beta_1]$ (resp. $[c_l,\beta_l]$) could be merged with $[c_2;\beta_2]$ (resp. $[c_{l-1},\beta_{l-1}]$), which exists since $\alpha$ is not constant.}
	\end{enumerate}
	Let
	\begin{align*}
		A_i&=\{j\in\{1,\dots,\dim(c_i)\}\;|\; \beta_i^j(t_{i-1})=0\}\\
		B_i&=\{j\in\{1,\dots,\dim(c_i)\}\;|\; \beta_i^j(t_{i})=1\}.
	\end{align*}
We will check that $\fC=(c_i,A_i,B_i)_{i=1}^l$ is a track. We have $\bO_K=[c_1;\beta_1(a)]$ and $\beta_i^j(a)<1$ for all $j$; thus, $\beta_i^j(a)=0$ (since $\alpha(t_0)=\bO_K$ is a vertex). Therefore, $A_1=\{1,\dots,\dim(c_1)\}$ and $d^0_{A_1}(c_1)=\bO_K$. Similarly, $B_l=\{1,\dots,\dim(c_l)\}$ and $d^1_{B_l}(c_l)=\bI_K$. Thus, conditions (a) and (b) of Definition \ref{d:Track} are satisfied.  Condition \ref{d:Track}.(d) follows from (3). From (1) and (2) follows that, for every $i$, there exist:
\begin{itemize}
	\item{a unique $\bx_i\in \vI^{\dim(c_i)-|B_i|}$ such that $\delta^1_{B_i}(\bx_i)=\beta_i(t_i)$, and}
	\item{a unique $\by_i\in \vI^{\dim(c_{i+1})-|A_{i+1}|}$ such that $\delta^0_{A_{i+1}}(\by_i)=\beta_{i+1}(t_i)$.}
\end{itemize}
We have
		\begin{align*}
			\alpha(t_i)&=[c_i;\beta_i(t_i)]=[c_i;\delta^1_{B_i}(\bx_i)]=[d^1_{B_i}(c_i);\bx_i]\\
			\alpha(t_i)&=[c_{i+1};\beta_{i+1}(t_i)]=[c_{i+1};\delta^0_{A_{i+1}}(\by_i)]=[d^0_{A_{i+1}}(c_{i+1});\by_i].
		\end{align*}
	All coordinates of both $\bx_i$ and $\by_i$ are different than either 0 or 1. Thus, $[d^1_{B_i}(c_i);\bx_i]$ and $[d^0_{A_{i+1}}(c_{i+1});\by_i]$ are both canonical presentations of the same point and, therefore, they are equal. As a consequence, $d^1_{B_i}(c_i)=d^0_{A_{i+1}}(c_{i+1})$, which proves that \ref{d:Track}.(c) is satisfied. Thus, $\fC$ is a track. Moreover, the points $\bx_i=\by_i$ fit into the Definition \ref{d:LiesInTrack}; hence, $\alpha$ lies in $\fC$.
\end{proof}

\section{Actions}

Every d-path $\alpha$ between vertices of a $\square$--set $K$ having length $n$ can be interpreted as a performance of $n$ different actions. This is an easy observation if $K$ is a Euclidean complex in the sense of \cite{RZ}. In this section we will show how to interpret  actions when $K$ is an arbitrary $\square$--set.

Fix a bi-pointed $\square$--set $K$ and a track $\fC=(c_i,A_i,B_i)_{i=1}^l$ in $K$ having length $n$. For $i\in\{1,\dots,l\}$, denote
\begin{equation*}
	 q_i=\dim(c_i)-|B_i|=\dim(c_{i+1})-|A_{i+1}|,
\end{equation*}
\begin{align}\label{e:BarAB}
	\bar{A}_i&=\{\bar{a}_i^1<\bar{a}_i^2<\dots<\bar{a}_i^{q_{i-1}}\}=\{1,\dots,\dim(c_i)\}\setminus A_i,\\
	\bar{B}_i&=\{\bar{b}_i^1<\bar{b}_i^2<\dots<\bar{b}_i^{q_i}\}=\{1,\dots,\dim(c_i)\}\setminus B_i.\notag
\end{align}

Consider the set of pairs $(i,r)$ such that $i\in \{1,\dots,k\}$, $r\in \{1,\dots,\dim(c_i)\}$. We will call these pairs \emph{local $\fC$--actions} (or \emph{local actions} if $\fC$ is clear).
Let $\sim$ be the equivalence relation on the set of local $\fC$--actions generated by
\begin{equation}\label{e:ActionRelation}
	(i,\bar{b}_i^{j})\sim (i+1,\bar{a}_{i+1}^j)
\end{equation}
for all $i\in\{1,\dots,l-1\}$, $j\in\{1,\dots,q_i\}$.

\begin{df}\label{d:Action}
	\emph{A $\fC$--action} (or an action if $\fC$ is clear) is an equivalence class of the relation $\sim$. The set of all $\fC$--actions will be denoted by $T(\fC)$. The $\fC$--action represented by $(i,r)$ will be denoted $[i,r]$.
\end{df}

The following proposition justifies the definition above.
\begin{prp}
	Fix $i\in\{1\dots,l-1\}$. Let $\bx=(x^1,\dots,x^{\dim(c_i)})\in I^{\dim(c_{i})}$, $\by=(y^1,\dots,y^{\dim(c_{i+1})})\in I^{\dim(c_{i+1})}$ be points such that
	\begin{itemize}
	\item{$x^j=1$ for every $j\in B_{i}$,}
	\item{$y^k=0$ for every $k\in A_{i+1}$,}
	\item{$x^j=y^k$ whenever $[i,j]=[i+1,k]$.}
	\end{itemize}
	Then $[c_i;\bx]=[c_{i+1};\by]$.
\end{prp}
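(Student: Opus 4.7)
The plan is to reduce both $[c_i;\bx]$ and $[c_{i+1};\by]$ to points on the common face $e_i := d^1_{B_i}(c_i) = d^0_{A_{i+1}}(c_{i+1})$, using the identity $[c;\delta^\varepsilon_A(\bx)] = [d^\varepsilon_A(c);\bx]$ recorded right after (\ref{e:DeltaCoords}). Concretely, the goal is to produce a single point in $\vI^{q_i}$ which represents both $[c_i;\bx]$ and $[c_{i+1};\by]$ after pulling out the fixed coordinates.

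First I would use the hypothesis $x^j = 1$ for $j \in B_i$ together with formula (\ref{e:DeltaCoords}) to extract the unique $\bx' \in \vI^{q_i}$ satisfying $\delta^1_{B_i}(\bx') = \bx$, whose coordinates are $(\bx')^r = x^{\bar{b}_i^r}$. Dually, the hypothesis $y^k = 0$ for $k \in A_{i+1}$ yields the unique $\by' \in \vI^{q_i}$ with $\delta^0_{A_{i+1}}(\by') = \by$ and $(\by')^r = y^{\bar{a}_{i+1}^r}$. For each $r \in \{1,\dots,q_i\}$, the generating relation (\ref{e:ActionRelation}) gives $[i,\bar{b}_i^r] = [i+1,\bar{a}_{i+1}^r]$, so the third hypothesis forces $x^{\bar{b}_i^r} = y^{\bar{a}_{i+1}^r}$; that is, $\bx' = \by'$. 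The chain
\[
	[c_i;\bx] = [c_i;\delta^1_{B_i}(\bx')] = [e_i;\bx'] = [e_i;\by'] = [c_{i+1};\delta^0_{A_{i+1}}(\by')] = [c_{i+1};\by]
\]
then closes the argument.

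The statement is essentially bookkeeping, and I do not foresee a genuine obstacle. What is worth noting is that the argument only invokes the third hypothesis on pairs $(j,k)$ already known to be equivalent via the single generating step (\ref{e:ActionRelation}), so no chasing through longer equivalence chains is needed; all longer chains will of course also respect the hypothesis, but they are not required here. The slightly more delicate point is the extraction of $\bx'$ and $\by'$: this is a direct application of (\ref{e:DeltaCoords}), which ensures that the equations $\delta^1_{B_i}(\bx') = \bx$ and $\delta^0_{A_{i+1}}(\by') = \by$ match coordinate by coordinate, so the only thing to verify is that the fixed coordinates have the prescribed values $1$ and $0$, which is exactly the first two hypotheses.
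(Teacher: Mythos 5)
Your proposal matches the paper's proof essentially step for step: both extract the reduced points $\bx' = (x^{\bar{b}_i^r})_r$ and $\by' = (y^{\bar{a}_{i+1}^r})_r$ via $\delta^1_{B_i}$ and $\delta^0_{A_{i+1}}$, invoke the generating relation $[i,\bar{b}_i^r] = [i+1,\bar{a}_{i+1}^r]$ to get $\bx' = \by'$, and then chain through the common face $e_i$. The proof is correct and identical in substance to the one in the paper.
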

\begin{proof}
	Since $x^j=1$ for all $j\in B_i$ and $y^k=0$ for all $k\in A_{i+1}$, we have
\[
	\bx=\delta^1_{B_i}(x^{\bar{b}_i^1},x^{\bar{b}_i^2},\dots,x^{\bar{b}_i^{q_i}}), \qquad \by=\delta^0_{A_{i+1}}(y^{\bar{a}_{i+1}^1}, y^{\bar{a}_{i+1}^2},\dots,y^{\bar{a}_{i+1}^{q_i}})
\]	
For all $r\in\{1,\dots,q_i\}$, we have  $[i,\bar{b}_i^r]=[i+1,\bar{a}_{i+1}^r]$. Therefore,
\[
	[c_i;\bx]=[d^1_{B_i}(c_i);(x^{\bar{b}_i^1},x^{\bar{b}_i^2},\dots,x^{\bar{b}_i^{q_i}})]=[d^0_{A_{i+1}}(c_{i+1}); (y^{\bar{a}_{i+1}^1}, y^{\bar{a}_{i+1}^2},\dots,y^{\bar{b}_{i+1}^{q_i}})]=[c_{i+1},\by]. \qedhere
\]	
\end{proof}

Let us collect some basic properties of $\fC$--actions:

\begin{enumerate}
\item{	\label{i:FaceEquivalence} 
	For every $i\in\{1,\dots,l-1\}$, the sequences of actions
	\begin{equation}\label{e:FaceEquity}	
		([i,r])_{r\in \{1,\dots,\dim(c_i)\}\setminus B_i} \quad \text{and} \quad ([i+1,s])_{s\in \{1,\dots,\dim(c_{i+1})\}\setminus A_{i+1}}
	\end{equation}
	are equal.
}
\item{
	Every action $p\in T(\fC)$ has at most one representative having the form $(i,r)$ for a fixed $i$. If such a representative exists, its second coordinate will be denoted by $r(p,i)$, so that $p=[i,r(p,i)]$. In such a case we will say that the action $p$ is \emph{active at the $i$--th stage}.
}
\item{
	For a given action $p$, the set of stages at which $p$ is active forms a (non-empty) interval, i.e., has the form
	\begin{equation}
		\{i:\;\beg(p)\leq i \leq \qend(p)\}
	\end{equation}
	for some integers $1\leq \beg(p)\leq \qend(p)\leq l$.
	Denote
\begin{align}\label{e:T}
	T^1_i(\fC)&=\{p\in T(\fC)\;|\; \qend(p)< i\} \notag\\
	T^*_i(\fC)&=\{p\in T(\fC)\;|\; \beg(p)\leq i\leq \qend(p)\}\\
	T^0_i(\fC)&=\{p\in T(\fC)\;|\; i< \beg(p)\}.\notag
\end{align}
	These are the sets of actions that are \emph{finished}, \emph{active} and \emph{unstarted}, respectively, at the $i$--th stage.
}
\item{
	For every $i$, the following pairs of conditions are equivalent:
	\begin{align}\label{e:BegEndAB}
		 \beg(p)=i \;\Leftrightarrow \; \text{$p=[i,r]$ for some $r\in A_i$}\\
		 \qend(p)=i \;\Leftrightarrow \; \text{$p=[i,r]$ for some $r\in B_i$}.\notag
	\end{align}
}
\item{
	We have
	\begin{align}\label{e:ABSets}
		\{1,\dots,l\} \supsetneq T^0_1(\fC) & \supseteq T^0_2(\fC)\supseteq \dots\supseteq T^0_k(\fC)= \emptyset \\
		\emptyset = T^1_1(\fC) & \subseteq T^1_2(\fC)\subseteq \dots\subseteq T^1_k(\fC)\subsetneq\{1,\dots,l\}.\notag
	\end{align}
}
\end{enumerate}

\section{Progress functions}
In this section we introduce progress functions, which provide a convenient description of d-paths lying in a given track $\fC$. Fix a bi-pointed $\square$--set $K$, a track $\fC=(c_i,A_i,B_i)_{i=1}^l$ in $K$ and numbers $a<b\in\R$.

\begin{df}\label{d:ProgressFunction}
	\emph{A progress function} of $\fC$ is a sequence $\bbf=(f^p)_{p\in T(\fC)}$ of non-decreasing continuous functions $[a,b]\to [0,1]$ such that there exist numbers
	\[
		a=t_0\leq t_1\leq \dots \leq t_k=b
	\]
	such that for every $p\in T(\fC)$
	\begin{itemize}
		\item{$f^p(t)=0$ for $t\leq t_{\beg(p)-1}$,}
		\item{$f^p(t)=1$ for $t\geq t_{\qend(p)}$.}
	\end{itemize}
%	The function $f^p$ will be called \emph{a progress function} of the $\fC$--action $p$.
	Let $\PF_{[a,b]}(\fC)$ be the space of progress functions of the track $\fC$, with the compact-open topology.
\end{df}

For every progress function $\bbf\in \PF_{[a,b]}(\fC)$ and an action $p\in T(\fC)$ we have $f^p(a)=0$ and $f^p(b)=1$. Thus, $f^p$ can be regarded as a d-path in $\vP_{[a,b]}(\vI)_0^1$, and $\PF_{[a,b]}(\fC)$,  as a subspace of $\vP_{[a,b]}(\vI^{T(\fC)})_{\bO}^\bI$.
\emph{The support of $f^p$}, defined by
\[
	\supp(f^p):=\{t\in [a,b]\;|\; 0<f^p(t)<1\}
\]
 is an open interval. We will denote its endpoints by $a_\bbf^p$ and $b_\bbf^p$ so that
\begin{itemize}
\item{  $\supp(f^p)=(a_\bbf^p,b_\bbf^p)$,}
\item{$f^p(t)=0$ for $t\in [a,a_\bbf^p]$,}
\item{$f^p(t)=1$ for $t\in [b_\bbf^p,b]$.}
\end{itemize} 

For any sequence of numbers $(t_i)$ satisfying Definition \ref{d:ProgressFunction} we have
\begin{equation}\label{e:BegEnd}
	t_{\beg(p)-1}\leq a_\bbf^p<b_\bbf^p\leq t_{\qend(p)}.
\end{equation}
\emph{The support of a progress function} $\bbf$ is the set 
\begin{equation}
	\supp(\bbf)=\bigcup_{p\in T(C)}\supp(f^p).
\end{equation}

In the remaining part of this section we will describe the relationship between progress functions of $\fC$ and d-paths lying in $\fC$.
Let $\bbf=(f^p)_{p\in T(\fC)}$ be a progress function of $\fC$. Our goal is to construct a d-path $\alpha^\bbf\in \vP(K;\fC)_{\bO}^\bI$ that corresponds to $\bbf$. For every $i\in\{1,\dots,l\}$ let
\begin{equation}
	a^\bbf_i:=\max_{p\in T^1_i(\fC)} b_\bbf^p	, \qquad b^\bbf_i:=\min_{p\in T^0_i(\fC)} a_\bbf^p.
\end{equation}
If $T^1_i(\fC)=\emptyset$ (resp. $T^0_i(\fC)=\emptyset$), we take $a^\bbf_i=a$ (resp. $b^\bbf_i=b$). The interval $[a^\bbf_i,b^\bbf_i]$ is the maximal one that is disjoint with the supports of functions $f^p$ for actions $p$ that are not active at the $i$--th stage.
For every $i$ define a d-path $\beta^\bbf_i\in\vP_{[a^\bbf_i,b^\bbf_i]}(\vI^{\dim(c_i)})$ by
\begin{equation}\label{e:AlphaFDef}
	\beta^{\bbf}_i(t)=(f^{[i,1]}(t), f^{[i,2]}(t), \dots, f^{[i,\dim(c_i)]}(t) ),
\end{equation}
and let $\alpha^\bbf_i=[c_i;\beta^\bbf_i]\in\vP_{[a^\bbf_i,b^\bbf_i]}(K)$ .

\begin{prp}\label{p:ValidityOfProgressPieces}
	For every progress function $\bbf\in F_{[a,b]}(\fC)$:
	\begin{enumerate}[\normalfont (a)]
		\item{
			$[a,b]=\bigcup_{i=1}^l [a^\bbf_i,b^\bbf_i]$,
		}
		\item{
			$\alpha^\bbf_i(t)=\alpha^\bbf_j(t)$ for all $i$, $j$ and $t\in [a^\bbf_i,b^\bbf_i]\cap [a^\bbf_j,b^\bbf_j]$.
		}
	\end{enumerate}
\end{prp}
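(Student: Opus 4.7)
My plan is to fix a witnessing sequence $a = t_0 \leq t_1 \leq \cdots \leq t_l = b$ for $\bbf \in \PF_{[a,b]}(\fC)$ (as in Definition \ref{d:ProgressFunction}) and lean on the single inequality (\ref{e:BegEnd}), namely $t_{\beg(p)-1} \leq a_\bbf^p < b_\bbf^p \leq t_{\qend(p)}$ for every $p \in T(\fC)$. Combined with the definitions of $a_i^\bbf$ and $b_i^\bbf$, this immediately gives $a_i^\bbf \leq t_{i-1}$ and $b_i^\bbf \geq t_i$ for every $i$ (the empty cases being covered by the conventions $a_i^\bbf = a$, $b_i^\bbf = b$). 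For part (a), I would then use $T_1^1(\fC) = T_l^0(\fC) = \emptyset$ from (\ref{e:ABSets}) to pin the endpoints $a_1^\bbf = a$, $b_l^\bbf = b$, while $a_{i+1}^\bbf \leq t_i \leq b_i^\bbf$ shows consecutive intervals overlap, so their union is $[a, b]$.

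For part (b) I would first reduce to the consecutive case $j = i + 1$. The inclusions $T_i^1(\fC) \subseteq T_k^1(\fC)$ and $T_k^0(\fC) \subseteq T_i^0(\fC)$ for $i \leq k$ make both $(a_i^\bbf)_i$ and $(b_i^\bbf)_i$ nondecreasing, so any $t \in [a_i^\bbf, b_i^\bbf] \cap [a_j^\bbf, b_j^\bbf]$ also lies in every intermediate $[a_k^\bbf, b_k^\bbf]$, and chaining pairwise equalities suffices. For $t \in [a_{i+1}^\bbf, b_i^\bbf]$, set $\bx = \beta_i^\bbf(t)$ and $\by = \beta_{i+1}^\bbf(t)$, and verify the three hypotheses of the unlabeled proposition stated just after Definition \ref{d:Action}: (i) for $r \in B_i$, (\ref{e:BegEndAB}) gives $\qend([i, r]) = i$, so $[i, r] \in T_{i+1}^1(\fC)$ and the definition of $a_{i+1}^\bbf$ forces $f^{[i, r]}(t) = 1$, whence $x^r = 1$; (ii) symmetrically $y^s = 0$ for $s \in A_{i+1}$; (iii) if $[i, r] = [i + 1, s]$ then $x^r = f^{[i, r]}(t) = f^{[i+1, s]}(t) = y^s$ tautologically. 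That proposition then delivers $[c_i; \bx] = [c_{i+1}; \by]$, i.e., $\alpha_i^\bbf(t) = \alpha_{i+1}^\bbf(t)$.

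The argument is essentially bookkeeping; the only subtle step will be aligning the notation in the consecutive case — correlating local-action membership in $T_{i+1}^1(\fC)$ (resp.\ $T_i^0(\fC)$) with the sets $B_i$ (resp.\ $A_{i+1}$) via (\ref{e:BegEndAB}), so that the three hypotheses of the gluing proposition fall out directly from the two inequalities $t \geq a_{i+1}^\bbf$ and $t \leq b_i^\bbf$.
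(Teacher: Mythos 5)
Your proof is correct and follows essentially the same route as the paper's: the inequality (\ref{e:BegEnd}) drives part (a), and part (b) reduces to consecutive indices where $f^{[i,r]}(t)=1$ for $r\in B_i$ and $f^{[i+1,s]}(t)=0$ for $s\in A_{i+1}$. The only (minor, clean) difference is that you invoke the unlabeled proposition following Definition \ref{d:Action} to glue $[c_i;\bx]=[c_{i+1};\by]$, whereas the paper re-derives that gluing computation inline by factoring $\beta_i^\bbf(t)$ through $\delta^1_{B_i}$ and $\beta_{i+1}^\bbf(t)$ through $\delta^0_{A_{i+1}}$.
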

\begin{proof}
	By (\ref{e:ABSets}) we have $a=a^\bbf_1\leq a^\bbf_2\leq \dots \leq a^\bbf_l$ and $b^\bbf_1\leq b^\bbf_2\leq \dots\leq b^\bbf_l=b$. Choose a sequence $(t_i)_{i=0}^l$ satisfying the condition in Definition \ref{d:ProgressFunction}. For every $i\in\{1,\dots,l-1\}$ we have
	\[
		a^\bbf_{i+1}=\max_{p\in T^1_{i+1}(\fC)}b^\bbf_p \overset{(\ref{e:BegEnd})}\leq \max_{p\in T^1_{i+1}(\fC)} t_{\qend(p)} \overset{(\ref{e:T})}\leq t_i
		\overset{(\ref{e:T})}\leq \min_{p\in T^0_i(\fC)} t_{\beg(p)-1} \overset{(\ref{e:BegEnd})}\leq \min_{p\in T^0_i(\fC)} a^\bbf_p = b^\bbf_i.
	\]
	This implies (a). To prove (b), it is enough to check that $\alpha^\bbf_i(t)=\alpha^\bbf_{i+1}(t)$ for every $t\in [a^\bbf_{i+1},b^\bbf_i]$.
	For $r\in\{1,\dots,\dim(c_i)\}$ we have 
	\[
		r\in B_i \;\overset{(\ref{e:BegEndAB})}\Rightarrow\;
		\qend([i,r])=i \;\Rightarrow\;
		[i,r]\in T^1_{i+1}(\fC) \;\Rightarrow\;
		a^\bbf_{i+1}\geq b^{[i,r]}_\bbf \;\Rightarrow\;
		f^{[i,r]}(a^\bbf_{i+1})=1.
	\]
	Thus, $f^{[i,r]}(t)=1$ for all $r\in B_i$. As a consequence, 
	\[
		(f^{[i,1]}(t),\dots,f^{[i,\dim(c_i)]})=\delta^1_{B_i}(f^{[i,\bar{b}_i^1]}(t),\dots,f^{[i,\bar{b}_i^{q_i}]}(t)),
	\]
	where $\bar{b}_i^j$ and $q_i$ are defined in (\ref{e:BarAB}). Finally,
	\begin{multline*}
		\alpha^\bbf_i(t)=[c_i;(f^{[i,1]}(t),\dots,f^{[i,\dim(c_i)]})]=[c_i;\delta^1_{B_i}(f^{[i,\bar{b}_i^1]}(t),\dots,f^{[i,\bar{b}_i^{q_i}]}(t))]\\
		= [d^1_{B_i}(c_i);(f^{[i,\bar{b}_i^1]}(t),\dots,f^{[i,\bar{b}_i^{q_i}]}(t))].
	\end{multline*}
	In a similar way we can show that
	\begin{equation*}
		\alpha^\bbf_{i+1}(t)=[d^0_{A_{i+1}}(c_{i+1});(f^{[i+1,\bar{a}_{i+1}^1]}(t), \dots, f^{[i+1,\bar{a}_{i+1}^{q_i}]}(t))].
	\end{equation*}
	Since $d^0_{A_{i+1}}(c_{i+1})=d^1_{B_i}(c_i)$ and $[i,\bar{b}^j_i]=[i+1,\bar{a}^j_{i+1}]$ for all $j$, the conclusion follows.
\end{proof}

\begin{df}
	\emph{The d-path associated to a progress function} $\bbf\in \PF_{[a,b]}$ is the unique d-path $\alpha^\bbf\in\vP_{[a,b]}(K;\fC)$ such that $\alpha(t)=\alpha^\bbf_i(t)$ for every $t\in [a^\bbf_i,b^\bbf_i]$. Proposition \ref{p:ValidityOfProgressPieces} guarantees that $\alpha^\bbf$ exists and is determined uniquely.
\end{df}

The construction presented above defines the map
\begin{equation}\label{e:ProgPathMap}
	\fR_\fC:\PF_{[a,b]}(\fC) \ni \bbf \mapsto \alpha^\bbf \in \vP_{[a,b]}(K;\fC),
\end{equation}		
which can be shown to be continuous. We skip a proof of this fact since it is tedious and not necessary for proving the main results of this paper.

Now we will construct a progress function associated to a given d-path $\alpha\in\vP_{[a,b]}(K;\fC)$. Choose a $\fC$--presentation
\begin{equation}\label{e:PathPresentation}
		\alpha=\overset{t_0}{\phantom{*}}[c_1;\beta_1]\overset{t_1}{*}[c_2;\beta_2]\overset{t_2}{*}\dots\overset{t_{l-1}}{*} [c_l;\beta_l]\overset{t_l}{\phantom{*}}.
\end{equation}

\begin{prp}\label{p:BetaGlues}
	Let $p\in T(\fC)$. Then
	\begin{enumerate}[\normalfont (a)]
	\item{$\beta_i^{r(p,i)}(t_i)=\beta_{i+1}^{r(p,i+1)}(t_i)$ whenever $\beg(p)\leq i <\qend(p)$,}
	\item{$\beta_i^{r(p,i)}(t_{i-1})=0$ for $i=\beg(p)$,}
	\item{$\beta_i^{r(p,i)}(t_i)=1$ for $i=\qend(p)$.}
	\end{enumerate}	
\end{prp}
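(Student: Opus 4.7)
The plan is to exploit the $\fC$-presentation identities $\beta_i(t_i) = \delta^1_{B_i}(\bx_i)$ and $\beta_{i+1}(t_i) = \delta^0_{A_{i+1}}(\bx_i)$ from Definition~\ref{d:LiesInTrack}, the coordinate formula (\ref{e:DeltaCoords}), and the equivalence relation (\ref{e:ActionRelation}) defining $\fC$-actions. Each of (a), (b), (c) corresponds to a different position of $p$ relative to the stage $i$, and the whole proof reduces to tracking how $r(p,i)$ sits inside $A_i$ or $B_i$.

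For (a), I would assume $\beg(p) \leq i < \qend(p)$, so $p$ is active at both stages $i$ and $i+1$. By (\ref{e:BegEndAB}), $\qend(p) > i$ gives $r(p,i) \notin B_i$, while $\beg(p) \leq i < i+1$ gives $r(p,i+1) \notin A_{i+1}$. Writing $r(p,i) = \bar{b}_i^j$ in the notation of (\ref{e:BarAB}), the generating identification (\ref{e:ActionRelation}) forces $p = [i, \bar{b}_i^j] = [i+1, \bar{a}_{i+1}^j]$, so $r(p,i+1) = \bar{a}_{i+1}^j$. Substituting into (\ref{e:DeltaCoords}), both $\beta_i^{r(p,i)}(t_i) = \delta^1_{B_i}(\bx_i)^{\bar{b}_i^j}$ and $\beta_{i+1}^{r(p,i+1)}(t_i) = \delta^0_{A_{i+1}}(\bx_i)^{\bar{a}_{i+1}^j}$ evaluate to the same coordinate $x_i^j$.

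For (b) with $i = \beg(p)$, (\ref{e:BegEndAB}) gives $r(p,i) \in A_i$. If $i \geq 2$, then $\beta_i(t_{i-1}) = \delta^0_{A_i}(\bx_{i-1})$ together with (\ref{e:DeltaCoords}) immediately yields $\beta_i^{r(p,i)}(t_{i-1}) = 0$. The case $i = 1$ (and dually $i = l$ in (c)) is the one delicate point: Definition~\ref{d:LiesInTrack} imposes explicit conditions only at the internal junctions $t_1,\dots,t_{l-1}$. I would resolve this by noting that condition~\ref{d:Track}.(a) forces $|A_1| = \dim(c_1)$, so $A_1 = \{1,\dots,\dim(c_1)\}$; under the natural convention $\bx_0 := () \in \vI^0$ (and symmetrically $\bx_l := () \in \vI^0$), the boundary conditions $\alpha(t_0) = \bO_K$ and $\alpha(t_l) = \bI_K$ collapse to $\beta_1(t_0) = \delta^0_{A_1}(()) = \bO$ and $\beta_l(t_l) = \delta^1_{B_l}(()) = \bI$, giving $\beta_1^{r(p,1)}(t_0) = 0$ directly from (\ref{e:DeltaCoords}). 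Part (c) is then symmetric: for $i = \qend(p) \leq l-1$ one uses $r(p,i) \in B_i$ with $\beta_i(t_i) = \delta^1_{B_i}(\bx_i)$, and the case $i = l$ uses the analogous observation that $B_l = \{1,\dots,\dim(c_l)\}$.
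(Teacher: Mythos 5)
Your proof is correct and follows essentially the same route as the paper: for (a) you observe $r(p,i)\notin B_i$ and $r(p,i+1)\notin A_{i+1}$ via (\ref{e:BegEndAB}), identify $r(p,i)=\bar b_i^j$, $r(p,i+1)=\bar a_{i+1}^j$, and read off both sides as the same coordinate $x_i^j$ using (\ref{e:DeltaCoords}); for (b) and (c) you use $r(p,i)\in A_i$ resp.\ $r(p,i)\in B_i$ together with the $\fC$--presentation identities. You are more careful than the paper about the boundary cases $i=1$ in (b) and $i=l$ in (c): Definition \ref{d:LiesInTrack} only introduces $\bx_i$ for $i\in\{1,\dots,l-1\}$, and the paper's own proof (which also has the typo $\bx_i$ where $\bx_{i-1}$ is meant) silently invokes $\beta_i(t_{i-1})=\delta^0_{A_i}(\bx_{i-1})$ without noting that this formula is not available at $i=1$. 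Your convention $\bx_0=\bx_l=()\in\vI^0$ is exactly what the paper itself tacitly assumes elsewhere (e.g.\ in the proof that $\len(\alpha)=\len(\fC)$, where it literally says ``when assuming $\lvert\bx_0\rvert=\lvert\bx_l\rvert=0$''), so this is the right fix. One small point of precision: phrasing this as the boundary conditions $\alpha(t_0)=\bO_K$ ``collapsing'' to $\beta_1(t_0)=\bO$ is slightly misleading, since $[c_1;\beta_1(t_0)]=\bO_K$ alone forces only $\beta_1(t_0)\in\{0,1\}^{\dim(c_1)}$ and, if $c_1$ has identified vertices, does not by itself force $\beta_1(t_0)=\bO$. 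Better to state that $\beta_1(t_0)=\bO$ and $\beta_l(t_l)=\bI$ are (implicitly) part of what a $\fC$--presentation requires, consistent with the stated convention.
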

\begin{proof}
	Assume that $\beg(p)\leq i <\qend(p)$. Then $r(p,i)\not\in B_i$ and there exists $j\in\{1,\dots,q_i\}$ such that $r(p,i)=\bar{b}_i^j$. Furthermore, $r(p,i+1)=\bar{a}_{i+1}^j$. Let $\bx_i\in \vI^{q_i}$ be a point such that $\beta_i(t_i)=\delta^1_{B_i}(\bx_i)$ and $\beta_{i+1}(t_i)=\delta^0_{A_{i+1}}(\bx_i)$. We have
	\[
		\beta^{r(p,i)}_i(t_i)=
		\beta_i^{\bar{b}^j_i}(t_i)=
		\delta^1_{B_i}(\bx_i)^{\bar{b}^j_i} \overset{(\ref{e:DeltaCoords})}=
		x_i^j\overset{(\ref{e:DeltaCoords})}=
		\delta^0_{A_{i+1}}(\bx_i)^{\bar{a}^j_{i+1}}=
		\beta_{i+1}^{\bar{a}^j_{i+1}}(t_i)=
		\beta^{r(p,i+1)}_{i+1}(t_i).
	\]
	If $i=\beg(p)$, then $r(p,i)\in A_i$ and then 
	\[
		\beta^{r(p,i)}_i(t_{i-1})=\delta^0_{A_i}(\bx_i)^{r(p,i)}=0.
	\]
	A similar argument shows (c).
\end{proof}

 For an arbitrary action $p\in T(\fC)$ let us define the function
\begin{equation}
	f^p_\alpha:[a,b]\ni t \mapsto
	\begin{cases}
		0 & \text{for $t\leq t_{\beg(p)-1}$}\\
		\beta_i^{r(p,i)}(t) & \text{for $\beg(p)\leq i \leq \qend(p)$ and  $t\in [t_{i-1},t_i]$}\\
		1 & \text{for $t\geq t_{\qend(p)}$}\\
	\end{cases}
\end{equation}
and let $\bbf_\alpha=(f_\alpha^p)_{p\in T(\fC)}$. Proposition \ref{p:BetaGlues} implies that this definition is valid.

\begin{prp}\label{p:ProgressOfDPath}
	$\bbf_\alpha$ is a progress function on $\fC$. Moreover, $\alpha^{\bbf_\alpha}=\alpha$.
\end{prp}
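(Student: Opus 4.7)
The plan is to verify the two claims separately, using the breakpoints $a = t_0 \leq t_1 \leq \cdots \leq t_l = b$ from the chosen $\fC$--presentation (\ref{e:PathPresentation}) of $\alpha$ as the witness sequence required by Definition \ref{d:ProgressFunction}.

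First I would check that each $f^p_\alpha$ is a continuous, non-decreasing function $[a,b] \to [0,1]$. Non-decreasingness on each piece $[t_{i-1}, t_i]$ with $\beg(p) \leq i \leq \qend(p)$ is immediate from the fact that $\beta_i$ is a d-path in $\vI^{\dim(c_i)}$, so each coordinate $\beta_i^{r(p,i)}$ is non-decreasing; the remaining pieces are the constants $0$ and $1$. For continuity, the pieces must match at the breakpoints: at $t = t_{\beg(p)-1}$, the transition from $0$ to $\beta_{\beg(p)}^{r(p,\beg(p))}$ is covered by Proposition \ref{p:BetaGlues}(b); at $t = t_{\qend(p)}$, the transition from $\beta_{\qend(p)}^{r(p,\qend(p))}$ to $1$ is covered by \ref{p:BetaGlues}(c); and at any intermediate breakpoint $t_i$ with $\beg(p) \leq i < \qend(p)$, part (a) gives $\beta_i^{r(p,i)}(t_i) = \beta_{i+1}^{r(p,i+1)}(t_i)$. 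The vanishing and unit conditions on $[a, t_{\beg(p)-1}]$ and $[t_{\qend(p)}, b]$ are built directly into the definition of $f^p_\alpha$. This yields $\bbf_\alpha \in \PF_{[a,b]}(\fC)$.

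Next I would show that $[t_{i-1}, t_i] \subseteq [a^{\bbf_\alpha}_i, b^{\bbf_\alpha}_i]$ for every $i$. For any $p \in T^1_i(\fC)$, we have $\qend(p) \leq i-1$, so by construction $f^p_\alpha \equiv 1$ on $[t_{\qend(p)}, b]$, giving $b^p_{\bbf_\alpha} \leq t_{\qend(p)} \leq t_{i-1}$; taking the maximum over $T^1_i(\fC)$ yields $a^{\bbf_\alpha}_i \leq t_{i-1}$. A symmetric argument using $T^0_i(\fC)$ gives $b^{\bbf_\alpha}_i \geq t_i$. Now fix $t \in [t_{i-1}, t_i]$: every local action $(i,r)$ is by definition active at the $i$--th stage, so applying the definition of $f^p_\alpha$ to $p = [i,r]$ gives $f^{[i,r]}_\alpha(t) = \beta_i^r(t)$. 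Substituting into (\ref{e:AlphaFDef}) yields $\beta^{\bbf_\alpha}_i(t) = \beta_i(t)$, hence $\alpha^{\bbf_\alpha}(t) = \alpha^{\bbf_\alpha}_i(t) = [c_i; \beta_i(t)] = \alpha(t)$. Since the intervals $[t_{i-1}, t_i]$ cover $[a,b]$, this establishes $\alpha^{\bbf_\alpha} = \alpha$.

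The main obstacle is purely notational: correctly tracking the correspondence between a global action $p$ and its local representative $(i, r(p,i))$ at each stage where $p$ is active. Proposition \ref{p:BetaGlues} is tailored precisely to supply the gluing data needed, so once the indexing is set up, both parts of the statement reduce to the elementary verifications sketched above.
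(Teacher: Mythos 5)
Your proposal is correct and follows essentially the same route as the paper's proof: use the breakpoints $(t_i)$ of the $\fC$--presentation as the witness sequence, establish the inclusion $[t_{i-1},t_i]\subseteq [a^{\bbf_\alpha}_i, b^{\bbf_\alpha}_i]$, and then compute $\alpha^{\bbf_\alpha}(t)=[c_i;\beta_i(t)]=\alpha(t)$ on each piece. You simply spell out two steps the paper compresses: the continuity/monotonicity check via Proposition \ref{p:BetaGlues} (which the paper handles in the sentence preceding the statement) and the verification of the interval inclusion (which the paper gives as a ``Notice that'').
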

\begin{proof}
	The sequence $(t_i)$ from the presentation (\ref{e:PathPresentation}) satisfies the conditions required in Definition \ref{d:ProgressFunction}. Notice that  $[t_{i-1},t_i]\subseteq [a^{\bbf_\alpha}_i, b^{\bbf_\alpha}_i]$, for all $i\in\{1,\dots,l\}$. Then, for every $t\in [t_{i-1},t_i]$, $\alpha^{\bbf_\alpha}_i(t)$ is well-defined, and
	\begin{multline*}
		\alpha^{\bbf_\alpha}(t)=
		\alpha_i^{\bbf_\alpha}(t)=
		[c_i;(f_\alpha^{[i,1]}(t),\dots,f_\alpha^{[i,\dim(c_i)]}(t))]=
		[c_i;(\beta_i^{r([i,1],i)}(t),\dots,\beta_i^{r([i,\dim(c_i)],i)}(t))]=\\
		[c_i;(\beta_i^1(t),\dots,\beta_i^{\dim(c_i)}(t))]=
		[c_i;\beta_i(t)]=
		\alpha(t).\qedhere
	\end{multline*}
\end{proof}

The function $\bbf_\alpha$ will be called \emph{the progress function of $\alpha\in\vP(K;\fC)$} with the presentation (\ref{e:PathPresentation}). Proposition \ref{p:ProgressOfDPath} implies that the map $\fR_\fC$ is surjective. This map is not, in general, a bijection: $\bbf^\alpha$ depends not only on the d-path $\alpha$ but also on the choice of its presentation, as shown in the example below.

\begin{exa}
	Let $K$ be the $\square$--set having exactly one cube in dimensions 0, 1, 2 and no cubes in higher dimensions. Let $e$ be the only 2--dimensional cube and let $\fC=(c_1=e, A_1=\{1,2\}, B_1=\{1,2\})$. Let $\alpha\in\vP_{[0,2]}(K)$ be the d-path given by the $\fC$--presentation $\alpha=[e;\beta]$, where
	\[
		\beta(t)=\begin{cases}
			(t,0) & \text{for $t\in [0,1]$}\\
			(1,t-1) & \text{for $t\in [1,2]$.}
		\end{cases}
	\]
	 Then $T(\fC)=\{[1,1],[1,2]\}$ and the progress function of $\alpha$ is given by
	\[
		f_\alpha^{[1,1]}=\begin{cases}
			t & \text{for $t\in [0,1]$}\\
			1 & \text{for $t\in [1,2]$.}
		\end{cases},
		\qquad
		f_\alpha^{[1,2]}=\begin{cases}
			0 & \text{for $t\in [0,1]$}\\
			t-1 & \text{for $t\in [1,2]$.}
		\end{cases},
	\]
	But $\alpha$ has another $\fC$--presentation, namely $\alpha=[e;\beta']$, where
	\[
		\beta'(t)=\begin{cases}
			(0,t) & \text{for $t\in [0,1]$}\\
			(t-1,1) & \text{for $t\in [1,2]$,}
		\end{cases}
	\]
	which gives the progress function with $f_\alpha^{[1,1]}$ and $f_\alpha^{[1,2]}$ swapped.
\end{exa}

\section{Tamification theorem}

In this section we will use the results obtained above to prove that the spaces of d-paths and of tame d-paths are homotopy equivalent. 
The main result is the following:

\begin{thm}\label{t:Tame}
	For every $n\geq 0$ and every $K\in\square\Set\bip$, all the inclusions in the diagram
	\begin{equation}\label{e:TamingDiagram}
		\begin{diagram}
			\node{\vP^t_{[0,n]}(K;n)_\bO^\bI}
				\arrow{e,t}{\subseteq}
			\node{\vP_{[0,n]}(K;n)_\bO^\bI}
		\\
			\node{\vN^t_{[0,n]}(K)_\bO^\bI}
				\arrow{e,t}{\subseteq}
				\arrow{n,l}{\subseteq}
			\node{\vN_{[0,n]}(K)_\bO^\bI}		
				\arrow{n,r}{\subseteq}
			\end{diagram}
		\end{equation}
	are homotopy equivalences.
\end{thm}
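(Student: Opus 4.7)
The right vertical $\iota\colon \vN_{[0,n]}(K)_\bO^\bI\hookrightarrow \vP_{[0,n]}(K;n)_\bO^\bI$ is already a homotopy equivalence by Raussen's naturalization result (Section~2.8), with homotopy inverse $\nat^K_n$. Raussen's homotopy is pointwise given by composition with a reparametrization of the unit interval, which preserves any tame presentation; hence the same argument restricted to tame d-paths shows that the left vertical $k\colon \vN^t_{[0,n]}(K)_\bO^\bI\to \vP^t_{[0,n]}(K;n)_\bO^\bI$ is also a homotopy equivalence, with inverse $\nat^t$. From the commutative square $\iota\circ j = m\circ k$, two-out-of-three will reduce the theorem to showing that the bottom horizontal $j\colon \vN^t_{[0,n]}(K)_\bO^\bI\to\vN_{[0,n]}(K)_\bO^\bI$ is a homotopy equivalence; this is the main work.

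To prove that $j$ is a homotopy equivalence, the plan is to build a continuous, functorial-in-$K$ tamification self-map
\[
	T_K\colon \vP_{[0,n]}(K;n)_\bO^\bI \to \vP_{[0,n]}(K;n)_\bO^\bI,
\]
homotopic to the identity, satisfying $T_K(\vN_{[0,n]}(K)_\bO^\bI) \subseteq \vP^t_{[0,n]}(K;n)_\bO^\bI$. The construction uses the machinery of Sections~3--5: given $\alpha$, extract a track $\fC$ in which it lies together with its progress function $\bbf_\alpha = (f^p)_{p\in T(\fC)}$, and replace it by a ``rushed'' progress function $\tilde{\bbf} = (\tilde{f}^p)$ in which the support of each $\tilde{f}^p$ is slid as far left as the stage interval $[t_{\beg(p)-1},t_{\qend(p)}]$ permits; set $T_K(\alpha):=\alpha^{\tilde{\bbf}}$. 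The homotopy $H^K_s(\alpha):=\alpha^{(1-s)\bbf_\alpha+s\tilde{\bbf}}$ makes sense because the set of progress functions of a track is convex. For a natural d-path parametrized on $[0,n]$, the identity $|\beta_i(t)|-|\beta_i(a^\bbf_i)|=t-a^\bbf_i$ combined with the accounting $\sum_{p\in T^*_i(\fC)}f^p(t)=|\beta_i(t)|$ forces the rushed progress functions to take values in $\{0,1\}$ simultaneously at every integer time, so $T_K(\alpha)$ passes through a vertex at each integer time and is therefore tame.

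Given $T_K$, the composite $R_K := \nat^t\circ T_K|_{\vN}\colon \vN_{[0,n]}(K)_\bO^\bI\to \vN^t_{[0,n]}(K)_\bO^\bI$ serves as a homotopy inverse to $j$. On one side, $j\circ R_K = \nat\circ T_K|_{\vN}\simeq \nat\circ\iota = \id_{\vN_{[0,n]}(K)_\bO^\bI}$. On the other, a tame d-path in $\fC$ already has a ``maximally rushed'' progress function, so $T_K$ is the identity on $\vN^t_{[0,n]}(K)_\bO^\bI$ (or at worst the interpolation $H^K_s$ stays inside $\vP^t_{[0,n]}(K;n)_\bO^\bI$ when restricted there), giving $R_K\circ j\simeq \id_{\vN^t_{[0,n]}(K)_\bO^\bI}$. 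With $j$ established, $m$ follows by two-out-of-three applied to $\iota\circ j = m\circ k$.

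The hard part is the construction of $T_K$ itself: a single d-path can lie in many distinct tracks, so either a canonical track must be chosen functorially or, preferably, the rushed d-path $\alpha^{\tilde{\bbf}}$ must be shown to depend only on $\alpha$ and not on the choice of track or of $\fC$-presentation. Continuity in~$\alpha$ and functoriality in $K$ must then both be verified using the compatibility identities of Sections~3--5. As the introduction emphasizes, this is where the present approach departs most substantially from~\cite{Z-Perm}: the rushing-of-progress-functions tamification replaces the explicit but non-canonical simplicial formula used there, and making it well-defined and functorial is the principal technical obstacle.
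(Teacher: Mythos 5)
Your reduction is sound: the two vertical inclusions are homotopy equivalences by Raussen's naturalization result (with the observation that naturalization preserves tameness), and once the bottom horizontal map is shown to be a homotopy equivalence the top one follows by two-out-of-three. Your overall strategy — build a self-map $T_K$ of $\vP_{[0,n]}(K;n)_\bO^\bI$ that is homotopic to the identity, sends natural d-paths to tame d-paths, and preserves tame d-paths — is also the right shape.

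However, there is a genuine gap at exactly the point you flag as ``the principal technical obstacle'': the proposed construction of $T_K$ via rushed progress functions is never shown to be well-defined. A d-path lies in many tracks, and for a fixed track it admits many $\fC$-presentations (hence many progress functions $\bbf_\alpha$); the ``slide supports left'' prescription $\tilde{\bbf}$ a priori depends on all of these choices, as does the interpolation $H^K_s(\alpha)=\alpha^{(1-s)\bbf_\alpha+s\tilde{\bbf}}$. Without canonicity you get neither a map at all (on the level of points) nor continuity nor functoriality. The side claim that tame d-paths already carry a maximally rushed progress function, so $T_K$ restricts to the identity on $\vN^t$, has the same unresolved ambiguity. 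You correctly diagnose the difficulty but do not overcome it, and as written the proof does not go through.

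The paper sidesteps this entirely by reversing the logic. It first fixes a single explicit d-map $R:\overrightarrow{[0,n]}\times\vI\to\vI$ (e.g.\ $R(t,h)=\min(1,\max(0,4nt+12n^2h-8n^2))$) satisfying: $R(t,h)=0$ for $h\le\tfrac14$, $R(t,h)=1$ for $h\ge\tfrac34$, and each $\supp R(-,h)$ has length $\le\tfrac1{4n}$. The self-map is then defined coordinatewise and globally, $\bar R^K(\alpha)(t)=R^K(t,\alpha(t))$, so continuity, functoriality in $K$, independence of any track or presentation, and preservation of tame presentations (Proposition~\ref{p:Qt}) are all immediate; the homotopy to the identity is the linear one $R_s=sR+(1-s)\mathrm{pr}_2$. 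Tracks and progress functions enter only in the \emph{verification} (Proposition~\ref{p:TamingLemma}) that this already-defined $\bar R^K$ sends natural d-paths to tame ones: one chooses any track $\fC$ for $\alpha$, observes that $\bbf_{\bar R^K(\alpha)}=\bar R(\bbf_\alpha)$ componentwise, and then uses the compressed-support and Lipschitz estimates (Propositions~\ref{p:CompressedSupport} and~\ref{p:LongSupport}) to produce the vertex times. So the choice of track is made only inside a proof of a property, not inside the definition of the map, and no well-definedness issue arises. If you want to salvage your plan, you should replace the ``rush the progress function'' definition with this ``post-compose with a fixed compressing reparametrization'' definition; your progress-function bookkeeping is then roughly what is needed for the tameness verification.
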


For vertical maps this follows from \cite{R-Trace}.
The main idea of the proof of Theorem \ref{t:Tame} is to construct a functorial self-map of $\vP_{[0,n]}(K;n)_\bO^\bI$ that is homotopic to the identity and maps $\vN_{[0,n]}(K)_\bO^\bI$ into $\vP^t_{[0,n]}(K;n)_\bO^\bI$.

Let $R:\overrightarrow{[0,n]}\times \vI \to \vI$ be an arbitrary d-map such that $R(t,0)=0$ and $R(t,1)=1$ for all $t\in[0,n]$. For every $k\geq 0$, $R$ induces the map
\[
    R^k:\overrightarrow{[0,n]}\times \vI^k \ni (t;h^1,\dots,h^k) \mapsto (R(t,h^1),\dots,R(t,h^k))\in \vI^k.
\]
The maps $R^k$ are compatible with the face maps, i.e., for every $\varepsilon\in\{0,1\}$, $k\geq 0$ and $i\in\{1,\dots,k+1\}$, the diagram
\begin{equation}
	\begin{diagram}
		\node{\overrightarrow{[0,n]} \times \vI^k}
			\arrow{e,t}{R^k}
			\arrow{s,l}{\id\times \delta^\varepsilon_i}
		\node{\vI^k}
			\arrow{s,r}{\delta^\varepsilon_i}
	\\
		\node{\overrightarrow{[0,n]} \times \vI^{k+1}}
			\arrow{e,t}{R^{k+1}}
		\node{\vI^{k+1}}
	\end{diagram}
\end{equation}
commutes. As a consequence, for every $K\in\square\Set$, the maps $R^k$ induce the continuous d-map
\begin{equation}
	R^{K}: \overrightarrow{[0,n]} \times |K|\to |K|,
\end{equation}
such that
\begin{equation}
	R^K(t,[c;\bx])=[c;R^k(t,\bx)]=[c;R(t,x_1),\dots,R(t,x_k)]
\end{equation}
 for $c\in K[k]$, $\bx=(x^1,\dots,x^k)\in\vI^k$.

 For $s\in [0,1]$, define the map $R_s:\overrightarrow{[0,n]}\times \vI \to \vI$ by the formula $R_s(t,h)=sR(t,h)+(1-s)h$. The collection of maps $R^K_s:\overrightarrow{[0,n]} \times |K|\to |K|$ induced by the maps $R_s$ is a homotopy between $R^K$ and the projection on the second factor.

These maps define, for $0\leq a\leq b\leq n$, the following self-maps of d-path spaces:
\begin{align*}
	\bar{R}:\vP_{[a,b]}(\vI)&\to \vP_{[a,b]}(\vI)\\
	\bar{R}^k:\vP_{[a,b]}(\vI^k)&\to \vP_{[a,b]}(\vI^k)\\
	\bar{R}^K:\vP_{[a,b]}(K)&\to \vP_{[a,b]}(K)
\end{align*}
such that $\bar{R}(\alpha)(t)=R(t,\alpha(t))$, $\bar{R}^k(\alpha)(t)=R^k(t,\alpha(t))$, $\bar{R}^K(\alpha)(t)=R^K(t,\alpha(t))$. All these maps are homotopic to the respective identities via the families of maps $\bar{R}_s$, $\bar{R}^k_s$ and $\bar{R}^K_s$ that are defined in a similar way.

Now assume that $R:\overrightarrow{[0,n]}\times \vI \to \vI$ satisfies the following conditions:
\begin{enumerate}[\normalfont (a)]
\item $R(t,h)=0$ for $h\in [0,\tfrac{1}{4}]$,
\item $R(t,h)=1$ for $h\in [\tfrac{3}{4},1]$,
\item For every $h\in[0,1]$, the support of $R(-,h)$,
\[
	\supp(R(-,h))=\{t\in [0,n]\;|\; 0<R(t,h)<1\}
\]
is an interval having length less or equal to $\tfrac{1}{4n}$.
\end{enumerate}
Such a function exists; an example is given by the formula
\[
    R(t,h)=\min(1, \max(0, (4nt+12n^2h-8n^2))).
\]
These assumptions imply the following properties of $\bar{R}$:
\begin{prp}\label{p:CompressedSupport}
	For every $\alpha\in\vP_{[0,n]}(\vI)_0^1$, the support of $\bar{R}(\alpha)$ is an open interval having length less or equal to $\tfrac{1}{4n}$.
\end{prp}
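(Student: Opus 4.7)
My plan is to first identify the support of $\bar{R}(\alpha)$ as a genuine open interval $(s_1,s_2)$, and then bound its length by applying property (c) of $R$ to a single, carefully chosen value of $h$.

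For the first step, I observe that since $R:\overrightarrow{[0,n]}\times\vI\to\vI$ is a d-map and the inclusions $t\mapsto(t,h_0)$ and $h\mapsto(t_0,h)$ are d-paths, the function $R$ is non-decreasing in each variable separately. Combined with the monotonicity of $\alpha$, this implies that $\bar{R}(\alpha)(t)=R(t,\alpha(t))$ is continuous and non-decreasing on $[0,n]$. Conditions (a) and (b) give $\bar{R}(\alpha)(0)=R(0,0)=0$ and $\bar{R}(\alpha)(n)=R(n,1)=1$, so the level sets $\{t:\bar{R}(\alpha)(t)=0\}$ and $\{t:\bar{R}(\alpha)(t)=1\}$ are closed intervals of the form $[0,s_1]$ and $[s_2,n]$; the Intermediate Value Theorem forces $s_1<s_2$, so that $\supp(\bar{R}(\alpha))=(s_1,s_2)$.

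For the length bound, I fix arbitrary $t_1<t_2$ in $(s_1,s_2)$ and set $h^*:=\alpha(t_2)$. Then $R(t_2,h^*)\in(0,1)$ by definition of the support, while monotonicity in $h$ gives $R(t_1,h^*)\geq R(t_1,\alpha(t_1))>0$ and monotonicity in $t$ gives $R(t_1,h^*)\leq R(t_2,\alpha(t_2))<1$. Hence $t_1,t_2\in\supp(R(-,h^*))$, which by property (c) is an interval of length at most $\tfrac{1}{4n}$, so $t_2-t_1\leq\tfrac{1}{4n}$; taking $t_1\searrow s_1$ and $t_2\nearrow s_2$ yields $s_2-s_1\leq\tfrac{1}{4n}$.

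The only delicate point is the choice of a single $h^*$ that exhibits both endpoints as belonging to the same support interval $\supp(R(-,h^*))$; the choice $h^*=\alpha(t_2)$ works precisely because monotonicity of $\alpha$ combines with the separate monotonicities of $R$ to yield both inequalities simultaneously. The alternative $h^*=\alpha(t_1)$ would fail in general, since $R(t_2,\alpha(t_1))$ could already equal $1$ when $\alpha$ has risen appreciably between $t_1$ and $t_2$.
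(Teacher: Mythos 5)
Your proof is correct and follows essentially the same strategy as the paper: show $\bar{R}(\alpha)$ is non-decreasing from $0$ to $1$ (so its support is an open interval), then fix two points in the support and exhibit them as lying in $\supp(R(-,h^*))$ for a single $h^*$ via the separate monotonicities of $R$.

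However, your closing remark is mistaken. You claim that the choice $h^*=\alpha(t_1)$ would fail because $R(t_2,\alpha(t_1))$ could already equal $1$. This is not so: monotonicity of $R$ in the second variable gives $R(t_2,\alpha(t_1))\leq R(t_2,\alpha(t_2))<1$ (the latter since $t_2\in\supp(\bar{R}(\alpha))$), and monotonicity in the first variable gives $R(t_2,\alpha(t_1))\geq R(t_1,\alpha(t_1))>0$. So $t_2\in\supp(R(-,\alpha(t_1)))$ as well, and the choice $h^*=\alpha(t_1)$ works perfectly symmetrically to yours. In fact this is exactly the choice the paper makes: with $s<t$ in the support, the paper computes $0<R(s,\alpha(s))\leq R(t,\alpha(s))\leq R(t,\alpha(t))<1$ and concludes $s,t\in\supp(R(-,\alpha(s)))$. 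The two monotonicities are used in complementary roles in the two variants, but both close. The delicacy you perceived is illusory.
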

\begin{proof}
	Obviously $\bar{R}(\alpha)(0)=R(0,\alpha(0))=0$ and $\bar{R}(\alpha)(n)=R(n,\alpha(n))=1$.
	Assume that $s<t\in\supp(\bar{R})(\alpha)$, i.e., $0<\bar{R}(\alpha)(s)\leq\bar{R}(\alpha)(t)<1$. Then
	\[
		0<\bar{R}(\alpha)(s)=R(s,\alpha(s))\leq R(t,\alpha(s)) \leq R(t,\alpha(t))=\bar{R}(\alpha)(t)<1.
	\]
	Hence $s,t\in \supp(R(-,\alpha(s)))$, which implies that $t-s<\tfrac{1}{4n}$.
\end{proof}

\begin{prp}\label{p:LongSupport}
	Assume that $\alpha\in \vP_{[0,n]}(\vI)_0^1$ is a 1-Lipschitz function. If $t\in \supp(\bar{R}(\alpha))$, then $[t-\tfrac{1}{4},t+\tfrac{1}{4}]\subseteq \supp(\alpha)$.
\end{prp}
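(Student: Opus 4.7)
The plan is to unpack the definitions of the two supports and exploit the forced squeezing of $\alpha(t)$ together with the Lipschitz bound. First I would observe that the condition $t\in\supp(\bar{R}(\alpha))$ means precisely $0<R(t,\alpha(t))<1$. Assumption (a) on $R$ gives $R(t,h)=0$ for $h\leq \tfrac{1}{4}$, and (b) gives $R(t,h)=1$ for $h\geq \tfrac{3}{4}$; so the inequalities $0<R(t,\alpha(t))<1$ force
\[
    \tfrac{1}{4} < \alpha(t) < \tfrac{3}{4}.
\]

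Next I would check that the interval $[t-\tfrac14,t+\tfrac14]$ actually lies inside $[0,n]$ (so the statement makes sense). Since $\alpha\in\vP_{[0,n]}(\vI)_0^1$ is 1-Lipschitz with $\alpha(0)=0$, the bound $\alpha(t)\leq t$ combined with $\alpha(t)>\tfrac14$ yields $t>\tfrac14$. Similarly, 1-Lipschitz together with $\alpha(n)=1$ gives $\alpha(t)\geq 1-(n-t)$, and $\alpha(t)<\tfrac34$ then forces $t<n-\tfrac14$. So $[t-\tfrac14,t+\tfrac14]\subseteq[0,n]$.

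Finally, for any $s\in[t-\tfrac14,t+\tfrac14]$, the 1-Lipschitz property of $\alpha$ gives $|\alpha(s)-\alpha(t)|\leq |s-t|\leq\tfrac14$, so
\[
    \alpha(s)\in[\alpha(t)-\tfrac14,\,\alpha(t)+\tfrac14]\subseteq(0,1),
\]
using $\tfrac14<\alpha(t)<\tfrac34$. This means $s\in\supp(\alpha)$, which is the desired conclusion.

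There is no real obstacle here; the argument is essentially a two-line computation once one notices that (a) and (b) pin $\alpha(t)$ strictly between $\tfrac14$ and $\tfrac34$. The only thing to be careful about is the bookkeeping at the endpoints $t\approx 0$ or $t\approx n$, which is handled by the 1-Lipschitz bounds $\alpha(t)\leq t$ and $\alpha(t)\geq 1-(n-t)$ as above.
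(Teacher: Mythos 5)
Your proposal is correct and follows essentially the same route as the paper: conditions (a) and (b) on $R$ force $\tfrac14<\alpha(t)<\tfrac34$, and the $1$-Lipschitz bound then keeps $\alpha(s)$ strictly in $(0,1)$ for $s\in[t-\tfrac14,t+\tfrac14]$. You are in fact slightly more careful than the published proof in two small respects: you explicitly verify that $[t-\tfrac14,t+\tfrac14]\subseteq[0,n]$ (the paper writes $\alpha(t\pm\tfrac14)$ without comment), and you control all intermediate points $s$ directly via the Lipschitz inequality rather than appealing to monotonicity of $\alpha$ to interpolate from the endpoints.
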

\begin{proof}
	Assume that $\bar{R}(\alpha(t))=R(t,\alpha(t))>0$. By condition (a), we have $\alpha(t)>\tfrac{1}{4}$, which implies that $\alpha(t-\tfrac{1}{4})>0$. Using condition (b) we obtain that $\alpha(t)<\tfrac{3}{4}$ and, therefore, $\alpha(t+\tfrac{1}{4})<1$.
\end{proof}

\begin{prp}\label{p:TamingLemma}
	If $\alpha\in \vN_{[0,n]}(K)_\bO^\bI$, then $\bar{R}^K(\alpha)$ is tame.
\end{prp}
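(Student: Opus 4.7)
The plan is to combine the tracks-and-progress-functions machinery of Sections~3--5 with the sharp localization of Propositions~\ref{p:CompressedSupport} and~\ref{p:LongSupport}. Assuming $\alpha$ is non-constant (the constant case being trivial), I would first fix a track $\fC=(c_i,A_i,B_i)_{i=1}^l$, a $\fC$-presentation of $\alpha$, and its associated progress function $\bbf_\alpha=(f^p_\alpha)_{p\in T(\fC)}$; note that $|T(\fC)|=\len(\fC)=n$. The first observation is that naturality of $\alpha$ forces each $f^p_\alpha$ to be $1$-Lipschitz: for any $s\leq t$ one has $\sum_{p\in T(\fC)}(f^p_\alpha(t)-f^p_\alpha(s))=\len(\alpha|_{[s,t]})=t-s$, and non-negativity of every summand gives the bound. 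Applying $\bar{R}$ coordinate-wise to this presentation (using $R(t,0)=0$ and $R(t,1)=1$) exhibits $\bar{R}^K(\alpha)$ as a d-path lying in the same track $\fC$ with progress function $\bar{R}\circ\bbf_\alpha$. By Proposition~\ref{p:CompressedSupport}, each $\supp(\bar{R}(f^p_\alpha))$ is then an open interval of length at most $\tfrac{1}{4n}$, so the open set $W:=\bigcup_p\supp(\bar{R}(f^p_\alpha))\subseteq[0,n]$ has measure at most $\tfrac14$ and decomposes into finitely many maximal open components $(u_j,v_j)_{j=1}^k$, each of length at most $\tfrac14$.

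The key observation is that for $t\notin W$ every value $\bar{R}(f^p_\alpha)(t)$ lies in $\{0,1\}$, so reading $\bar{R}^K(\alpha)(t)$ off the track presentation exhibits it as a vertex of $K$. Since the vertices form a discrete subspace of $|K|$, the path $\bar{R}^K(\alpha)$ must be constant at a single vertex on each ``gap'' $[v_j,u_{j+1}]$ (adopting the conventions $v_0:=0$, $u_{k+1}:=n$), so these pieces are trivially tame. It remains to show that each component piece $\bar{R}^K(\alpha)|_{[u_j,v_j]}$ has the form $[e_j;\gamma_j]$ for a single cube $e_j$ of $K$ with $\gamma_j(u_j)=\bO$ and $\gamma_j(v_j)=\bI$. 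To this end I would set $A_j:=\{p\in T(\fC) : \supp(\bar{R}(f^p_\alpha))\subseteq(u_j,v_j)\}$; by maximality, $\bar{R}(f^p_\alpha)$ is constantly $0$ or $1$ throughout $[u_j,v_j]$ for $p\notin A_j$, while it runs from $0$ at $u_j$ to $1$ at $v_j$ for $p\in A_j$.

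The technical heart of the argument is that all actions in $A_j$ are simultaneously active at some common track stage. Because $v_j-u_j\leq\tfrac14$ and each $f^p_\alpha$ is $1$-Lipschitz, Proposition~\ref{p:LongSupport} applied at the midpoint of $\supp(\bar{R}(f^p_\alpha))$ places $m_j:=(u_j+v_j)/2$ inside $\supp(f^p_\alpha)$ for every $p\in A_j$, so $f^p_\alpha(m_j)\in(0,1)$ and every such $p$ is active at the unique stage $i_\ast$ with $m_j\in[t_{i_\ast-1},t_{i_\ast}]$. Setting $S:=\{r(p,i_\ast):p\in A_j\}$ and partitioning $\{1,\dots,\dim(c_{i_\ast})\}\setminus S$ into $S^0,S^1$ according to the constant value of the corresponding $\bar{R}(f^{[i_\ast,\,\cdot]}_\alpha)$ on $(u_j,v_j)$, I would take $e_j:=d^0_{S^0}d^1_{S^1}(c_{i_\ast})$ and $\gamma_j^p:=\bar{R}(f^p_\alpha)|_{[u_j,v_j]}$ for $p\in A_j$. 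The principal obstacle I anticipate is verifying that this cube $e_j$ does not depend on the stage used in its construction, i.e.\ that performing the analogous construction at any other stage $i$ whose track cube is visited by $\bar{R}^K(\alpha)$ on $(u_j,v_j)$ yields the same cube of $K$; I expect this to follow by induction on $|i-i_\ast|$, using the track identifications $d^1_{B_i}(c_i)=d^0_{A_{i+1}}(c_{i+1})$ together with the action equivalence $[i,\bar b_i^r]=[i+1,\bar a_{i+1}^r]$ to reconcile the constant $0/1$ values of the non-$A_j$ coordinates across consecutive track cubes. Once this consistency is established, concatenating the constant-vertex gap pieces with the cube pieces $[e_j;\gamma_j]$ will yield a tame presentation of $\bar{R}^K(\alpha)$.
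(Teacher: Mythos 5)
Your proposal follows the paper's machinery closely (track, $\fC$--presentation, progress function of $\alpha$, push forward by $\bar{R}$, $1$-Lipschitz observation, decomposition of $\supp(\bbf_\omega)$ into components, Propositions~\ref{p:CompressedSupport} and~\ref{p:LongSupport}), and your observation that all actions in $A_j$ are active at a common stage $i_\ast$ is the right idea. But the argument stalls exactly where it matters: you do not establish that $\omega:=\bar{R}^K(\alpha)$ restricted to $[u_j,v_j]$ can be written in a \emph{single} cube, and the ``principal obstacle'' you identify --- stage-independence of $e_j$ --- is the wrong thing to aim at. The definition of tameness only requires \emph{some} presentation with vertex endpoints; there is no well-definedness constraint to verify. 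What actually needs to be shown is the containment $[u_j,v_j]\subseteq[a^{\bbf_\omega}_{i_\ast},b^{\bbf_\omega}_{i_\ast}]$, so that $\omega|_{[u_j,v_j]}=\alpha^{\bbf_\omega}_{i_\ast}|_{[u_j,v_j]}=[c_{i_\ast};\beta^{\bbf_\omega}_{i_\ast}|_{[u_j,v_j]}]$ holds outright. Once that containment is in hand, your stage-independence worry simply does not arise (only the stage $i_\ast$ is used), and the explicit face $e_j=d^0_{S^0}d^1_{S^1}(c_{i_\ast})$ becomes an unnecessary refinement.

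The containment is exactly what the paper's proof supplies, by analyzing $T^1_{i(s)}(\fC)$ and $T^0_{i(s)}(\fC)$: for $p$ finished before stage $i(s)$ one has $f^p_\alpha\equiv 1$ from $t_{i(s)-1}$ onward, hence $u_{s'}\notin\supp(f^p_\alpha)$ for all $s'\geq s$, hence $p\in A_1\cup\dots\cup A_{s-1}$ and $b^p_{\bbf_\omega}\leq y_{s-1}$; symmetrically for unstarted actions. This gives $[y_{s-1},x_{s+1}]\subseteq[a^{\bbf_\omega}_{i(s)},b^{\bbf_\omega}_{i(s)}]$, i.e. the component \emph{together with both flanking gaps} lies in the validity interval of stage $i(s)$. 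The paper then cuts at points $z_s$ chosen inside the gaps and presents $\omega$ as a concatenation of pieces $[c_{i(s)};\beta^{\bbf_\omega}_{i(s)}|_{[z_{s-1},z_s]}]$, whose endpoints are automatically vertices --- no separate ``gap pieces'' and no face cube $e_j$ needed. Your proof as written is incomplete at precisely this step; the stage-independence induction you propose in its place is both more complicated and not the argument that is needed.
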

\begin{proof}
	Denote $\omega=\bar{R}^K(\alpha)$.
	Choose a track $\fC=(c_i,A_i,B_i)_{i=1}^l$ containing $\alpha$, and a $\fC$--presentation
	\begin{equation}\label{e:PresOfAlpha}
		\alpha=\overset{t_{0}}{\phantom{*}}[c_1;\beta_1]\overset{t_1}*\dots\overset{t_{l-1}}*[c_l;\beta_l] \overset{t_{l}}{\phantom{*}},
	\end{equation}
	Immediately from the definition follows that
	\begin{equation}
		\omega(t)=[c_i; R^{\dim(c_i)}(t, \beta_i(t))]
	\end{equation}
	for all $i\in\{1,\dots,l\}$, $t\in[t_{i-1},t_i]$. Thus,
	\begin{equation}\label{e:PresOfOmega}
		\omega=\overset{t_{0}}{\phantom{*}}[c_1;\bar{R}^{\dim(c_1)}(\beta_1)]\overset{t_1}*\dots\overset{t_{l-1}}*[c_l;\bar{R}^{\dim(c_l)}(\beta_l)]\overset{t_{l}}{\phantom{*}}
	\end{equation}
	is a $\fC$--presentation of $\omega$.
	Let $\bbf_\alpha=(f_\alpha^p)_{p\in T(\fC)}$ and $\bbf_\omega=(f_\omega^p)_{p\in T(\fC)}$ be the progress functions of $\alpha$ and $\omega$ associated to the presentations (\ref{e:PresOfAlpha}) and (\ref{e:PresOfOmega}), respectively. Clearly $f_\omega^p=\bar{R}(f_\alpha^p)$ for all $p\in T(\fC)$.
	Recall that
		\[
		\supp(\bbf_\omega)=\bigcup_{p\in T(\fC)}\supp(f_\omega^p) =\bigcup_{p\in T(\fC)}(a^p_{\bbf_\omega},b^p_{\bbf_\omega}).
	\]
	Let
	\[
		\supp(\bbf_\omega)=(x_1,y_1)\cup\dots\cup (x_r,y_r) \subseteq (0,n)
	\]
	be the decomposition into the union of connected components, ordered increasingly. Let $y_0=0$, $x_{r+1}=n$. 
	For every $s\in \{1,\dots,r\}$, let $A_s\subseteq T(\fC)$ be the set of actions $p$ such that $\supp(f_\omega^p)\subseteq (x_s,y_s)$.
	Clearly,
\[
	T(\fC)=A_1\cupdot\dots\cupdot A_r.
\]	
	 By Proposition \ref{p:CompressedSupport},  for every $s\in\{1,\dots,r\}$ the length of the interval $(x_s,y_s)$ is less than $\tfrac{1}{4n}|A_s|\leq\tfrac{1}{4}$. Thus, by Proposition \ref{p:LongSupport} we have
	\[
		(x_s,y_s) \subseteq \bigcap_{p\in A_s} \supp(f_\alpha^p),
	\]
	since $f^p_\alpha$ is a 1--Lipschitz function for every action $p$.
	
	Fix $s\in \{1,\dots,r\}$. Choose $i(s)\in\{1,\dots,l\}$ and $u_s\in[0,n]$ such that $u_s\in [t_{i(s)-1},t_{i(s)}]\cap (x_s,y_s)$. Assume that $p\in T^1_{i(s)}(\fC)$. For every $s'\geq s$ we have
	\[
		1=f^p_\alpha(t_{i(s)-1})\leq f^p_\alpha(u_{s})\leq f^p_\alpha(u_{s'}).
	\]
	Therefore, $u_{s'}\not\in\supp(f^p_\alpha)$, which implies that $p\not\in A_{s'}$. As a consequence,
	\[
		p\in A_1\cup \dots \cup A_{s-1}
	\]
	and then $b^p_{\bbf_\omega}\leq y_{s-1}$.
	Therefore, for all $s$ we have
	\[
		a^{\bbf_\omega}_{i(s)}=\max_{p\in T^1_{i(s)}(\fC)} b^{\bbf_\omega}_p \leq y_{s-1}.
	\]
	A similar argument shows that also
	\[
		b^{\bbf_\omega}_{i(s)}=\min_{p\in T^0_{i(s)}(\fC)} a_{\bbf_\omega}^p \geq x_{s+1}.
	\]
	Choose a sequence $(z_s)_{s=0}^{r}$ such that $z_0=0$, $z_{r}=n$ and $y_s\leq z_s\leq x_{s+1}$ for $s\in\{1,\dots,r-1\}$. For every $s\in \{0,\dots,r\}$, $\omega(z_s)$ is a vertex.  Since $[z_{s-1},z_s] \subseteq [y_{s-1},x_{s+1}]\subseteq [a^{\bbf_\omega}_{i(s)}, b^{\bbf_\omega}_{i(s)}]$, we have
	\[
		\omega|_{[z_{s-1},z_s]} \overset{\text{Prop. \ref{p:ProgressOfDPath}}}{=} \alpha^{\bbf_\omega}|_{[z_{s-1},z_s]}= \alpha^{\bbf_\omega}_{i(s)}|_{[z_{s-1},z_s]} =[c_{i(s)}; \beta^{\bbf^\omega}_{i(s)}|_{[z_{s-1},z_s]}],
	\]
	which shows that $\omega$ is tame.
\end{proof}

\begin{prp}\label{p:Qt}
	For every $s\in [0,1]$,  $\bar{R}^K_s(\vP^t_{[0,n]}(K)_\bO^\bI)\subseteq \vP^t_{[0,n](K)_\bO^\bI}$.
\end{prp}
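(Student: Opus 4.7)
The plan is to start from a tame presentation of a given $\alpha\in\vP^t_{[0,n]}(K)_\bO^\bI$ and verify directly that applying $\bar{R}^K_s$ produces a d-path admitting a tame presentation with the same subdivision points.

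Write
\[
	\alpha=\overset{t_{0}}{\phantom{*}}[c_1;\beta_1]\overset{t_1}{*}\dots\overset{t_{l-1}}{*}[c_l;\beta_l]\overset{t_{l}}{\phantom{*}}
\]
with $\beta_i(t_{i-1})=\bO$ and $\beta_i(t_i)=\bI$ for all $i$, which exists by the definition of tameness. From $\bar{R}^K_s(\alpha)(t)=R^K_s(t,\alpha(t))=[c_i;R_s^{\dim(c_i)}(t,\beta_i(t))]$ for $t\in[t_{i-1},t_i]$, we obtain the candidate presentation
\[
	\bar{R}^K_s(\alpha)=\overset{t_{0}}{\phantom{*}}[c_1;\bar{R}_s^{\dim(c_1)}(\beta_1)]\overset{t_1}{*}\dots\overset{t_{l-1}}{*}[c_l;\bar{R}_s^{\dim(c_l)}(\beta_l)]\overset{t_{l}}{\phantom{*}}.
\]

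It remains to check two things. First, each $\bar{R}_s^{\dim(c_i)}(\beta_i)$ is a d-path in $\vI^{\dim(c_i)}$: indeed $R_s(t,h)=sR(t,h)+(1-s)h$ is non-decreasing in each variable (being an affine combination of the d-map $R$ and the projection $(t,h)\mapsto h$), hence is a d-map, and this passes coordinatewise to $R_s^k$. Second, we must verify the tameness condition $\bar{R}_s^{\dim(c_i)}(\beta_i)(t_{i-1})=\bO$ and $\bar{R}_s^{\dim(c_i)}(\beta_i)(t_i)=\bI$. This follows from the identities $R_s(t,0)=s\cdot 0+(1-s)\cdot 0=0$ and $R_s(t,1)=s\cdot 1+(1-s)\cdot 1=1$, which hold for every $t\in[0,n]$ and every $s\in[0,1]$, combined with $\beta_i(t_{i-1})=\bO$ and $\beta_i(t_i)=\bI$.

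There is no real obstacle here: the whole content is that the two-sided ``boundary-fixing'' conditions (a) and (b) imposed on $R$ near $h=0$ and $h=1$ are preserved by the convex interpolation defining $R_s$, which is exactly what makes the subdivision points $t_0,\dots,t_l$ still map to vertices of $|K|$ under $\bar{R}^K_s(\alpha)$. Hence $\bar{R}^K_s(\alpha)\in\vP^t_{[0,n]}(K)_\bO^\bI$, as required.
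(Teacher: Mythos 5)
Your proof is correct and is essentially identical to the paper's: the paper also applies $\bar{R}^K_s$ segmentwise to a tame presentation and observes that the endpoints $\bO$ and $\bI$ are preserved because $R_s(t,0)=0$ and $R_s(t,1)=1$. (One small remark: the endpoint-fixing you need is just $R(t,0)=0$ and $R(t,1)=1$, which was part of the definition of $R$ from the start, not the stronger plateau conditions (a) and (b); your final paragraph slightly overstates the role of the latter, but your displayed calculation uses the correct facts.)
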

\begin{proof}
	If $\alpha=\overset{t_{0}}{\phantom{*}}[c_1;\beta_1]\overset{t_{1}}*\dots\overset{t_{l-1}}*[c_l;\beta_l]\overset{t_{l}}{\phantom{*}}$ is a tame presentation, then also
	\[
		\bar{R}^K_s(\alpha)=[c_1; \bar{R}^{\dim(c_1)}_s(\beta_1)]*\dots*[c_r; \bar{R}^{\dim(c_r)}_s(\beta_r)]
	\]
	is a tame presentation, since $\bar{R}^{\dim(c_{i})}_s(\beta_{i})(t_{i-1})=\bO$ and $\bar{R}^{\dim(c_i)}_s(\beta_{i})(t_i)=\bI$.
\end{proof}

\begin{proof}[Proof of \ref{t:Tame}]
	By Proposition \ref{p:TamingLemma}, the diagram (\ref{e:TamingDiagram}) can be completed to the diagram
	\[
		\begin{diagram}
			\node{\vP^t_{[0,n]}(K;n)_\bO^\bI}
				\arrow{e,t}{\subseteq}
			\node{\vP_{[0,n]}(K;n)_\bO^\bI}
		\\
			\node{\vN^t_{[0,n]}(K)_\bO^\bI}
				\arrow{e,t}{\subseteq}
				\arrow{n,lr}{\subseteq}{\simeq}
			\node{\vN_{[0,n]}(K)_\bO^\bI}		
				\arrow{n,lr}{\simeq}{\subseteq}
				\arrow{nw,t}{\bar{R}^K}
		\end{diagram}
	\]
	Since $\bar{R}^K:\vP_{[0,n]}(K;n)_\bO^\bI\to \vP_{[0,n]}(K;n)_\bO^\bI$ is homotopic to the identity via the maps $\bar{R}^K_s$, the right-hand triangle of the diagram commutes up to homotopy this diagram commutes up to homotopy. For similar reasons, Proposition \ref{p:Qt} implies that also the left-hand triangle commutes up to homotopy. Both vertical inclusions are homotopy equivalences: their homotopy inverses are the naturalization maps (see Subsection \ref{ss:Nat}), since the a naturalization of a tame d-path is tame. Now an easy  diagram-chasing argument shows that all the maps in the diagram are homotopy equivalences.
\end{proof}

Let $\Tam_n^K:\vP_{[0,n]}(K;n)_\bO^\bI \to \vN^t_{[0,n]}(K)_\bO^\bI$ be the composition
\begin{equation}\label{e:Tam}
		\vP_{[0,n]}(K;n)_\bO^\bI
			\xrightarrow{\nat^K_n}
		\vN_{[0,n]}(K)_\bO^\bI
			\xrightarrow{\bar{R}^K}
		\vP^t_{[0,n]}(K;n)_\bO^\bI
			\xrightarrow{\nat^K_n}
		\vN^t_{[0,n]}(K)_\bO^\bI.
\end{equation}
Here follows an immediate consequence of Theorem \ref{t:Tame}:

\begin{cor}\label{c:Tam}
	For every $n\geq 0$ and every bi-pointed $\square$--set $K\in \square\Set\bip$, the map $\Tam^K_n$ a homotopy inverse of the inclusion $i^K_n:\vN^t_{[0,n]}(K)_\bO^\bI\subseteq \vP_{[0,n]}(K;n)_\bO^\bI$. Furthermore,  the maps $\Tam^K_n$ are functorial with respect to $K$, i.e., they define the natural transformation $\vP_{[0,n]}(-;n)_\bO^\bI\Rightarrow \vN^t_{[0,n]}(-)_\bO^\bI$ of functors $\square\Set\bip\to\Top$.
\end{cor}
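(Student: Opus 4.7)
My plan is to verify directly that $\Tam^K_n \circ i^K_n \simeq \id_{\vN^t_{[0,n]}(K)_\bO^\bI}$, then promote this to a two-sided homotopy inverse via a standard formal argument using the fact (from Theorem \ref{t:Tame}) that $i^K_n$ is already a homotopy equivalence. Functoriality will follow from the fact that each factor in the composition defining $\Tam^K_n$ is already natural in $K$.

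To begin, I would confirm that $\Tam^K_n$ is well defined. By Proposition \ref{p:TamingLemma}, the map $\bar{R}^K$ sends $\vN_{[0,n]}(K)_\bO^\bI$ into $\vP^t_{[0,n]}(K;n)_\bO^\bI$, and the naturalization of a tame d-path is again tame because a non-decreasing reparametrization preserves the property of passing through vertices. For $\alpha \in \vN^t_{[0,n]}(K)_\bO^\bI$, the path $i^K_n(\alpha)$ is natural of length $n$, so $\nat^K_n(i^K_n(\alpha)) = \alpha$, giving
\[
	\Tam^K_n \circ i^K_n \;=\; \nat^K_n \circ \bar{R}^K\big|_{\vN^t_{[0,n]}(K)_\bO^\bI}.
\]
Proposition \ref{p:Qt} says the entire family $\bar{R}^K_s$ preserves $\vP^t$, so restricted to $\vN^t$ it provides a homotopy in $\vP^t$ from the inclusion $\vN^t \hookrightarrow \vP^t$ (at $s = 0$) to $\bar{R}^K|_{\vN^t}$ (at $s = 1$). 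Post-composing this homotopy with $\nat^K_n: \vP^t \to \vN^t$ yields the desired homotopy, since $\nat^K_n$ is a left inverse of the inclusion by Raussen's theorem \cite{R-Trace}.

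Next, I would invoke the standard fact that any one-sided homotopy inverse of a homotopy equivalence is a two-sided one: if $f: A \to B$ has a homotopy inverse $g$, and $h: B \to A$ satisfies $h \circ f \simeq \id_A$, then $f \circ h \simeq f \circ h \circ f \circ g \simeq f \circ \id_A \circ g = f \circ g \simeq \id_B$. Theorem \ref{t:Tame} supplies such a $g$ for $f = i^K_n$, so the previous step verifies the hypothesis with $h = \Tam^K_n$, and we conclude that $\Tam^K_n$ is a two-sided homotopy inverse.

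Finally, for functoriality, each factor of $\Tam^K_n = \nat^K_n \circ \bar{R}^K \circ \nat^K_n$ is natural in $K$: naturality of $\nat^K_n$ is recorded in Subsection \ref{ss:Nat}, and $\bar{R}^K$ is natural because the maps $R^k$ are defined coordinate-wise, independently of $K$, while every bi-pointed $\square$-map acts on realizations by $|f|([c;\bx]) = [f(c);\bx]$, leaving coordinates untouched. There is no real obstacle here beyond bookkeeping of which d-path space each homotopy lives in; for instance, $\bar{R}^K_s$ restricted to $\vN^t$ stays in $\vP^t$ but not in $\vN^t$, which is precisely why the final $\nat^K_n$ is needed. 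The substantive topological content has already been carried out in Section 6.
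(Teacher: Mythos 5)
Your argument is correct and is exactly the unpacking the paper has in mind when it calls this an ``immediate consequence'' of Theorem \ref{t:Tame}: verify $\Tam^K_n \circ i^K_n \simeq \id$ directly via the homotopy $\bar{R}^K_s$ staying in $\vP^t$ (Proposition \ref{p:Qt}) post-composed with $\nat^K_n$, then promote to a two-sided inverse using the fact that $i^K_n$ is already a homotopy equivalence, and deduce functoriality from the naturality of $\nat^K_n$ and $\bar{R}^K$. No gaps; the only small redundancy is the closing remark about $\bar{R}^K_s|_{\vN^t}$ not landing in $\vN^t$, which is context rather than a proof step.
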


\section{Cube chains}

In this section we define the cube chain category $\Ch(K)$ of a bi-pointed $\square$--set $K$, and  formulate the main results of the paper.

\begin{df}
	Let $(K,\bO_K,\bI_K)$, $(L,\bO_L,\bI_L)$ be bi-pointed $\square$--sets. \emph{The wedge} of $K$ and $L$ is a bi-pointed $\square$--set $(K\vee L,\bO_{K\vee L}, \bI_{K\vee L)}$ such that
	\begin{itemize}
	\item{$(K\vee L)[n]=K[n]\coprod L[n]$ for $n>0$,}
	\item{$(K\vee L)[0]=(K[0]\coprod L[0])/\bI_K\sim \bO_L$,}
	\item{the face maps of $K\vee L$ are the disjoints unions of the face maps of $K$ and $L$,}
	\item{$\bO_{K\vee L}=\bO_K$, $\bI_{K\vee L}=\bI_L$.}
	\end{itemize}
\end{df}

\begin{rem}
	The wedge operation is associative but not commutative.
\end{rem}

Let $\Seq(n)$ be the set of sequences of positive integers $\bn=(n_1,\dots,n_l)$ such that $n_1+\dots+n_l=n$. For $\bn\in\Seq(n)$,
\begin{itemize}
	\item{$|\bn|=n$ is \emph{the length} of $\bn$,}
	\item{$l(\bn)=l $ is the number of elements of $\bn$,}
	\item{$t^\bn_i=\sum_{j=1}^i n_j$ for $i\in\{0,\dots,l(\bn)\}$ is \emph{the $i$--th vertex of $\bn$},}
	\item{$\Vertic(\bn)=\{t^\bn_i\}_{i=0}^{l(\bn)}$ is the set of \emph{vertices} of $\bn$,}
	\item{$\Free(\bn)=\{1,\dots,n-1\}\setminus \Vertic(\bn)$.}
\end{itemize}

\begin{df}
	Let $\bn\in \Seq(n)$. \emph{The wedge $\bn$-cube} is the bi-pointed $\square$--set
	\begin{equation*}
		\square^{{\vee}\bn}:= \square^{n_1}\vee\square^{n_2}\vee\dots\vee \square^{n_l},
	\end{equation*}
	where $\square^{n_i}$ is regarded as a bi-pointed $\square$--set by taking $\bO=(0,\dots,0)$, $\bI=(1,\dots,1)$. The geometric realization of $\square^{{\vee}\bn}$ will be denoted by $\vI^{{\vee}\bn}$. For $i\in\{1,\dots,l(\bn)\}$,
	\[
		v^\bn_i=\bI_{\square^{n_{i-1}}}=\bO_{\square^{n_i}} \in \square^{{\vee}\bn}[0]
	\]
	 is  \emph{the $i$--th vertex} of $\square^{{\vee}\bn}$.
\end{df}

Let $\square\mathbf{Ch}_n$ be the full subcategory of $\square\Set\bip$ with objects $\square^{{\vee}\bn}$ for $\bn\in\Seq(n)$.
We will introduce a notation for some morphisms of $\square\mathbf{Ch}_n$.
For a partition
\[
	\{1,\dots,m_1+m_2\}=A\cupdot B, \qquad |A|=m_2>0,\quad |B|=m_1>0,
\]
 let $\varphi_{A,B}:\square^{m_1}\vee \square^{m_2} \to \square^{m_1+m_2}$ be the unique bi-pointed $\square$--map such that $\varphi_{A,B}(u_{m_1})=d^0_{B}(u_{m_1+m_2})$ and $\varphi_{A,B}(u_{m_2})=d^1_A(u_{m_1+m_2})$. For $\bn\in \Seq(n)$, $i\in \{1,\dots,l(\bn)\}$ and $A\cupdot B=\{1,\dots,n_i\}$ let
\begin{multline}\label{e:DeltaWedgeCubeMap}
	\delta_{i,A,B}: \square^{n_1} \vee \dots\vee \square^{n_{i-1}} \vee\square^{|A|} \vee \square^{|B|} \vee\square^{n_{i+1}} \vee \dots \square^{n_l}\\
	\xrightarrow{\id_{\square^{n_1}}\vee\dots\vee \id_{\square^{n_{i-1}}}\vee \varphi_{A,B}\vee \id_{\square^{n_{i+1}}}\vee\dots\vee \id_{\square^{n_{l}}}}
	\square^{n_1} \vee \dots\vee \square^{n_{i-1}} \vee\square^{n_i}\vee\square^{n_{i+1}} \vee \dots \square^{n_l}=\square^{{\vee}\bn}.
\end{multline}

Fix a bi-pointed $\square$--set  $K\in\square\Set\bip$.
\begin{df}
	\emph{A cube chain} in $K$ (of length $n$) is a bi-pointed $\square$--map $\bc:\square^{{\vee}\bn^\bc}\to K$ for some $\bn^\bc\in\Seq(n)$. The multi-index $\bn=\bn^\bc$ will be called \emph{the type} of $\bc$. For short we denote $l(\bc)=l(\bn^\bc)$, $t^\bc_i=t^{\bn_\bc}_i$.
\end{df}

There is a 1-1 correspondence between cube chains in $K$ and sequences of cubes $(c_1,\dots,c_l)$ such that $d^0(c_1)=\bO_K$, $d^1(c_l)=\bI_K$ and $d^1(c_i)=d^0(c_{i+1})$ for all $i\in\{1,\dots,l-1\}$. Indeed, a cube chain $\bc:\square^{{\vee}\bn}\to K$ determines the sequence $(\bc(u_{\dim(c_i)}))_{i=1}^l$, and a sequence $(c_i)_{i=1}^l$ determines the unique $\square$--map
\begin{equation}
	\bc: \square^{\dim(c_1)}\vee \dots\vee \square^{\dim(c_l)} \to K
\end{equation}
such that $\bc(u_{\dim(c_i)})=c_i$. This shows that the notion of a cube chain defined here coincides with the definition introduced in  \cite{Z-Perm}. Below, we will identify cube chains $\bc$ and the corresponding sequences $(c_1,\dots,c_l)$ satisfying the conditions above.

%A track $\fC=(c_i,A_i,B_i)_{i=1}^l$ is \emph{tame} if $A_i=B_i=\{1,\dots,\dim(c_i)\}$ for all $i\in\{1,\dots,l\}$. There is a 1-1 correspondence between tame tracks and cube chains: the cube chain corresponding to the track $\fC$ is the $\square$--map
%\[	\bc: \vI^{\dim(c_1)}\vee \dots\vee \vI^{\dim(c_l)} \to K\]
%determined by $\bc(u_{\dim(c_i)})=c_i$, where $u_{\dim(c_i)}\in \square^{\dim(c_i)}[\dim(c_i)]$ is the only top-dimensional cube.

\begin{df}
	\emph{The length $n$ cube chain category} $\Ch(K;n)$ of $K$ is the slice category $\square\mathbf{Ch}_n\downarrow K$. In other words, objects of $\Ch(K;n)$ are cube chains of length $n$, and morphisms from $\ba$ to $\bb$ are commutative diagrams
	\begin{equation}
		\begin{diagram}
			\node{\square^{{\vee}\bn^\ba}}
				\arrow{e,t}{\ba}
				\arrow{s,l}{\varphi}
			\node{K}	
				\arrow{s,r}{=}
		\\
			\node{\square^{{\vee}\bn^\bb}}
				\arrow{e,t}{\bb}
			\node{K}	
		\end{diagram}
	\end{equation}
	in $\square\Set\bip$. \emph{The full cube chain category} of $K$ is $\Ch(K):=\coprod_{n\geq 0} \Ch(K;n)$.
\end{df}

For $\bc\in \Ch(K;n)$, $i\in\{1,\dots,l(\bn^\bc)\}$ and $A\cupdot B=\{1,\dots,n^\bc_i\}$ denote $d_{i,A,B}(\bc)=\bc\circ \delta_{i,A,B}$. The morphism $d_{i,A,B}(\bc)\to \bc$ of $\Ch(K;n)$ that is given by the composition with $\delta_{i,A,B}$ will be also denoted by  $\delta_{i,A,B}$.

There is a forgetful functor $\dom: \Ch(K;n)\to \square\Set\bip$ that assigns to every cube chain $\bc:\square^{{\vee}\bn^\bc}\to K$ its domain $\square^{{\vee}\bn^\bc}$. This is equipped with the natural transformation $\dom\to \const_K$, which is induced by the chains themselves. For every $n\geq 0$, this transformation induces the map
\begin{equation}\label{e:ChainCoveringMap}
	F_n^K:\Lcolim_{\ba\in \Ch_n(K;n)_v^w}  \vN_{[0,n]} (\square^{{\vee}\bn^\ba})_{\bO}^{\bI}\to \vN^{t}_{[0,n]}(K)_\bO^\bI.
\end{equation}

Consider the sequence of maps
\begin{equation} \label{e:MainSequence}
	|\Ch(K;n)| \xleftarrow{A_n^K} \Lhocolim_{\bc\in \Ch(K;n)}{\vN(\square^{{\vee}\bn^\bc})_{\bO}^\bI}
	\xrightarrow{Q_n^K} {\Lcolim_{\bc\in \Ch(K;n)}  {\vN(\square^{{\vee}\bn^\bc})_{\bO}^\bI}}
	\xrightarrow{F_n^K} {\vN_{[0,n]}^t(K)_{\bO}^\bI},
\end{equation}
where $A_n^K$ is the composition
\[
	 \Lhocolim_{\bc\in \Ch(K;n)}{\vN(\square^{{\vee}\bn^\bc})_{\bO}^\bI} \to  \Lhocolim_{\bc\in \Ch(K;n)}\{*\}\cong |\Ch(K;n)|,
\]
and $Q_n^K$ is the natural map from the homotopy colimit to the colimit of the functor $\vN(\square^{{\vee}\bn^{(-)}})_{\bO}^\bI$.

\begin{thm}\label{t:ChN}
	For every $K\in\square\Set\bip$, all the maps in the sequence {\normalfont (\ref{e:MainSequence})} are weak homotopy equivalences.
\end{thm}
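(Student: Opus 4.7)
The plan is to verify that each of the three maps in \eqref{e:MainSequence} is a weak homotopy equivalence, treating them from left to right.

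For the leftmost map $A_n^K$, the key observation is that every value $G(\bc) := \vN_{[0,n]}(\square^{{\vee}\bn^\bc})_\bO^\bI$ is contractible. Indeed, naturality together with the length constraint forces any such d-path to reach the $i$-th wedge vertex exactly at time $t_i^{\bn^\bc}$, so it decomposes uniquely as a concatenation of natural d-paths in the individual cubes of the wedge; this exhibits $G(\bc)$ as a product $\prod_i \vN_{[0,n_i^\bc]}(\vI^{n_i^\bc})_\bO^\bI$ of convex (hence contractible) spaces. The standard homotopy invariance of the homotopy colimit then gives that $\hocolim G \to \hocolim \const_{\{*\}} \cong |\Ch(K;n)|$ is a weak equivalence.

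For the right-hand map $F_n^K$, the approach is to analyze the preimage of a fixed $\alpha \in \vN_{[0,n]}^t(K)_\bO^\bI$, which records, modulo the colimit identifications, all natural tame presentations of $\alpha$ together with a cube chain in which $\alpha$ admits such a presentation. Using the results of Section 8, I would aim to show that the slice subcategory $\Ch(K;n)\downarrow \alpha$ of cube chains carrying $\alpha$, together with the relations induced by the merge morphisms $\delta_{i,A,B}$, has contractible classifying space---for example, by exhibiting a canonical ``maximal'' natural tame presentation of $\alpha$ (obtained by splitting each cube of a given chain as far as the combinatorics of $\alpha$ permits) to which all others retract. A Quillen Theorem A argument applied fiberwise over the cells of the target would then conclude.

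For the middle map $Q_n^K$, I would show that the diagram $G$ is projective cofibrant, so that the canonical map $\hocolim G \to \colim G$ is a weak equivalence. A Reedy-type structure on $\Ch(K;n)$ indexed by $l(\bn^\bc)$ is natural: every generating morphism $\delta_{i,A,B}$ strictly decreases the number of cubes, and every morphism factors as a composition of such merges. Under $G$, the map $G(\delta_{i,A,B}) : G(d_{i,A,B}(\bc)) \to G(\bc)$ is the closed inclusion of those natural d-paths which pass through a specified additional intermediate vertex at the prescribed time---a convex subspace inclusion. Assembling these coherently into a cofibrant diagram yields the desired weak equivalence. The principal obstacle, and the reason this step is harder than the Nerve Lemma argument of \cite{Z-Perm}, is that $\Ch(K;n)$ carries non-trivial automorphisms: a single natural tame d-path may admit many distinct natural tame presentations in a single chain, so the coherent handling of this presentation ambiguity---exactly what Section 8 is designed to address---is the technical heart of the argument.
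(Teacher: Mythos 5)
Your treatment of the left-hand map $A_n^K$ matches the paper's argument (contractibility of $\vN_{[0,n]}(\square^{\vee\bn^\bc})_\bO^\bI$ via decomposition into a product over the wedge cubes, plus standard homotopy invariance of $\hocolim$). That part is correct.

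For the middle map $Q_n^K$, the broad strategy (projective cofibrancy via a Reedy grading by $l(\bn^\bc)$ and Dugger's criterion) is indeed the paper's strategy, but you have left out exactly the parts that are hard. What must be shown is not merely that each single generating map $G(\delta_{i,A,B})$ is a closed inclusion, but that the \emph{latching map} $L_\bc G\to G(\bc)$, i.e.\ the map from the colimit over the whole boundary slice $\partial\Ch(\square^{\vee\bn^\bc})$, is a cofibration. Injectivity of that colimit map is a delicate point: it requires knowing that a path in $\vI^{\vee\bn}$ admitting an nt-presentation in a non-identity subchain admits a \emph{unique} minimal nt-presentation, which in the paper is established only because $\square^{\vee\bn}$ embeds in $\square^n$ (Proposition~\ref{p:EquivalenceOfMinNTForCubes}); it is false for general $\square$--sets by Example~\ref{x:TameExotic}. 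Your phrase ``convex subspace inclusion'' does not address this, and the actual verification that $\partial\vN_{[0,n]}(\square^{\vee\bn})_\bO^\bI \subset \vN_{[0,n]}(\square^{\vee\bn})_\bO^\bI$ is a cofibration requires an explicit strong neighbourhood deformation retraction built from the naturalization map (Proposition~\ref{p:PartialNCofibration}). Your proposal signals awareness that presentation non-uniqueness is an obstacle, but does not say how to overcome it.

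For the right-hand map $F_n^K$, the proposed approach contains a genuine gap. You want to exhibit ``a canonical `maximal' natural tame presentation of $\alpha$\ldots to which all others retract,'' obtained by splitting cubes as far as the combinatorics of $\alpha$ permits. That object is a \emph{minimal} nt-presentation in the paper's terminology, and Example~\ref{x:TameExotic} shows explicitly that a single natural tame $\alpha$ can have two \emph{non-equivalent} minimal nt-presentations (a path in $\partial\square^3$ viewed inside two different $3$--cubes glued along their boundaries), so there is no canonical one and no such retraction exists. A fibre-wise Quillen Theorem~A argument along these lines therefore cannot go through as stated. The paper's actual route is global rather than fibre-wise: it uses the functorial tamification map $\Tam^K_n$ of Corollary~\ref{c:Tam}, proves that any two nt-presentations of $\alpha$ become \emph{equivalent} after applying $\Tam^n$ (Propositions~\ref{p:RegularNTAreEquivalent}, \ref{p:RegularTamePathCriterion}, \ref{p:GluingLemma}), and deduces the existence of a map $f:\vN^t_{[0,n]}(K)_\bO^\bI\to\colim G$ lifting $\Tam^n$ (Proposition~\ref{p:GluingDiagram}); since both horizontal composites are homotopic to identities, $F_n^K$ is a homotopy equivalence even though it fails to be injective.
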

\begin{proof}
	From \cite[Proposition 6.2.(1)]{Z-Perm} follows that the space $\vN_{[0,n]}(\square^{{\vee}\bn})_\bO^\bI$ is contractible for every $\bn\in\Seq(n)$. As a consequence, the map $A^K_n$ is a weak homotopy equivalence by \cite[Proposition 4.7]{Dugger}. The maps $Q^K_n$ and $F^K_n$ are homotopy equivalences by Proposition \ref{p:QIsHtpEq} and Proposition \ref{p:ColimNHEq}, respectively.
\end{proof}

As an immediate consequence, we obtain the main result of this paper.

\begin{thm}\label{t:Main}
	The functors
	\[
		\square\Set\bip\ni (K,\bO,\bI) \mapsto \vP(K)_\bO^\bI\in \hTop
	\]
	and
	\[
		\square\Set\bip\ni (K,\bO,\bI) \mapsto |\Ch(K)|\in \hTop
	\]
	are naturally equivalent. In particular, for every bi-pointed $\square$--set $K$, the spaces $\vP(K)_\bO^\bI$ and $\lvert \Ch(K)\rvert$ are homotopy equivalent.
\end{thm}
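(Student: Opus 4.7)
The plan is to chain together Theorem~\ref{t:Tame} and Theorem~\ref{t:ChN} using the length decomposition. By~(\ref{e:LengthDecomposition}) the functor $K\mapsto \vP(K)_\bO^\bI$ splits as the disjoint union $\coprod_{n\geq 0}\vP(K;n)_\bO^\bI$, and by definition $|\Ch(K)|=\coprod_{n\geq 0}|\Ch(K;n)|$; hence it suffices to produce a natural isomorphism $\vP(K;n)_\bO^\bI\cong |\Ch(K;n)|$ in $\hTop$ for each fixed $n\geq 0$. An affine reparametrization $t\mapsto nt$ of the domain identifies $\vP_{[0,1]}(K;n)_\bO^\bI$ with $\vP_{[0,n]}(K;n)_\bO^\bI$ naturally in $K$, so I will work with the latter space, which is the one used in Sections~3--6.

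The first step invokes Corollary~\ref{c:Tam}: the map
\[
	\Tam^K_n:\vP_{[0,n]}(K;n)_\bO^\bI \longrightarrow \vN^t_{[0,n]}(K)_\bO^\bI
\]
is a natural transformation of functors $\square\Set\bip\to \Top$ and a pointwise homotopy equivalence, hence a natural isomorphism of the corresponding functors to $\hTop$. The second step invokes Theorem~\ref{t:ChN}, which provides the zigzag~(\ref{e:MainSequence}) of weak homotopy equivalences
\[
	|\Ch(K;n)|\xleftarrow{A^K_n}\hocolim_\bc\vN(\square^{{\vee}\bn^\bc})_\bO^\bI\xrightarrow{Q^K_n}\colim_\bc\vN(\square^{{\vee}\bn^\bc})_\bO^\bI\xrightarrow{F^K_n}\vN^t_{[0,n]}(K)_\bO^\bI.
\]
Each arrow is, by construction, natural in $K$: the indexing category $\Ch(K;n)=\square\mathbf{Ch}_n\downarrow K$ is functorial in $K$, and $A^K_n$, $Q^K_n$, $F^K_n$ are built from this functoriality together with the forgetful functor $\dom$. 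In $\hTop$, where every weak equivalence becomes an isomorphism, the composite $F^K_n\circ Q^K_n\circ (A^K_n)^{-1}$ defines a natural isomorphism $|\Ch(K;n)|\cong \vN^t_{[0,n]}(K)_\bO^\bI$. Composing with the inverse of $\Tam^K_n$ and taking the coproduct over $n$ yields the desired natural equivalence of functors $\square\Set\bip\to\hTop$.

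The main obstacle is purely bookkeeping: I must verify that naturality in $K$ genuinely survives throughout the zigzag. Two observations suffice. First, the auxiliary d-map $R:\overrightarrow{[0,n]}\times \vI\to\vI$ used to define $\bar R^K$ is chosen once, independently of $K$, so $\bar R^K$, the naturalizations $\nat^K_n$, and hence $\Tam^K_n$ are honest natural transformations. Second, the functor $\bc\mapsto \vN(\square^{{\vee}\bn^\bc})_\bO^\bI$ over $\Ch(K;n)$ factors through $\dom$ and is therefore independent of the map to $K$, so a morphism $K\to K'$ induces morphisms on both $\hocolim$ and $\colim$ intertwined with $A$, $Q$, and $F$. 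Given these, the formal inversion in $\hTop$ carries no further topological content, and the theorem follows immediately from Theorems~\ref{t:Tame} and~\ref{t:ChN}.
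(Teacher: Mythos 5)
Your argument is correct and coincides with the paper's own proof of Theorem~\ref{t:Main}: both decompose by length via~(\ref{e:LengthDecomposition}), apply Theorem~\ref{t:ChN} to replace $|\Ch(K;n)|$ by $\vN^t_{[0,n]}(K)_\bO^\bI$, apply Theorem~\ref{t:Tame} (you via $\Tam^K_n$, the paper via the inclusion of which $\Tam^K_n$ is the homotopy inverse) to pass to $\vP(K;n)_\bO^\bI$, and observe that all maps in this zigzag are natural in $K$. You simply spell out the naturality bookkeeping in more detail than the paper, which records it in one line.
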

\begin{proof}
	For every $K\in\square\Set\bip$, there is a sequence of maps
	\[
		\lvert\Ch(K)\rvert
			=
		\coprod_{n\geq 0} \lvert\Ch(K;n)\rvert
			\underset{\text{Thm.\ref{t:ChN}}}{\xrightarrow{\hspace*{1em}\simeq\hspace*{1em}}}
		\coprod_{n\geq 0} \vN^t_{[0,n]}(K;n)_\bO^\bI 
			\underset{\text{Thm.\ref{t:Tame}}}{\xrightarrow{\hspace*{1em}\subseteq\hspace*{1em}}}
		 \coprod_{n\geq 0} \vP(K;n)_\bO^\bI
		 	\underset{\text{(\ref{e:LengthDecomposition})}}=
		  \vP(K)_\bO^\bI
	\]
	which are all functorial weak homotopy equivalences.
\end{proof}

\section{Natural tame presentations}

In this section we study properties of presentations of natural tame paths on an arbitrary bi-pointed $\square$--set $K$. Fix an integer $n\geq 0$, which will be the length of all d-paths considered here.

If
\begin{equation}\label{e:NatTamPres}
	\alpha=\overset{t_0}{\phantom{*}}[c_1;\beta_1]\overset{t_1}{*} [c_2;\beta_2]\overset{t_2}{*}\dots\overset{t_{l-1}}{*}[c_{l};\beta_{l}]\overset{t_{l}}{\phantom{*}},
\end{equation}
is a tame presentation of a tame natural d-path $\alpha\in \vN^t_{[0,n]}(K)_{\bO}^{\bI}$, then $\bc=(c_1,\dots,c_l)$ is a cube chain in $K$, and
\begin{equation}
	\beta=\beta_1*\beta_2*\dots*\beta_l\in \vN_{[0,n]}(\vI^{{\vee}\bn^\bc})_\bO^\bI
\end{equation}
is a natural d-path such that $\alpha=[\bc;\beta]=|\bc|\circ \beta$. For every pair $(\bc,\beta)$  such that $\bc\in\Ch(K;n)$ and $\beta\in\vN_{[0,n]}(K)_\bO^\bI$ we can associate the presentation (\ref{e:NatTamPres}) when we take $\beta_i=\beta|_{[t^\bc_{i-1},t^\bc_i]}$, regarded as a natural d-path in $\vI^{n^\bc_i}\subseteq \vI^{{\vee}\bn^\bc}$. This is construction is correct: since $\beta(t^\bc_{i})=v^{\bn^\bc}_i$, the whole segment $\beta_i$ lies in the $i$--th cube of the wedge cube $\vI^{{\vee}\bn^\bc}$. This identifies tame presentations of natural paths having length $n$ with elements of $\coprod_{\bc\in\Ch(K;n)} \vN_{[0,n]}(\vI^{{\vee}\bn^\bc})_\bO^\bI$ that are mapped into $\alpha$ by the composition
\begin{equation}
	\coprod_{\bc\in\Ch(K;n)} \vN_{[0,n]}(\vI^{{\vee}\bn^\bc})_\bO^\bI \to \Lcolim_{\bc\in\Ch(K;n)} \vN_{[0,n]}(\vI^{{\vee}\bn^\bc})_\bO^\bI \xrightarrow{F^n_K} \vN^t_{[0,n]}(K)_{\bO}^{\bI}.
\end{equation}
Thus, elements of $\coprod_{\bc\in \Ch(K;n)} \vN_{[0,n]}(\vI^{\bn^\bc})_\bO^\bI$ will be also called natural tame presentations, or nt-presentations for short.

For any d-path $\alpha\in \vP_{[a,b]}(K)$ denote
\begin{equation}
	\Vertic(\alpha)=\alpha^{-1}(|K_{(0)}|)= \{t\in [a,b]\;|\; \text{$\alpha(t)$ is a vertex}\}.
\end{equation}
Note that if $\alpha\in \vN_{[0,n]}(K)_{\bO}^\bI$, then $\Vertic(\alpha)\subseteq \{0,1,\dots,n\}$. If additionally $\alpha$ admits a natural tame presentation $\alpha=[\bc;\beta]$, then $ \Vertic(\bn^\bc) \subseteq \Vertic(\alpha)$.

\begin{df}
	A natural tame presentation $\alpha=[\bc;\beta]$ is
	\begin{enumerate}[\normalfont (a)]
	\item{
		\emph{minimal} if $\Vertic(\alpha)=\Vertic(\bn^\bc)$,
	}
	\item{
		\emph{regular} if for every $i\in\{1,\dots,l(\bc)\}$ there exists $t\in[t^\bc_{i-1},t^\bc_{i}]$ such that $\beta_i(t)\in (0,1)^{n^\bc_i}$,
	}
	\item{
		\emph{equivalent}	 to a natural tame presentation $\alpha=[\bc';\beta']$ if both $(\bc,\beta)$ and $(\bc',\beta')$ represent the same element in $\colim_{\bc\in\Ch(K;n)} \vN_{[0,n]}(\vI^{{\vee}\bn^\bc})_\bO^\bI$.
	}
	\end{enumerate}
\end{df}
Equivalently, $\alpha=[\bc;\beta]$ is not regular if $\beta_i\in\vN_{[t^\bc_{i-1},t^\bc_{i}]}(\partial\vI^{n^\bc_i})$ for some $i\in\{1,\dots,l(\bc)\}$.

The set of equivalence classes of nt-presentations of $\alpha$ is just $(F^n_K)^{-1}(\alpha)$. The example below shows that there may exist non-equivalent nt-presentations of given natural tame path.
\begin{exa}\label{x:TameExotic}
	Let $K=\square^3 \cup_{\partial\square^3} \square^3$ be the union of two standard $3$--cubes, glued along their boundaries. Denote the 3--cubes of $K$ by $c$ and $c'$ and choose a d-path $\alpha\in\vN(\partial\vI^3)_{\bO}^\bI$ that is not tame. But $\alpha$ regarded as a d-path in $K$ is tame, and has two different tame presentations: $[c;\alpha]$ and $[c';\alpha]$, which are not equivalent.
\end{exa}

\begin{prp}\label{p:AddingVerticesToNt}
	Assume that $\alpha=[\bc;\beta]$ is an nt-presentation. Assume that $k\in \Vertic(\alpha)\setminus\Vertic(\bn^\bc)$. Then there exists an nt-presentation $\alpha=[\bc';\beta']$ that is equivalent to $[\bc;\beta]$ such that $\Vertic(\bn^{\bc'})=\Vertic(\bn^\bc)\cup \{k\}$.
\end{prp}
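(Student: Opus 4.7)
The plan is to split the $i$--th cube of $\bc$ at the vertex $\beta(k)$ by means of a morphism of the form $\delta_{i,A,B}$ from (\ref{e:DeltaWedgeCubeMap}).

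First I would identify the unique index $i$ with $t^\bc_{i-1}<k<t^\bc_i$, write $\beta_i:=\beta|_{[t^\bc_{i-1},t^\bc_i]}\in\vN_{[t^\bc_{i-1},t^\bc_i]}(\vI^{n^\bc_i})_\bO^\bI$, and show that $\beta_i(k)$ is a corner of the cube $\vI^{n^\bc_i}$. This is the one non-routine step: since $\alpha(k)=[c_i;\beta_i(k)]$ is a vertex of $K$, uniqueness of the canonical presentation (Section~2) forces every coordinate of $\beta_i(k)$ to lie in $\{0,1\}$, for otherwise the carrier of $\alpha(k)$ would have positive dimension.

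Then I would set
\[
	A=\{j\;|\;\beta_i^j(k)=1\},\qquad B=\{j\;|\;\beta_i^j(k)=0\},
\]
giving a partition $A\cupdot B=\{1,\dots,n^\bc_i\}$. Naturality of $\beta$ yields $|\beta_i(k)|=k-t^\bc_{i-1}$, whence $|A|=k-t^\bc_{i-1}>0$ and $|B|=t^\bc_i-k>0$. Take $\bc':=\bc\circ\delta_{i,A,B}$; its type $\bn^{\bc'}$ replaces $n^\bc_i$ by the pair $(|A|,|B|)$ in $\bn^\bc$, so the new intermediate vertex sits at $t^\bc_{i-1}+|A|=k$ and $\Vertic(\bn^{\bc'})=\Vertic(\bn^\bc)\cup\{k\}$.

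To construct $\beta'$, keep $\beta$ unchanged outside $[t^\bc_{i-1},t^\bc_i]$. On $[t^\bc_{i-1},k]$, monotonicity combined with $\beta_i^j(k)=0$ for $j\in B$ forces every $B$--coordinate of $\beta_i$ to be identically $0$, so $\beta_i|_{[t^\bc_{i-1},k]}=\delta^0_B\circ\beta'_{i,1}$ for a unique $\beta'_{i,1}\in\vN_{[t^\bc_{i-1},k]}(\vI^{|A|})_\bO^\bI$. Symmetrically, on $[k,t^\bc_i]$ every $A$--coordinate is constantly $1$ and $\beta_i|_{[k,t^\bc_i]}=\delta^1_A\circ\beta'_{i,2}$ for a unique natural path $\beta'_{i,2}$ in $\vI^{|B|}$. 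I would glue $\beta'_{i,1}$ and $\beta'_{i,2}$ into the new $i$--th and $(i{+}1)$--st summands of $\vI^{{\vee}\bn^{\bc'}}$; compatibility at the new wedge vertex is automatic because $\beta'_{i,1}(k)=\bI$ and $\beta'_{i,2}(k)=\bO$ are identified. Naturality of the two pieces is immediate since discarding coordinates that are identically $0$ or $1$ does not change the $L^1$--length.

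By construction $|\delta_{i,A,B}|\circ\beta'=\beta$, so the arrow $\delta_{i,A,B}:\bc'\to\bc$ in $\Ch(K;n)$ sends the class of $(\bc',\beta')$ to that of $(\bc,\beta)$ in $\colim_{\bc\in\Ch(K;n)}\vN_{[0,n]}(\vI^{{\vee}\bn^\bc})_\bO^\bI$. In particular $\alpha=[\bc';\beta']$ and the two presentations are equivalent, as required.
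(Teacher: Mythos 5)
Your proof is correct and follows essentially the same route as the paper's: split the cube at the vertex $\beta_i(k)$ using $\delta_{i,A,B}$ with $A=\{j:\beta_i^j(k)=1\}$, $B=\{j:\beta_i^j(k)=0\}$, and observe that the constancy of the $B$-coordinates on $[t^\bc_{i-1},k]$ and of the $A$-coordinates on $[k,t^\bc_i]$ yields the two factors $\gamma,\gamma'$ making $|\delta_{i,A,B}|\circ\beta'=\beta$. You are in fact a bit more careful than the paper on two minor points worth noting: you justify that $\beta_i(k)\in\{0,1\}^{n^\bc_i}$ via uniqueness of canonical presentations, and you use naturality to verify $|A|=k-t^\bc_{i-1}>0$ and $|B|=t^\bc_i-k>0$, which is needed for $\delta_{i,A,B}$ to be defined; the paper leaves both implicit.
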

\begin{proof}
	Let $i\in\{1,\dots,l(\bc)\}$ be an integer such that $t^{\bc}_{i-1}<k<t^{\bc}_i$. We have $\beta_i(k)\in\{0,1\}^{n^\bc_i}$; denote
	\[
		A=\{j\in\{1,\dots,n^\bc_i\}\;|\; \beta_i^j(k)=1\},\quad B=\{j\in\{1,\dots,n^\bc_i\}\;|\; \beta_i^j(k)=0\}.
	\]
	Obviously $\beta^j_i|_{[t^\bc_{i-1},k]}\equiv 0$ for all $j\in B$ and  $\beta^j_i|_{[k,t^\bc_{i}]}\equiv 1$ for all $j\in A$. Therefore, there exist unique paths $\gamma\in \vN_{[t^\bc_{i-1},k]}(\vI^{|A|})_\bO^\bI$, $\gamma'\in \vN_{[k,t^\bc_{i}]}(\vI^{|B|})_\bO^\bI$ such that $\beta_i|_{[t^\bc_{i-1},k]}= \delta^0_B(\gamma)$, $\beta_i|_{[k,t^\bc_{i}]}= \delta^1_A(\gamma')$. As a consequence,
	\[
		[c_i;\beta_i]=[c_i;\delta^0_B(\gamma)]*[c_i;\delta^1_A(\gamma')]=[d^0_B(c_i);\gamma]*[d^1_A(c_i);\gamma'].
	\]
	Let
	\begin{align*}
		\bc'&=(c_1,\dots,c_{i-1},d^0_B(c_i),d^1_A(c_i),c_{i+1},\dots,c_{l(\bc)})\in \Ch(K;n),\\
		\beta'&=\beta_1*\dots*\beta_{i-1}*\gamma*\gamma'*\beta_{i+1}*\dots*\beta_{l(\bc)}\in \vN_{[0,n]}(\vI^{{\vee}\bn^{\bc'}})_\bO^\bI.
	\end{align*}
	We have $d_{i,A,B}(\bc)=\bc'$. The map
	\[
		\vN_{[0,n]}(\delta_{i,A,B})_\bO^\bI: \vN_{[0,n]}(\vI^{{\vee}\bn^{\bc'}}) \to \vN_{[0,n]}(\vI^{{\vee}\bn^{\bc}})
	\]
	induced by the morphism $\delta_{i,A,B}:\bc'\to \bc$ sends $\beta$ into $\beta'$. As a consequence, the presentations $[\bc;\beta]$ and $[\bc',\beta']$ are equivalent. The condition $\Vertic(\bn^{\bc'})=\Vertic(\bn^\bc)\cup \{k\}$ follows immediately from the definition of $\bc'$.
\end{proof}

\begin{prp}\label{p:ExistenceOfMinimalPresentation}
	Every natural tame presentation is equivalent to a minimal natural tame presentation.
\end{prp}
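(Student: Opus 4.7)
The plan is to apply Proposition \ref{p:AddingVerticesToNt} iteratively, adding the vertices of $\Vertic(\alpha)$ to $\Vertic(\bn^\bc)$ one at a time until equality is reached. Given an nt-presentation $\alpha=[\bc;\beta]$, the always-valid inclusion $\Vertic(\bn^\bc)\subseteq\Vertic(\alpha)$ noted just before Definition~\ref{x:TameExotic} reduces minimality to promoting this containment to an equality. Setting $S:=\Vertic(\alpha)\setminus\Vertic(\bn^\bc)$, the containment $\Vertic(\alpha)\subseteq\{0,1,\dots,n\}$ ensures that $S$ is finite, so an iterative enlargement of $\Vertic(\bn^\bc)$ is guaranteed to terminate.

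I would argue by induction on $|S|$. When $S=\emptyset$, the given presentation is already minimal and there is nothing to do. When $|S|>0$, pick any $k\in S$; Proposition \ref{p:AddingVerticesToNt} produces an equivalent nt-presentation $\alpha=[\bc';\beta']$ with $\Vertic(\bn^{\bc'})=\Vertic(\bn^\bc)\cup\{k\}$. Since the underlying d-path $\alpha$ has not changed, $\Vertic(\alpha)$ is unchanged, so $|\Vertic(\alpha)\setminus\Vertic(\bn^{\bc'})|=|S|-1$. The inductive hypothesis applied to $[\bc';\beta']$ yields an equivalent minimal nt-presentation. Since equivalence of nt-presentations is by definition equality in $\colim_{\bc\in\Ch(K;n)}\vN_{[0,n]}(\vI^{{\vee}\bn^\bc})_\bO^\bI$, it is transitive; composing the two equivalences finishes the argument.

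I do not anticipate any real obstacle here: all of the genuine combinatorial and geometric content (reading off the sets $A,B$ from $\beta_i(k)\in\{0,1\}^{n^\bc_i}$, factoring the segment through the face maps $d^0_B, d^1_A$, and verifying the factorization through $\delta_{i,A,B}$) has already been packaged into Proposition \ref{p:AddingVerticesToNt}. What remains is simply the observation that the obstruction $S$ strictly decreases at each application and is finite to begin with, so the induction is well-founded.
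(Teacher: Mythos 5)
Your argument is correct and matches the paper's proof, which simply states that the result follows by induction from Proposition \ref{p:AddingVerticesToNt}. You have just spelled out the induction (finiteness of $S=\Vertic(\alpha)\setminus\Vertic(\bn^\bc)$, strict decrease at each step, transitivity of equivalence) that the paper leaves implicit.
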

\begin{proof}
	This follows by induction from Proposition \ref{p:AddingVerticesToNt}.
\end{proof}

\begin{prp}\label{p:RegularIsMinimal}
	Every regular nt-presentation is minimal.
\end{prp}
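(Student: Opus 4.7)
The containment $\Vertic(\bn^\bc)\subseteq\Vertic(\alpha)$ always holds, so my plan is to prove the reverse inclusion $\Vertic(\alpha)\subseteq\Vertic(\bn^\bc)$ under the regularity hypothesis. I would argue by contradiction: assume there is some $k\in\Vertic(\alpha)\setminus\Vertic(\bn^\bc)$, and let $i$ be the unique index with $t^\bc_{i-1}<k<t^\bc_i$. Since $\alpha(k)=[c_i;\beta_i(k)]$ is a vertex of $|K|$ and $n^\bc_i\geq 1$ (because $\bn^\bc\in\Seq(n)$), the uniqueness of canonical presentations will force $\beta_i(k)\in\{0,1\}^{n^\bc_i}$: otherwise the canonical presentation of the vertex $\alpha(k)$ would be supported on a cube of positive dimension.

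The key input is then coordinate-wise monotonicity of $\beta_i$ combined with the naturality of $\beta$. Writing $J_0=\{j:\beta_i^j(k)=0\}$ and $J_1=\{j:\beta_i^j(k)=1\}$, monotonicity together with the boundary conditions $\beta_i(t^\bc_{i-1})=\bO$ and $\beta_i(t^\bc_i)=\bI$ from the tame presentation forces $\beta_i^j\equiv 0$ on $[t^\bc_{i-1},k]$ for every $j\in J_0$ and $\beta_i^j\equiv 1$ on $[k,t^\bc_i]$ for every $j\in J_1$. Regularity supplies a point $t^*\in[t^\bc_{i-1},t^\bc_i]$ with $\beta_i(t^*)\in(0,1)^{n^\bc_i}$. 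If $t^*\leq k$, monotonicity yields $\beta_i^j(k)\geq\beta_i^j(t^*)>0$ for every $j$, so $J_0=\emptyset$, $J_1=\{1,\dots,n^\bc_i\}$, and hence $\beta_i\equiv\bI$ on $[k,t^\bc_i]$. If $t^*\geq k$, the symmetric argument shows $\beta_i\equiv\bO$ on $[t^\bc_{i-1},k]$. Either way, $\beta_i$ is constant on a non-degenerate subinterval $[c,d]$, while the naturality of $\beta_i$ gives $\len(\beta_i|_{[c,d]})=d-c>0$, a contradiction.

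Since the argument is short and uses only coordinate-wise monotonicity of d-paths in $\vI^{n^\bc_i}$ together with naturality, I do not anticipate a substantive obstacle. The one mildly delicate point is the boundary case $t^*=k$, but it is settled immediately because $\beta_i(k)$ cannot lie simultaneously in $\{0,1\}^{n^\bc_i}$ and in $(0,1)^{n^\bc_i}$; so this case is in fact the easiest of all.
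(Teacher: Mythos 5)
Your argument is correct and takes essentially the same route as the paper's: regularity supplies a time $t^*$ where $\beta_i$ lies in the open cube, and coordinate-wise monotonicity together with naturality (a constant segment has length zero) rules out any interior vertex of $\beta_i$, so $\Vertic(\beta_i)=\{t^\bc_{i-1},t^\bc_i\}$. The paper states this implication in a single line and concludes via $\Vertic(\alpha)=\bigcup_i\Vertic(\beta_i)$, whereas you unpack the same reasoning in detail; there is no substantive difference.
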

\begin{proof}
	Assume that $[\bc,\beta]$ is a regular nt-presentation. Then, for every $i\in\{1,\dots,l(\bc)\}$ there exists $t^{\bn^\bc}_{i-1}<t<t^{\bn^\bc}_{i}$ such that $\beta_i(t)\in (0,1)^{\dim(c_i)}$, which implies that $\Vertic(\beta_i)=\{t^{\bn^\bc}_{i-1}, t^{\bn^\bc}_i\}$. Since $\Vertic(\alpha)=\bigcup_i\Vertic(\beta_i)$, the presentation $[\bc;\beta]$ is minimal.
\end{proof}

A d-path $\alpha\in\vN^t_{[0,n]}(K)_\bO^\bI$ is \emph{regular} if it admits a regular presentation. To prove that all nt-presentations of regular d-paths are equivalent, we need the following.

\begin{lem}\label{l:CubePathPresentations}
	Let $\alpha,\beta\in \vP_{[a,b]}(\vI^m)$ be d-paths such that:
	\begin{enumerate}[\normalfont (a)]
	\item{$\alpha(a)=\beta(a)=\bO$,}
	\item{$\alpha(b)=\beta(b)\not\in \partial\vI^n$, i.e., $0<\alpha^j(b)=\beta^j(b)<1$ for all $j$,}
	\item{for every $t\in[a,b]$, the sequences $(\alpha^1(t),\dots,\alpha^m(t))$ and $(\beta^1(t),\dots,\beta^m(t))$ are equal after removing all $0$'s.}
	\end{enumerate}
	Then $\alpha=\beta$.
\end{lem}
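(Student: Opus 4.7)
The plan is to induct on the dimension $m$, using the activation times of individual coordinates to peel off a right-tail on which $\alpha$ and $\beta$ already agree, and then recurse on the complementary coordinates over the left subinterval. For each $j$ I would introduce $s^\alpha_j:=\sup\{t\in[a,b]:\alpha^j(t)=0\}$ and $s^\beta_j$ analogously; since $\alpha^j$ and $\beta^j$ are continuous, non-decreasing, vanish at $a$, and are strictly positive at $b$, these activation times lie in $[a,b)$, and each coordinate vanishes on $[a,s^\alpha_j]$ and is strictly positive on $(s^\alpha_j,b]$ (and similarly for $\beta$). The immediate consequence of hypothesis (c) that I plan to use repeatedly is that the number of positive coordinates at any given time $t$ is the same for $\alpha$ and for $\beta$.

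The first step is to synchronise the largest activation times. I set $t^*:=\max_j s^\alpha_j$ and argue by contradiction that $t^*=\max_j s^\beta_j$: if $t^*<\max_j s^\beta_j$, then for $t$ strictly between these two maxima every coordinate of $\alpha$ would be strictly positive while some coordinate of $\beta$ would still vanish, contradicting the cardinality identity extracted from (c). On $(t^*,b]$ every coordinate of both paths is then strictly positive, so (c) applied index by index (with no zeros to discard) yields $\alpha^j(t)=\beta^j(t)$ for every $j$, and continuity extends this equality to $[t^*,b]$.

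Next I set $Z^*:=\{j:\alpha^j(t^*)=0\}$; by the previous step this equals $\{j:\beta^j(t^*)=0\}$, and it is non-empty (when $m\ge 1$) because some index realises the maximum $t^*$. For $j\in Z^*$, monotonicity together with $\alpha^j(a)=\alpha^j(t^*)=0$ forces $\alpha^j\equiv 0\equiv\beta^j$ on $[a,t^*]$. Projecting away the coordinates in $Z^*$ yields paths $\alpha',\beta':[a,t^*]\to\vI^{m-|Z^*|}$ starting at $\bO$ and ending at a common point whose coordinates all lie in $(0,1)$: strictly positive because I removed precisely the vanishing ones, and strictly below $1$ because $\alpha^j(t^*)\le\alpha^j(b)<1$. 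Since the dropped coordinates vanish throughout $[a,t^*]$, the non-zero subsequences of $\alpha'(t)$ and $\alpha(t)$ coincide (similarly for $\beta$), so (c) is inherited. The inductive hypothesis in dimension $m-|Z^*|<m$ then gives $\alpha'=\beta'$ on $[a,t^*]$, which together with the equality already shown on $[t^*,b]$ completes the induction; the base case $m=0$ is trivial.

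The main obstacle is the synchronisation step, where hypotheses (b) and (c) interact most delicately: without $\alpha^j(b)<1$ the restricted endpoint $\alpha'(t^*)$ could lie on the boundary of the smaller cube and the induction would fail, and without (c) there would be nothing to force $\max_j s^\alpha_j=\max_j s^\beta_j$. After these two points are settled, the rest of the argument is a mechanical restriction to a sub-cube of strictly smaller dimension.
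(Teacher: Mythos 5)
Your proof is correct and follows essentially the same strategy as the paper's: induct on $m$, locate the last time $s$ (your $t^*$) at which some coordinate of $\alpha$ vanishes, observe via (c) that $\alpha$ and $\beta$ agree on $(s,b]$ (hence on $[s,b]$ by continuity), project away the coordinates that vanish identically on $[a,s]$, and recurse. You spell out two points the paper leaves implicit — that $\max_j s^\alpha_j=\max_j s^\beta_j$, and that the restricted endpoint stays off $\partial\vI^{m-|Z^*|}$ so that hypothesis (b) is inherited — but the underlying argument is the same.
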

\begin{proof}
	Induction with respect to $m$. For $m=1$ this is obvious.
	Let $s\in[a,b]$ be the maximal number such that $\alpha^j(s)=0$ for some $j\in\{1,\dots,n\}$. Condition (c) implies that  $\alpha(t)=\beta(t)$ for $t>s$, since there are no $0$'s to remove. By continuity also $\alpha(s)=\beta(s)$. If $s=0$, then the lemma is proven. Assume $s>0$ and let $A=\{j\;|\; \alpha^j(s)=\beta^j(s)=0\}$. Let $\alpha',\beta'\in \vP_{[a,s]}(\vI^{m-|A|})$ be the paths obtained from $\alpha|_{[a,s]}$ and $\beta|_{[a,s]}$ by removing all coordinates belonging to $A$. By the inductive hypothesis, we have $\alpha'=\beta'$ and, therefore, $\alpha|_{[a,s]}=\beta|_{[a,s]}$.
\end{proof}

\begin{prp}\label{p:RegularNTAreEquivalent}
	Every regular natural tame d-path $\alpha\in\vN^t_{[0,n]}(K)_\bO^\bI$ admits a unique minimal nt-presentation, which is regular. As a consequence, all nt-presentations of $\alpha$ are equivalent.
\end{prp}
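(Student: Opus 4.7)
The plan is to reduce the proposition to establishing that minimal nt-presentations of $\alpha$ are unique on the nose (as elements of $\coprod_{\bc} \vN_{[0,n]}(\vI^{{\vee}\bn^\bc})_\bO^\bI$, not merely up to equivalence). Once this uniqueness is proved, the proposition follows automatically: a regular nt-presentation exists by hypothesis and is minimal by Proposition~\ref{p:RegularIsMinimal}, so it \emph{is} the unique minimal presentation; and Proposition~\ref{p:ExistenceOfMinimalPresentation} guarantees that every nt-presentation is equivalent to a minimal one, hence to this unique regular one.

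For the uniqueness, fix a regular nt-presentation $[\bd;\gamma]$ and any other minimal nt-presentation $[\bc;\beta]$. Minimality forces $\Vertic(\bn^\bc)=\Vertic(\alpha)=\Vertic(\bn^\bd)$, hence $\bn^\bc=\bn^\bd=:\bn$. Fix $i\in\{1,\dots,l(\bn)\}$ and choose, by regularity, $t^{*}\in (t^\bn_{i-1},t^\bn_i)$ with $\gamma_i(t^{*})\in (0,1)^{n_i}$. Then $[d_i;\gamma_i(t^{*})]$ is already the canonical presentation of $\alpha(t^{*})$, so its carrier has dimension $n_i$. Since $[c_i;\beta_i(t^{*})]$ represents the same point and the dimension of its canonical carrier is $n_i-|\{j:\beta_i^j(t^{*})\in\{0,1\}\}|$, we must have $\beta_i(t^{*})\in (0,1)^{n_i}$; uniqueness of the canonical presentation then yields $c_i=d_i$ and $\beta_i(t^{*})=\gamma_i(t^{*})$.

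It remains to promote this pointwise equality at $t^{*}$ to $\beta_i=\gamma_i$ on the whole interval $[t^\bn_{i-1},t^\bn_i]$, and this is where Lemma~\ref{l:CubePathPresentations} is designed to be used. On $[t^\bn_{i-1},t^{*}]$, coordinatewise monotonicity of $\beta_i$ and $\gamma_i$ together with $\beta_i^j(t^{*}),\gamma_i^j(t^{*})<1$ ensure that no coordinate of $\beta_i(s)$ or $\gamma_i(s)$ equals $1$ for $s\le t^{*}$; hence the canonical presentation of $\alpha(s)=[c_i;\beta_i(s)]=[c_i;\gamma_i(s)]$ is obtained in both cases simply by deleting the zero coordinates, and its uniqueness forces the resulting ordered tuples of non-zero coordinates of $\beta_i(s)$ and $\gamma_i(s)$ to coincide. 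This is precisely hypothesis~(c) of Lemma~\ref{l:CubePathPresentations}; together with $\beta_i(t^\bn_{i-1})=\gamma_i(t^\bn_{i-1})=\bO$ and $\beta_i(t^{*})=\gamma_i(t^{*})\in(0,1)^{n_i}$, the lemma yields $\beta_i=\gamma_i$ on $[t^\bn_{i-1},t^{*}]$. Equality on $[t^{*},t^\bn_i]$ is obtained by applying Lemma~\ref{l:CubePathPresentations} to the reflected d-paths $u\mapsto \bI-\beta_i(t^\bn_i+t^{*}-u)$ and $u\mapsto \bI-\gamma_i(t^\bn_i+t^{*}-u)$, for which the roles of $0$ and $1$ are swapped.

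The main obstacle is the careful extraction of hypothesis~(c) of Lemma~\ref{l:CubePathPresentations} from the uniqueness of canonical presentations. The delicate point is that $[c_i;\bx]=[c_i;\by]$ in $|K|$ does \emph{not} force $\bx=\by$ in $\vI^{n_i}$, since two distinct faces of $c_i$ may coincide in $K$; what uniqueness of canonical presentations actually gives is the equality of the ordered sequences of non-boundary coordinates, and one must restrict to a time subinterval where ``boundary coordinate'' unambiguously means ``zero coordinate'' (or, after the reflection trick, ``one coordinate''). This is why the splitting at the regular time $t^{*}$, rather than a global application of Lemma~\ref{l:CubePathPresentations}, is essential.
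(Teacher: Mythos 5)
Your proof is correct and follows essentially the same route as the paper: fix one regular presentation and one minimal presentation, match types via Proposition~\ref{p:RegularIsMinimal}, use a regular time $t^*$ and uniqueness of canonical presentations to pin down $c_i=d_i$ and $\beta_i(t^*)=\gamma_i(t^*)$, then invoke Lemma~\ref{l:CubePathPresentations} forward on $[t^\bn_{i-1},t^*]$ and (by the $0\leftrightarrow 1$ reflection) backward on $[t^*,t^\bn_i]$. The only genuine difference is that you spell out how hypothesis~(c) of Lemma~\ref{l:CubePathPresentations} is extracted from uniqueness of canonical presentations (including the caveat that distinct faces of $c_i$ may be identified in $K$, so one cannot conclude $\beta_i(s)=\gamma_i(s)$ coordinate-by-coordinate directly), a verification the paper leaves implicit; this is a welcome clarification rather than a change of method.
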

\begin{proof}
	Let $\alpha=[\bc;\beta]$ be a regular nt-presentation and let $\alpha=[\bc';\beta']$ be a minimal presentation. By Proposition \ref{p:RegularIsMinimal}, we have $\Vertic(\bn^\bc)=\Vertic(\alpha)=\Vertic(\bn^{\bc'})$ and hence $\bn^\bc=\bn^{\bc'}$. Fix $i\in\{1,\dots,l(\bc)=l(\bc')\}$ and choose $t\in [t^\bc_{i-1},t^\bc_i]$ such that $\beta_i(t)\in \vI^{n^\bc_i}\setminus \partial\vI^{n^\bc_i}$. Now $[c_i;\beta_i(t)]$ is a canonical presentation of the point $\alpha(i)$. Since $\dim(c'_i)=\dim(c_i)$, also $[c'_i,\beta'_i(t)]$ is a canonical presentation of $\alpha(t)$, which implies that $c_i=c'_i$ and $\beta_i(t)=\beta'_i(t)$. Applying Lemma \ref{l:CubePathPresentations} we obtain that $\beta_i|_{[t^\bc_{i-1},t]}=\beta'_i|_{[t^\bc_{i-1},t]}$, and the ``opposite" analogue of this lemma implies that $\beta_i|_{[t, t^\bc_{i}]}=\beta'_i|_{[t, t^\bc_{i}]}$. As a consequence, $c_i=c'_i$ and $\beta_i=\beta'_i$ for all $i$, and hence $\bc=\bc'$ and $\beta=\beta'$. Thus, the minimal nt-presentation of $\alpha$ is unique and all nt-presentations of  $\alpha$ are equivalent to that.
\end{proof}

\begin{prp}\label{p:EquivalenceOfMinNTForCubes}
	Assume that $K\subseteq \square^n$. Then every natural tame path $\alpha\in \vN^t_{[0,n]}(K)_\bO^\bI$ admits a unique minimal presentation.
\end{prp}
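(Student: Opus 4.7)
The existence of a minimal presentation follows immediately from Proposition \ref{p:ExistenceOfMinimalPresentation} applied to any nt-presentation of $\alpha$; so the content is uniqueness. The plan is to exploit the ambient inclusion $|K|\hookrightarrow|\square^n|=\vI^n$ induced by $K\subseteq\square^n$. Under this embedding every d-path in $|K|$ acquires well-defined coordinate functions $\alpha^1,\dots,\alpha^n:[0,n]\to[0,1]$, and each vertex of $|K|$ corresponds to a $\{0,1\}$-vector in $\vI^n$.

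Suppose $\alpha=[\bc;\beta]=[\bc';\beta']$ are two minimal nt-presentations. Minimality forces $\Vertic(\bn^\bc)=\Vertic(\alpha)=\Vertic(\bn^{\bc'})$, hence the two types agree: $\bn^\bc=\bn^{\bc'}$. Write $\Vertic(\alpha)=\{0=s_0<s_1<\dots<s_l=n\}$ and fix $i\in\{1,\dots,l\}$. On $[s_{i-1},s_i]$ each coordinate $\alpha^j$ is non-decreasing between the values $\alpha^j(s_{i-1}),\alpha^j(s_i)\in\{0,1\}$, and naturality $\sum_j(\alpha^j(s_i)-\alpha^j(s_{i-1}))=s_i-s_{i-1}$ shows that the set
\[
	A_i=\{j\in\{1,\dots,n\}\;|\;\alpha^j(s_{i-1})=0,\;\alpha^j(s_i)=1\}
\]
has cardinality $s_i-s_{i-1}$; the same quantity equals $\dim(c_i)=\dim(c'_i)$ because $\beta_i,\beta'_i$ are natural d-paths from $\bO$ to $\bI$ in their respective cubes. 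In $\square^n$ there is exactly one cube of dimension $|A_i|$ whose initial and final vertices are $\alpha(s_{i-1})$ and $\alpha(s_i)$, namely the face $c^*_i$ with $*$-positions $A_i$. The images of $c_i$ and $c'_i$ under $K\hookrightarrow\square^n$ are cubes of $\square^n$ with the same dimension and boundary vertices, so both must equal $c^*_i$; by injectivity of $K\hookrightarrow\square^n$ on cubes, $c_i=c'_i$.

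With $c_i=c'_i$ fixed, the paths $\beta_i$ and $\beta'_i$ are both d-paths $[s_{i-1},s_i]\to\vI^{|A_i|}$ whose composition with the injective coordinate embedding $\vI^{|A_i|}\hookrightarrow\vI^n$ associated to $c^*_i$ equals $\alpha|_{[s_{i-1},s_i]}$; therefore $\beta_i=\beta'_i$. Hence $(\bc,\beta)=(\bc',\beta')$. The one substantive step is the cube identification in the previous paragraph, and this is precisely where the hypothesis $K\subseteq\square^n$ does its work: it forces distinct cubes of $K$ to have distinct coordinate realizations inside $\vI^n$, ruling out the type of failure seen in Example \ref{x:TameExotic}, where two 3-cubes of $K$ produced two inequivalent tame presentations of the same d-path.
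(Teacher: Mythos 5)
Your proof is correct and takes essentially the same route as the paper's: both presentations share the same type by minimality, each cube $c_i$ is pinned down by its extreme vertices $\alpha(s_{i-1})$ and $\alpha(s_i)$ once the ambient $\square^n$ is brought in, and injectivity of the realizations $\vI^{n_i}\hookrightarrow\vI^n$ then forces $\beta_i=\beta'_i$. Your coordinate-level computation of $A_i$ merely makes explicit what the paper's one-line appeal to ``same extreme vertices'' leaves implicit.
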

\begin{proof}
	Let $\alpha=[\bc,\beta]$ and $\alpha=[\bc',\beta']$ be minimal nt-presentations. We have $\bn:=\bn^\bc=\bn^{\bc'}$. For every $i\in\{1,\dots,l(\bc)=l(\bc')\}$ we have
	\begin{align*}
		[d^0(c'_i);()]=[c'_i;\bO]=[c'_i,\beta'_i(t^\bn_{i-1})]&=\alpha(t^\bn_{i-1})=[c_i,\beta_i(t^\bn_{i-1})]=[c_i,\bO]=[d^0(c_i);()]\\
		[d^1(c'_i);()]=[c'_i;\bI]=[c'_i,\beta'_i(t^\bn_i)]&=\alpha(t^\bn_i)=[c_i,\beta_i(t^\bn_i)]=[c_i,\bI]=[d^1(c_i);()].
	\end{align*}
	Thus, $c_i=c'_i$ since they have the same extreme vertices. Furthermore, all the maps $|c_i|:\vI^{n_i}\to \vI^n$ are injective, which shows that $\beta_i=\beta'_i$.
\end{proof}

\section{Comparison of homotopy colimit and colimit}

Fix a bi-pointed $\square$--set $K$ and an integer $n\geq 0$.
In this section we show that the map
\begin{equation}\label{e:HocolimColim}
	Q_n^K:\Lhocolim_{\bc\in \Ch(K;n)}{\vN(\square^{{\vee}\bn^\bc})_{\bO}^\bI}
		\to
	{\Lcolim_{\bc\in \Ch(K;n)}  {\vN(\square^{{\vee}\bn^\bc})_{\bO}^\bI}}
\end{equation}
is a weak homotopy equivalence. We will use the following criterion due to Dugger \cite{Dugger}. Let $\cC$ be an upwards-directed Reedy category \cite[Definition 13.6]{Dugger} and let $F:\cC\to\Top$ be a diagram. \emph{The latching object} of $c\in\Ob(\cC)$ is
\begin{equation}
	L_cF:= \colim_{\partial \cC\downarrow c} F\circ \dom,
\end{equation}
where $\partial \cC\downarrow c$ is the slice category over the object $c$ with the identity object $\id_\bc$ removed, and $\dom:\cC\downarrow c\to \cC$ is the forgetful functor. The latching object is equipped with \emph{the latching map} $\Lambda_c:L_cF \to F(c)$ that is induced by the cocone $\{F(\varphi)\}_{(\varphi:a\to c)\in\partial\cC\downarrow c}$.

\begin{prp}[{\cite[Proposition 14.2]{Dugger}}]\label{p:Dugger}
	Assume that the latching map $\Lambda_c$ is a cofibration for every object $c\in\Ob(\cC)$. Then F is projective--cofibrant and so $\hocolim_\cC F \to  \colim_\cC F$ is a weak equivalence.
\end{prp}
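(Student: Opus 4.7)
The plan is to prove the two assertions in sequence: first that $F$ is projective--cofibrant in $\Top^{\cC}$, and then that projective--cofibrancy implies the canonical comparison map $\hocolim_\cC F \to \colim_\cC F$ is a weak equivalence. The second implication is standard model-categorical machinery: in the projective model structure (whose fibrations and weak equivalences are detected objectwise) the functor $\colim$ is left Quillen, being left adjoint to the constant--diagram functor, and $\hocolim$ is by definition its total left derived functor. Consequently, if $QF \to F$ is a cofibrant replacement, the composite $\hocolim F \simeq \colim QF \to \colim F$ is a weak equivalence as soon as $F$ is itself projective--cofibrant.

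To establish projective cofibrancy of $F$, I would build it by transfinite induction on the degree function supplied by the upwards--directed Reedy structure on $\cC$. Writing $F_{<d}$ for the left Kan extension to all of $\cC$ of the restriction of $F$ to objects of degree strictly less than $d$, I would, for each object $c$ of degree $d$, form the pushout
\[
	\begin{diagram}
		\node{\cC(c,-)\otimes L_c F}
			\arrow{e}
			\arrow{s,l}{\cC(c,-)\otimes \Lambda_c}
		\node{F_{<d}}
			\arrow{s}
	\\
		\node{\cC(c,-)\otimes F(c)}
			\arrow{e}
		\node{F_{\leq d,c}}
	\end{diagram}
\]
in which the top map expresses the canonical cocone that identifies $L_c F$ as a subdiagram of $F_{<d}$ seen at $c$, and $\cC(c,-)\otimes(-):\Top\to\Top^{\cC}$ denotes the free left adjoint to evaluation at $c$. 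The upwards--directed hypothesis ensures that a morphism out of $c$ cannot land at an object of strictly smaller degree, so attaching at $c$ does not disturb previously assigned values. Taking the transfinite composition across all degrees recovers $F$.

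The hypothesis enters at the left vertical map: since evaluation at $c$ preserves fibrations and weak equivalences projectively, the free functor $\cC(c,-)\otimes(-)$ is left Quillen, and so $\cC(c,-)\otimes \Lambda_c$ is a projective cofibration whenever $\Lambda_c$ is one. Projective cofibrations are closed under pushouts and transfinite compositions, so the diagram assembled at the top of the tower is projective--cofibrant. The main obstacle I expect is the bookkeeping: one must correctly identify $L_c F$ inside $F_{<d}$ at each stage and verify that the pushout produces the intended value at $c$ while leaving earlier values intact. The upwards--directed Reedy hypothesis is precisely what closes this induction; without it one would be forced to interleave matching data in the opposite direction, which the simpler argument above avoids.
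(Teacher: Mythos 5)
The paper does not prove this statement; it is quoted verbatim from Dugger's \emph{A primer on homotopy colimits}, Proposition 14.2. Your proof is correct and matches Dugger's argument: the derived-functor step is Ken Brown's lemma for the left Quillen functor $\colim$ (left adjoint to the constant-diagram functor in the projective structure), and projective cofibrancy is established exactly by the latching-object cell attachment you describe, pushing out along $\cC(c,-)\otimes\Lambda_c$ degree by degree over the direct Reedy structure. The one substantive verification---which you correctly flag as the bookkeeping burden---is that the pushouts at degree $d$ carry the left Kan extension $F_{<d}$ to $F_{<d+1}$ at \emph{every} object, not merely at those of degree at most $d$; this follows from the pointwise colimit formula for left Kan extensions together with upwards-directedness, which both guarantees $F_{<d}(c)=L_cF$ for $\deg(c)=d$ and prevents the attachment from affecting lower-degree values.
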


The category $\Ch(K;n)$ admits a grading $\deg(\ba)=n-l(\ba)$, which makes it an upwards-directed Reedy category: for every non-identity morphism $\varphi:\ba\to \bb$ in $\Ch(K)$ we have $l(\ba)>l(\bb)$.

For every $\bc\in\Ch(K;n)$, the functor
\begin{equation}
	\Ch(\square^{{\vee}\bn^\bc})_\bO^\bI \ni (\square^{{\vee}\bn^\ba}\xrightarrow\ba \square^{{\vee}\bn^\bc})\mapsto
	\left(
		\begin{diagram}
			\node{\square^{{\vee}\bn^\ba}}
				\arrow{s,l}{\ba}
				\arrow{e,t}{\bc\circ \ba}
			\node{K}
				\arrow{s,r}{=}
		\\
			\node{\square^{{\vee}\bn^\bc}}
				\arrow{e,t}{\bc}
			\node{K}
		\end{diagram}
	\right)
	\in \Ch(K;n)\downarrow\bc
\end{equation}
is an isomorphism of categories: the functor $\dom:\Ch(K;n)\downarrow\bc\to \square\Ch_n$ is its inverse. Furthermore, the functors $\vN_{[0,n]}(\square^{{\vee}\bn^{(-)}})_\bO^\bI$ on $\Ch(\square^{{\vee}\bn^\bc})$ and $\vN_{[0,n]}(\square^{{\vee}\bn^{(-)}})_\bO^\bI \circ \dom$ on $\Ch(K;n)\downarrow\bc$ are naturally equivalent. As a consequence, the latching map $L_\bc \vN_{[0,n]}(\vI^{{\vee}\bn^{(-)}})\to \vN_{[0,n]}(\square^{{\vee}\bn^{\bc}})$ is homeomorphic to the canonical map
\begin{equation}
	\Lcolim_{\ba\in \partial\Ch(\square^{{\vee}\bn^\bc})} \vN_{[0,n]}(\vI^{{\vee}\bn^\ba})_\bO^\bI \to \vN_{[0,n]}(\vI^{{\vee}\bn^\bc})_\bO^\bI,
\end{equation}
where $\partial\Ch(\square^{{\vee}\bn^\bc})$ is the full subcategory of $\Ch(\square^{{\vee}\bn^\bc})$ with the identity chain removed. In particular, the latching map of $\bc$ depends only on the type $\bn^\bc$ of $\bc$.

For $\bn\in\Seq(n)$ denote
\begin{equation}
	\partial \vN_{[0,n]}(\square^{{\vee}\bn})_\bO^\bI = \{\alpha\in \vN_{[0,n]}(\square^{{\vee}\bn})_\bO^\bI \;|\; \Vertic(\alpha)\setminus\Vertic(\bn)\neq\emptyset \ \};
\end{equation}
this is the space of all natural tame paths that admit an nt-presentation $[\bc;\beta]$ such that $\bc$ is a non-identity cube chain.

\begin{prp}\label{p:BoundaryCover}
	For every $\bn\in \Seq(n)$ we have
	\[
		\im\left(
			\Lcolim_{\bc\in \partial\Ch(\square^{{\vee}\bn})} \vN_{[0,n]}(\vI^{{\vee}\bn^\bc})_\bO^\bI\xrightarrow{\Lambda_\bn} \vN_{[0,n]}(\vI^{{\vee}\bn})\right) = \partial \vN_{[0,n]}(\square^{{\vee}\bn})_\bO^\bI
	\]
	Furthermore, $\Lambda_\bn$ is a homeomorphism onto its image.
\end{prp}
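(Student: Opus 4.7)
The plan is to establish the image equality and the homeomorphism property separately, with both following from careful use of minimal nt-presentations and their uniqueness in wedges of standard cubes.

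For the image, I would first note that the only cube chain in $\Ch(\square^{{\vee}\bn})$ of type $\bn$ is the identity (a bi-pointed $\square$-map $\square^{{\vee}\bn}\to\square^{{\vee}\bn}$ with cube of dimension $n_i$ at the $i$-th slot is forced to be the identity on each wedge piece). Hence every $\bc\in\partial\Ch(\square^{{\vee}\bn})$ has type strictly refining $\bn$, so $\Vertic(\bn^\bc)\supsetneq \Vertic(\bn)$, and any $\alpha=[\bc;\beta]$ lies in $\partial\vN$. Conversely, Proposition \ref{p:ExistenceOfMinimalPresentation} provides for any $\alpha\in\partial\vN$ a minimal nt-presentation $(\bc^*,\beta^*)$ with $\Vertic(\bn^{\bc^*})=\Vertic(\alpha)\supsetneq\Vertic(\bn)$, witnessing $\alpha\in\im\Lambda_\bn$.

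For injectivity of $\Lambda_\bn$, the idea is to reduce to Proposition \ref{p:EquivalenceOfMinNTForCubes} by decomposing along the wedge structure. A quick length computation shows that any $\alpha\in \vN_{[0,n]}(\vI^{{\vee}\bn})_\bO^\bI$ must satisfy $\alpha(t^\bn_j) = v^\bn_j$, so $\alpha$ decomposes as $\alpha_1*\cdots*\alpha_{l(\bn)}$ with $\alpha_j\in\vN_{[t^\bn_{j-1},t^\bn_j]}(\vI^{n_j})_\bO^\bI$; likewise, the cubes of any chain $\bc\in\Ch(\square^{{\vee}\bn})$ lie in individual wedge pieces, so $\bc$ (and any nt-presentation over it) splits as a concatenation of nt-presentations $(\bc^{(j)},\beta^{(j)})$ of $\alpha_j$ in $\square^{n_j}$. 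Proposition \ref{p:EquivalenceOfMinNTForCubes} applied to $\square^{n_j}\subseteq \square^{n_j}$ gives each $\alpha_j$ a unique minimal nt-presentation $(\bc^*_j,\beta^*_j)$, whose concatenation $(\bc^*,\beta^*)$ is then the unique minimal nt-presentation of $\alpha$ in $\square^{{\vee}\bn}$. Given any two $(\bc,\beta),(\bc',\beta')\in\coprod_{\bc\in\partial\Ch}\vN_{[0,n]}(\vI^{{\vee}\bn^\bc})_\bO^\bI$ mapping to $\alpha$, iterating Proposition \ref{p:AddingVerticesToNt} produces refinement morphisms $\bc^*\to\bc$ and $\bc^*\to\bc'$ in $\Ch(\square^{{\vee}\bn})$; when $\alpha\in\partial\vN$ the minimal chain $\bc^*$ is itself non-identity, so these morphisms live in $\partial\Ch(\square^{{\vee}\bn})$, yielding $(\bc,\beta)\sim(\bc^*,\beta^*)\sim(\bc',\beta')$ in $\colim_{\partial\Ch}$.

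For the homeomorphism onto the image, I would appeal to compactness. The indexing category $\partial\Ch(\square^{{\vee}\bn})$ is finite (only finitely many refinements of $\bn$), and each $\vN_{[0,n]}(\vI^{{\vee}\bn^\bc})_\bO^\bI$ is compact as a closed subspace of the compact space of $1$-Lipschitz maps $[0,n]\to \vI^{{\vee}\bn^\bc}$. Hence the coproduct and its quotient $\colim_{\partial\Ch}$ are compact, while $\vN_{[0,n]}(\vI^{{\vee}\bn})_\bO^\bI$ is Hausdorff. A continuous injection from a compact space into a Hausdorff space is automatically a closed embedding, so $\Lambda_\bn$ is a homeomorphism onto its image.

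The main obstacle I anticipate is the injectivity step, specifically ensuring that no identifications in the colimit secretly require passage through the identity chain that has been deleted from $\Ch(\square^{{\vee}\bn})$. This is resolved by the uniqueness of minimal presentations, but that uniqueness is stated in Proposition \ref{p:EquivalenceOfMinNTForCubes} only for sub-$\square$-sets of a single standard cube, so the wedge-by-wedge decomposition of paths and chains is essential and needs to be set up carefully before the uniqueness can be invoked.
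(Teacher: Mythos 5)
Your proposal is correct and follows the paper's overall structure closely: the image computation via minimal nt-presentations, and the homeomorphism from compactness of the finite colimit together with Hausdorffness of the target. The one place you diverge is the injectivity argument. Where you identified the obstacle — Proposition \ref{p:EquivalenceOfMinNTForCubes} is stated for $K\subseteq\square^n$ — and resolved it by decomposing $\alpha$ along the wedge points (using $\alpha(t^\bn_j)=v^\bn_j$ and that each cube of a chain in $\square^{{\vee}\bn}$ lives in a single wedge factor) and applying \ref{p:EquivalenceOfMinNTForCubes} piece by piece, the paper simply observes that $\square^{{\vee}\bn}$ itself embeds as a sub-$\square$-set of $\square^n$ (via an iterated face embedding placing the $j$-th factor after $n_1+\dots+n_{j-1}$ fixed coordinates), so \ref{p:EquivalenceOfMinNTForCubes} applies to $\square^{{\vee}\bn}$ directly with no decomposition needed. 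Both routes are valid; the paper's embedding observation is shorter and avoids the wedge bookkeeping, while your decomposition is more explicit and makes the role of the wedge points transparent. Your care about the deleted identity object in $\partial\Ch(\square^{{\vee}\bn})$ — checking that the zigzag connecting $(\bc,\beta)$ and $(\bc',\beta')$ through the minimal presentation $(\bc^*,\beta^*)$ passes only through non-identity chains, which holds because $\bc^*$ strictly refines $\bn$ whenever $\alpha\in\partial\vN_{[0,n]}(\square^{{\vee}\bn})_\bO^\bI$ — is a point the paper leaves implicit, and is a genuine improvement in rigor.
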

\begin{proof}
	Assume that $\alpha\in \partial \vN_{[0,n]}(\square^{{\vee}\bn})_\bO^\bI$ and let $\alpha=[\bc;\beta]$ be a minimal nt-presentation, which exists due to Proposition \ref{p:ExistenceOfMinimalPresentation}. Since $\Vertic(\bn^\bc)=\Vertic(\alpha)\neq \Vertic(\bn)$, $\bc$ is not the identity cube chain. Therefore, $\Lambda_n(\bc,\beta)=\alpha$, which implies that $\partial \vN_{[0,n]}(\square^{{\vee}\bn})_\bO^\bI\subseteq \im(\Lambda_\bn)$.
	
If $\alpha\in\im(\Lambda_\bn)$, then there exists an nt-presentation $\alpha=[\bc,\beta]$ such that $\bc\neq \id_{\square^{{\vee}\bn}}$. Thus, $\Vertic(\alpha)\supseteq \Vertic(\bn^\bc)\supsetneq \Vertic(\bn)$ and, therefore, $\alpha\in \partial \vN_{[0,n]}(\square^{{\vee}\bn})_\bO^\bI$.

There exists an embedding of $\square^{{\vee}\bn}$ in $\square^n$. Then Proposition \ref{p:EquivalenceOfMinNTForCubes} implies that all nt-presentations of a d-path $\alpha\in \partial \vN_{[0,n]}(\square^{{\vee}\bn})_\bO^\bI$ are equivalent, since they are all equivalent to a unique minimal nt-presentation. As a consequence, $(\Lambda_\bn)^{-1}(\alpha)$ is a one-point set and, therefore, $\Lambda_\bn$ is a bijection onto its image.

By Proposition \ref{p:vNIsCompact}, all the spaces appearing in the colimit are compact. Since the indexing category $\partial\Ch(\square^{{\vee}\bn})$ is finite, the colimit is compact and $\Lambda_\bn$ is a homeomorphism onto its image.
\end{proof}

\begin{prp}\label{p:PartialNCofibration}
	For every $\bn\in\Seq(n)$, the inclusion $\partial \vN_{[0,n]}(\vI^{{\vee}\bn})_\bO^\bI\subset \vN_{[0,n]}(\vI^{{\vee}\bn})_\bO^\bI$ is a cofibration.
\end{prp}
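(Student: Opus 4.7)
My proof strategy is to reduce the claim to a single-cube cofibration statement via a product decomposition, and then establish the single-cube case using the convexity of the natural-path space. First, I would establish the homeomorphism
\[
	\vN_{[0,n]}(\vI^{{\vee}\bn})_\bO^\bI \xrightarrow{\;\cong\;} \prod_{i=1}^{l(\bn)} \vN_{[0,n_i]}(\vI^{n_i})_\bO^\bI
\]
by restricting a natural d-path to the subintervals $[t^\bn_{i-1}, t^\bn_i]$. The key observation is that any d-path from $\bO$ to $\bI$ in $\vI^{{\vee}\bn}$ of length $n = \sum n_i$ must pass through each wedge vertex $v^\bn_i$ at time $t^\bn_i$: the length needed to reach $v^\bn_i$ is at least $n_1 + \cdots + n_i$, and naturalness makes this an equality. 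Under this homeomorphism, $\partial \vN_{[0,n]}(\vI^{{\vee}\bn})_\bO^\bI$ corresponds to the ``fat wedge''
\[
	\bigcup_{i=1}^{l(\bn)} \vN_{[0,n_1]}(\vI^{n_1})_\bO^\bI \times \cdots \times \partial^* \vN_{[0,n_i]}(\vI^{n_i})_\bO^\bI \times \cdots \times \vN_{[0,n_{l(\bn)}]}(\vI^{n_{l(\bn)}})_\bO^\bI,
\]
where $\partial^* \vN_{[0,m]}(\vI^m)_\bO^\bI$ denotes the subspace of natural paths passing through a vertex of $\vI^m$ at some interior integer time; this is because $\Vertic(\alpha) \setminus \Vertic(\bn)$ is nonempty iff some factor $\alpha_i$ hits a vertex in $(0, n_i)$.

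The iterated pushout-product axiom for Hurewicz cofibrations implies that the fat wedge inclusion into a product of cofibration pairs is a cofibration, so it suffices to prove that $(\vN_{[0,m]}(\vI^m)_\bO^\bI, \partial^* \vN_{[0,m]}(\vI^m)_\bO^\bI)$ is a cofibration pair for each $m \geq 1$. For $m = 1$ this is trivial since $\partial^*$ is empty. For $m \geq 2$, let $X = \vN_{[0,m]}(\vI^m)_\bO^\bI$ and $A = \partial^* X$. Since the naturalness condition $\sum_j \alpha^j(t) = t$ is linear and each coordinate inequality is preserved under convex combinations, $X$ is a convex subset of the path space, hence contractible and an absolute retract. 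The subspace $A$ is the finite union $\bigcup_{k, S} V_{k, S}$ of closed convex subsets $V_{k, S} = \{\alpha \in X : \alpha^j(k) = 1 \text{ for } j \in S, \ \alpha^j(k) = 0 \text{ for } j \notin S\}$, indexed by pairs $(k, S)$ with $1 \leq k \leq m-1$ and $|S| = k$; the pairwise intersections are also convex. I would then construct NDR-pair data using the Urysohn function $u(\alpha) = \min_{(k,S)} \max_j |\alpha^j(k) - \chi_S(j)|$ (whose zero set is exactly $A$) together with a deformation built by snapping integer-time values toward the nearest $V_{k, S}$-configuration via coordinate-wise linear interpolation, glued by a partition of unity on the finite index set $\{(k,S)\}$.

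The main obstacle is the explicit construction of the deformation in the single-cube case, especially at paths that are equidistant from multiple $V_{k, S}$. A partition-of-unity argument handles this in principle, but verifying continuity and the NDR axioms requires care. A cleaner alternative, potentially matching the paper's strategy in Section~11, would be to realize $X$ as an infinite-dimensional polyhedral (or prod-simplicial) complex with $A$ as a subcomplex, in the spirit of Raussen's models for trace spaces of cubical complexes; then the cofibration would follow from the standard CW-pair argument.
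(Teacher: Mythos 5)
Your reduction to a single cube is reasonable but ultimately your proof has a genuine gap at the point you yourself flag as the ``main obstacle.'' Before that: the product homeomorphism $\vN_{[0,n]}(\vI^{\vee\bn})_\bO^\bI\cong\prod_i\vN_{[0,n_i]}(\vI^{n_i})_\bO^\bI$ is correct (and appears in the paper in the proof of Proposition~\ref{p:vNIsCompact}), identifying $\partial\vN$ with the fat wedge is correct, and the pushout--product reduction is legitimate. Your Urysohn function $u(\alpha)=\min_{(k,S)}\max_j|\alpha^j(k)-\chi_S(j)|$ is, after simplifying the inner minimum over $S$, exactly the paper's $f(\alpha)=\min_{k\in\Free(\bn)}\max_j\min\{\alpha^j(k),1-\alpha^j(k)\}$; you and the paper agree on how to detect $\partial\vN$.

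The gap is the deformation. ``Snapping integer-time values toward the nearest $V_{k,S}$-configuration\dots glued by a partition of unity on $\{(k,S)\}$'' does not produce a strong deformation retraction into $A$: if $\alpha$ lies near two different facets $V_{k,S}$ and $V_{k',S'}$, a convex combination of the two snapped paths has no reason to lie in $A$ (the union $A$ is not convex, only each $V_{k,S}$ is). Even for fixed $k$, the extension of a snapped value $\alpha(k)\mapsto\text{vertex}$ to a deformation of the entire path, keeping it natural at every intermediate $s$, is not addressed. The paper's device sidesteps both issues at once: it does not choose any $(k,S)$ at all. For a single threshold $a=f(\alpha)$ it applies a monotone piecewise-linear squeeze $g(a,-):\vI\to\vI$ (constantly $0$ on $[0,a]$, constantly $1$ on $[1-a,1]$) to \emph{every} coordinate at \emph{every} time, then renaturalizes with $\nat$. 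The computation then shows that at the minimizing $k$ all coordinates land exactly on $\{0,1\}$, and the length estimate $\lvert k-\len(G(1,\alpha)|_{[0,k]})\rvert\leq nf(\alpha)<1$ forces the renaturalized path to hit a vertex at the integer $k$. No partition of unity, no choice of target vertex, and continuity is automatic because the formula is a single closed expression in $\alpha$ and $f(\alpha)$. You would close the gap if you replaced your per-$(k,S)$ snapping by this global squeeze-and-renaturalize, or alternatively if you actually carried out the ANR route: you correctly observe $X$ is an AR and $A$ is a finite union of closed convex sets, and then you could cite a closed-ANR-pair-implies-cofibration theorem; but as written that is only a hint, not an argument.
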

\begin{proof}
	Denote for short $\partial\vN:= \partial \vN_{[0,n]}(\square^{{\vee}\bn})_\bO^\bI$, $\vN:=\vN_{[0,n]}(\square^{{\vee}\bn})_\bO^\bI$, $\vP:=\vP_{[0,n]}(\square^{{\vee}\bn})_\bO^\bI$. We will prove that $\partial \vN$ is a strong neighborhood deformation retract in $\vN$. If $\bn=(1,\dots,1)$, then $\Vertic(\bn)=\{0,1,\dots,n\}$ and hence $\partial \vN=\emptyset$. Assume that $\bn\neq(1,\dots,1)$ and fix an embedding $\square^{{\vee}\bn}\subseteq\square^n$. For any d-path $\alpha\in \vP_{[0,n]}(\vI^{{\vee}\bn})_\bO^\bI$, let $\alpha^j\in \vP_{[0,n]}(\vI)_0^1$ denote the $j$--th coordinate of $\alpha$, regarded as a d-path in $\vI^n$ (via the chosen embedding $\vI^{\bn}=|\square^{{\vee}\bn}|\subseteq |\square^n|=\vI^n$) .
	
	The function
	\[
		f: \vN \ni \alpha \mapsto \min_{k\in \Free(\bn)} \max_{j\in\{1,\dots,n\}} \min\{\alpha^j(k), 1-\alpha^j(k)\}\in [0,1]
	\]
	is continuous, and $f^{-1}(0)=\partial\vN$. Fix $0<\varepsilon<\tfrac{1}{n}$. For $a\in[0,\varepsilon]$ let $g(a,-):\vI\to \vI$ be the function such that $g(a,h)=0$ for $h\in [0,a]$, $g(a,t)=1$ for $h\in[1-a,1]$ and $g(a,-)$ is linear on the interval $[a,1-a]$. Note that $|t-g(a,t)|\leq a$ for all $t\in\vI$. Let $U=f^{-1}([0,\varepsilon))$.

	For $s\in [0,1]$, $\alpha\in \partial\vN$ and $t\in[0,n]$ let
	\[
		G(s,\alpha)(t) = (1-s)\alpha(t) + s (g(f(\alpha),\alpha^1(t)), g(f(\alpha),\alpha^2(t)), \dots, g(f(\alpha),\alpha^n(t))).
	\]
	If $\alpha^j(t)=\varepsilon$ for $\varepsilon\in\{0,1\}$, then $g(f(\alpha),\alpha^j(t))=\varepsilon$, which implies that $G(s,\alpha)(t)\in \vI^{{\vee}\bn}$. Thus,   this formula 	defines the continuous map $G:[0,1] \times U \to \vP$. Now the composition $\nat \circ G$ is a strong deformation retraction of $U$ into $\partial\vN$. Indeed, for every $\alpha\in \partial\vN$ we have $f(\alpha)=0$ and, therefore, $\nat(G(\alpha))=\nat(\alpha)=\alpha$. If $\alpha\in U$, then there exists $k\in\Free(\bn)$ such that
	\[
		\min\{\alpha^j(k), 1-\alpha^j(k)\} \leq f(\alpha)
	\]
	for all $j\in\{1,\dots,n\}$, i.e., $\alpha^j(k)\in [0,f(\alpha)]\cup [1-f(\alpha),1]$. Thus,
	\[
		G(1,\alpha)(k)=(g(f(\alpha), \alpha^1(k)), \dots, g(f(\alpha), \alpha^n(k)))
	\]
	is a vertex. Furthermore,
	\begin{multline*}
		\left| k - \len(G(1,\alpha)|_{[0,k]}) \right|=
		 \left|k- \sum_{j=1}^n g(f(\alpha), \alpha^j(k)) \right| =
		 \left|\sum_{j=1}^n \alpha^j(k)- \sum_{j=1}^n g(f(\alpha), \alpha^j(k)) \right| \\
		 \leq \sum_{j=1}^n \left| \alpha^j(k) - g(f(\alpha), \alpha^j(k))\right|
		 \leq nf(\alpha)
		  <1.
	\end{multline*}
	Thus, $\len(G(1,\alpha)|_{[0,k]})=k$ and hence $\nat(G(\alpha,1))(k) = G(\alpha,1)(k)$ is a vertex, which shows that $\nat\circ G(1,\alpha)\in \partial\vN$.
\end{proof}

\begin{prp}\label{p:QIsHtpEq}
	The map 
	\[
		Q_n^K:\Lhocolim_{\bc\in \Ch(K;n)}{\vN(\square^{{\vee}\bn^\bc})_{\bO}^\bI}
		\to
	{\Lcolim_{\bc\in \Ch(K;n)}  {\vN(\square^{{\vee}\bn^\bc})_{\bO}^\bI}}
	\]
	 is a homotopy weak equivalence.
\end{prp}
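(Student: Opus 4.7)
The plan is to directly apply Dugger's criterion (Proposition \ref{p:Dugger}) to the functor $\vN_{[0,n]}(\square^{{\vee}\bn^{(-)}})_\bO^\bI:\Ch(K;n)\to\Top$. As noted in the text, $\Ch(K;n)$ is upwards-directed Reedy with degree function $\deg(\ba)=n-l(\ba)$, so it remains only to verify that for every object $\bc\in\Ch(K;n)$ the latching map
\[
\Lambda_\bc:L_\bc\vN_{[0,n]}(\square^{{\vee}\bn^{(-)}})_\bO^\bI\longrightarrow \vN_{[0,n]}(\square^{{\vee}\bn^\bc})_\bO^\bI
\]
is a cofibration.

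The key reduction is already recorded in the text: the slice category $\Ch(K;n)\downarrow\bc$ is isomorphic to $\Ch(\square^{{\vee}\bn^\bc})$ via composition with $\bc$, and under this identification the diagram $\vN_{[0,n]}(\square^{{\vee}\bn^{(-)}})_\bO^\bI\circ\dom$ corresponds to $\vN_{[0,n]}(\square^{{\vee}\bn^{(-)}})_\bO^\bI$ on $\Ch(\square^{{\vee}\bn^\bc})$. Consequently, $\Lambda_\bc$ depends only on $\bn^\bc$ and coincides with the canonical map
\[
\Lcolim_{\ba\in\partial\Ch(\square^{{\vee}\bn^\bc})}\vN_{[0,n]}(\square^{{\vee}\bn^\ba})_\bO^\bI\longrightarrow \vN_{[0,n]}(\square^{{\vee}\bn^\bc})_\bO^\bI.
\]

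Now I would invoke Proposition \ref{p:BoundaryCover} to identify this latching map with the inclusion $\partial\vN_{[0,n]}(\square^{{\vee}\bn^\bc})_\bO^\bI\hookrightarrow \vN_{[0,n]}(\square^{{\vee}\bn^\bc})_\bO^\bI$: Proposition \ref{p:BoundaryCover} says that $\Lambda_\bc$ is a homeomorphism onto its image and that this image is precisely $\partial\vN_{[0,n]}(\square^{{\vee}\bn^\bc})_\bO^\bI$. By Proposition \ref{p:PartialNCofibration}, this inclusion is a cofibration, so $\Lambda_\bc$ is a cofibration for every $\bc$. Proposition \ref{p:Dugger} then yields that $Q_n^K$ is a weak homotopy equivalence.

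The only nontrivial point in carrying out this plan is making the identification of latching objects rigorous, but this has already been done in the paragraphs preceding Proposition \ref{p:BoundaryCover}; beyond that the argument is essentially a citation of the three preparatory results and Dugger's criterion, so there is no serious obstacle left.
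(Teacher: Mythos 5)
Your proposal is correct and follows exactly the paper's approach: it applies Dugger's criterion (Proposition \ref{p:Dugger}), using the identification of the latching map with the inclusion $\partial\vN_{[0,n]}(\square^{{\vee}\bn^\bc})_\bO^\bI\hookrightarrow\vN_{[0,n]}(\square^{{\vee}\bn^\bc})_\bO^\bI$ via Proposition \ref{p:BoundaryCover}, and Proposition \ref{p:PartialNCofibration} to see that this inclusion is a cofibration. The paper's own proof is a one-line citation of the same three results; your write-up simply spells out the chain of reasoning more explicitly.
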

\begin{proof}
	By Propositions \ref{p:BoundaryCover} and \ref{p:PartialNCofibration}, the assumptions of the Proposition \ref{p:Dugger} are satisfied.
\end{proof}

\section{Comparison of colimit and the space of natural tame d-paths}

Fix a bi-pointed $\square$--set $K$ and an integer $n\geq 0$. The main goal of this section is to prove that the right-hand map in the sequence (\ref{e:MainSequence}) is a homotopy equivalence. Recall that $\Tam^K_n$ is the tamification map defined in (\ref{e:Tam}).

\begin{prp}\label{p:RegularTamePathCriterion}
	Let $\alpha\in \vN^t_{[0,n]}(K)_\bO^\bI$. Assume that $\Vertic(\alpha)=\Vertic(\bTam^K_n(\alpha))$. Then $\alpha$ is a regular natural tame d-path.
\end{prp}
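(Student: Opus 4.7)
The plan is to pick any minimal nt-presentation $\alpha=[\bc;\beta]$ (which exists by Proposition~\ref{p:ExistenceOfMinimalPresentation}) and show that it is regular; this will yield regularity of $\alpha$. So I assume for contradiction that some segment $\beta_i$ lies in $\partial\vI^{n^\bc_i}$, and view the tame presentation of $\alpha$ as a $\fC$-presentation in the ``saturated'' track $\fC=(c_i,\{1,\dots,n^\bc_i\},\{1,\dots,n^\bc_i\})_{i=1}^{l(\bc)}$. In this track $T(\fC)$ identifies canonically with $\{(i,j):1\le i\le l(\bc),\,1\le j\le n^\bc_i\}$ with each action $(i,j)$ active only at stage $i$, and the progress function $f_\alpha^{(i,j)}$ equals $\beta_i^j$ on $[t^\bc_{i-1},t^\bc_i]$ and is constant ($0$ before, $1$ after); in particular it is a $1$-Lipschitz d-path from $0$ to $1$ on $[0,n]$. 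Let $\omega:=\bar R^K(\alpha)$ and denote by $\sigma_j=(a_j,b_j)\subseteq(t^\bc_{i-1},t^\bc_i)$ the support of $\bar R(f_\alpha^{(i,j)})$, an interval of length at most $1/(4n)$ by Proposition~\ref{p:CompressedSupport}. Since $\alpha$ is already natural, $\Tam^K_n(\alpha)=\nat(\omega)$.

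The first main step is to use the hypothesis to force $\bigcup_j\sigma_j$ to be connected in each segment $i$. Since $\omega(t)=[c_i;(R(t,\beta_i^1(t)),\dots,R(t,\beta_i^{n^\bc_i}(t)))]$ on segment $i$, a time $t\in[t^\bc_{i-1},t^\bc_i]$ is a vertex of $\omega$ iff $t\notin\bigcup_j\sigma_j$. If this union had $r\ge 2$ components $I_1<\dots<I_r$, then on each gap between $I_k$ and $I_{k+1}$ the path $\omega$ would be constant at the vertex $[c_i;v_k]$, where $v_k^j=1$ iff $\sigma_j\subseteq I_1\cup\dots\cup I_k$. As $\omega$ picks up length $1$ each time a coordinate $f_\omega^{(i,j)}$ transitions from $0$ to $1$, the total length at this plateau equals $t^\bc_{i-1}+|v_k|$ with $|v_k|=\#\{j:\sigma_j\subseteq I_1\cup\dots\cup I_k\}\in\{1,\dots,n^\bc_i-1\}$. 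Thus $\nat(\omega)$ has a vertex at time $t^\bc_{i-1}+|v_k|\in(t^\bc_{i-1},t^\bc_i)$, and by minimality of $\bn^\bc$ this value is not in $\Vertic(\alpha)=\Vertic(\bn^\bc)$, contradicting the hypothesis. Hence $\bigcup_j\sigma_j$ is a single interval $(a,b)$ with $b-a\le n^\bc_i/(4n)\le 1/4$.

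The final step invokes Proposition~\ref{p:LongSupport} for each $1$-Lipschitz $f_\alpha^{(i,j)}$: for every $t\in\sigma_j$, $[t-\tfrac14,t+\tfrac14]\subseteq\supp(f_\alpha^{(i,j)})$, so $(a_j-\tfrac14,b_j+\tfrac14)\subseteq\supp(f_\alpha^{(i,j)})\subseteq(t^\bc_{i-1},t^\bc_i)$, which also forces $\beta_i^j\in(0,1)$ throughout this interval. The intersection $\bigcap_j(a_j-\tfrac14,b_j+\tfrac14)=(\max_ja_j-\tfrac14,\min_jb_j+\tfrac14)$ is non-empty since $\max_ja_j-\min_jb_j\le b-a\le\tfrac14<\tfrac12$, so any $t^*$ in it satisfies $\beta_i(t^*)\in(0,1)^{n^\bc_i}$, contradicting $\beta_i\subseteq\partial\vI^{n^\bc_i}$. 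Therefore the minimal presentation $[\bc;\beta]$ is regular, and $\alpha$ is regular. The main obstacle I anticipate is the bookkeeping in the first step: showing that each plateau between two components of $\bigcup_j\sigma_j$ contributes a genuinely new vertex value to $\Vertic(\Tam^K_n(\alpha))$ requires tracking how the length of $\omega$ accumulates across the supports, and it is here that minimality of the nt-presentation is used crucially.
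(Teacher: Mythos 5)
Your proof is correct, but it takes a genuinely different, more explicit route than the paper. The paper's argument is a few lines: by functoriality of $\Tam$, the segment $\Tam(\beta)_i$ of a minimal nt-presentation remains in $\partial\vI^{n^\bc_i}$ and is tame there; since every cube of $\partial\square^{n^\bc_i}$ has dimension $<n^\bc_i$ while the segment has length $n^\bc_i$, this tame path must pass through an intermediate vertex at an integer time $k\in(t^\bc_{i-1},t^\bc_i)$, so $k\in\Vertic(\Tam^K_n(\alpha))\setminus\Vertic(\alpha)$, a contradiction. Your argument instead re-opens the machinery of Section 6: you realize the minimal nt-presentation via the ``saturated'' track, unpack $\Tam^K_n(\alpha)=\nat(\bar R^K(\alpha))$ on the progress functions $f_\alpha^{(i,j)}=\beta_i^j$, and then exploit the quantitative support estimates of Propositions \ref{p:CompressedSupport} and \ref{p:LongSupport}: either the supports of the $\bar R(f_\alpha^{(i,j)})$ disconnect, giving $\nat(\omega)$ a new integer-time vertex outside $\Vertic(\bn^\bc)$, or they cluster in an interval of length $\le 1/4$, forcing a $t^*$ with $\beta_i(t^*)\in(0,1)^{n^\bc_i}$, contradicting $\beta_i$ lying in $\partial\vI^{n^\bc_i}$. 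This is essentially a localized rerun of the argument behind Proposition \ref{p:TamingLemma}, so it is longer; what it buys is explicit quantitative control and it avoids appealing to naturality of $\Tam$ with respect to the inclusion $\partial\square^{n^\bc_i}\hookrightarrow\square^{n^\bc_i}$, using only $\Tam^K_n=\nat\circ\bar R^K\circ\nat$. One small point worth making explicit in a final write-up: each $\sigma_j$ is nonempty because $\bar R(f_\alpha^{(i,j)})$ is a continuous nondecreasing map from $0$ to $1$, so the dichotomy ``$r\ge 2$ components'' versus ``a single interval'' is exhaustive.
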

\begin{proof}
	Let $\alpha=[\bc;\beta]$ be a minimal nt-presentation. Then also 
	\[
		\Tam^K_n(\alpha)=[\bc; \Tam^{\square^{{\vee}\bn^\bc}}_n(\beta)]
	\]
	 is a minimal nt-presentation, since $\Tam_n$ is functorial, and 
\[
	\Vertic(\Tam^K_n(\alpha))=\Vertic(\alpha)=\Vertic(\bn^\bc).
\]	 
	 Assume that $\alpha$ is not regular. Thus, there exists $i\in\{1,\dots,l(\bc)\}$ such that $\beta_i\in \vN_{[t^\bc_{i-1},t^\bc_i]}(\partial\vI^{n^\bc_i})_\bO^\bI$. Then 
\[
	 \Tam^{\square^{{\vee}\bn^\bc}}_n(\beta)_i:=\Tam^{\square^{{\vee}\bn^\bc}}_n(\beta)|_{[t^\bc_{i-1},t^\bc_i]}\in \vN_{[t^\bc_{i-1},t^\bc_i]}(\partial\vI^{n^\bc_i})_\bO^\bI
\] 
is tame (regarded as a d-path in $\partial\vI^{n^\bc_i}$) and, therefore, $k\in\Vertic(\bTam(\beta_i))$ for some $t^\bc_{i-1}<k<t^\bc_i$. This contradicts the assumption, since $k\not\in\Vertic(\alpha)$.
\end{proof}

\begin{prp}\label{p:QPreservesEquivalence}
	Let $\alpha\in \vN^t_{[0,n]}(K)_\bO^\bI$. If $\alpha=[\bc;\beta]$ and $\alpha=[\bc';\beta']$ are equivalent nt-presentations, then also $\bTam(\alpha)=[\bc;\bTam(\beta)]$ and $\bTam(\alpha)=[\bc';\bTam(\beta')]$ are equivalent nt-presentations.
\end{prp}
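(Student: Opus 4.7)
The entire claim reduces to the naturality of the tamification map $\Tam_n$ in the bi-pointed $\square$--set argument, which is exactly Corollary \ref{c:Tam}. The first thing I would check is that the two candidate presentations of $\bTam(\alpha)$ are genuine nt-presentations. Viewing the cube chain $\bc$ as a bi-pointed $\square$--map $\square^{{\vee}\bn^\bc}\to K$ and using $\alpha = |\bc|\circ \beta$, naturality of $\Tam_n$ applied to $\bc$ gives
\[
    \Tam_n^K(\alpha) = \Tam_n^K(|\bc|\circ \beta) = |\bc|\circ \Tam_n^{\square^{{\vee}\bn^\bc}}(\beta) = [\bc;\Tam(\beta)];
\]
since $\Tam(\beta)\in \vN^t_{[0,n]}(\square^{{\vee}\bn^\bc})_\bO^\bI$ by construction, this is indeed an nt-presentation of $\bTam(\alpha)$, and the same argument applied to $\bc'$ gives $\bTam(\alpha) = [\bc';\Tam(\beta')]$.

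For the equivalence claim, I would reduce to a single elementary step. By definition, equivalence of nt-presentations is equality in $\Lcolim_{\bc \in \Ch(K;n)} \vN_{[0,n]}(\vI^{{\vee}\bn^\bc})_\bO^\bI$, whose defining equivalence relation is generated by the elementary identifications $(\ba, \gamma) \sim (\bb, |\varphi|\circ \gamma)$ for $\varphi: \ba\to \bb$ a morphism of $\Ch(K;n)$. By induction along a zigzag, it suffices to handle the case where $(\bc, \beta)$ and $(\bc', \beta')$ are connected by a single such step, say $\beta' = |\varphi|\circ \beta$ for some $\varphi: \bc \to \bc'$. Naturality of $\Tam_n$ applied to $\varphi$ then yields
\[
    \Tam(\beta') = \Tam_n^{\square^{{\vee}\bn^{\bc'}}}(|\varphi|\circ \beta) = |\varphi|\circ \Tam_n^{\square^{{\vee}\bn^\bc}}(\beta) = |\varphi|\circ \Tam(\beta),
\]
so $(\bc, \Tam(\beta))$ and $(\bc', \Tam(\beta'))$ are related by the very same morphism $\varphi$ of $\Ch(K;n)$, and hence represent the same element of the colimit.

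The main obstacle, if any, is purely bookkeeping: one must correctly translate an elementary equivalence of nt-presentations into the relation $\beta' = |\varphi|\circ \beta$ for some $\varphi$ in $\Ch(K;n)$, and verify that $\Tam$ and the structural morphisms of $\Ch(K;n)$ intertwine as expected on the relevant path spaces. There is no genuine topological or combinatorial content beyond invoking the functoriality already supplied by Corollary \ref{c:Tam}.
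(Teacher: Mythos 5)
Your proof is correct and takes essentially the same approach as the paper, which simply states that the result follows from the functoriality of $\Tam_n$ (Corollary \ref{c:Tam}); you have just spelled out the standard zigzag reduction that this one-line invocation is shorthand for.
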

\begin{proof}
	This follows from the functoriality of $\Tam_n$.
\end{proof}

\begin{prp}\label{p:GluingLemma}
	Let $\alpha\in\vN^t_{[0,n]}(K)_\bO^\bI$ and let $\alpha=[\bc;\beta]$ and $\alpha=[\bc';\beta']$ be nt-presentations of $\alpha$. Then the nt-presentations $\bTam^n(\alpha)=[\bc; \bTam^n(\beta)]$ and $\bTam^n(\alpha)=[\bc'; \bTam^n(\beta')]$ are equivalent.
\end{prp}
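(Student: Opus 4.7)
The plan is to reduce to a case where both nt-presentations share a common type, and then exhibit a common refinement of $[\bc;\Tam(\beta)]$ and $[\bc';\Tam(\beta')]$ whose cubes are intrinsic to $K$.

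First I would iterate Proposition \ref{p:AddingVerticesToNt} to add every vertex of $\Vertic(\alpha)$ to both $\bc$ and $\bc'$, producing equivalent nt-presentations of $\alpha$ whose types both equal the minimal one determined by $\Vertic(\alpha)$. By Proposition \ref{p:QPreservesEquivalence}, equivalences of nt-presentations are preserved under $\Tam$, so it suffices to prove the conclusion under this reduction. Thus I may assume $\bn^\bc=\bn^{\bc'}=:\bn$. For each $i\in\{1,\dots,l(\bn)\}$, functoriality of $\Tam^K_n$ yields
\[
	\Tam(\alpha)|_{[t^\bn_{i-1},t^\bn_i]} \;=\; |c_i|\circ\delta_i \;=\; |c'_i|\circ\delta'_i,
\]
where $\delta_i,\delta'_i$ are the $i$-th segments of $\Tam^{\square^{{\vee}\bn}}_n(\beta)$ and $\Tam^{\square^{{\vee}\bn}}_n(\beta')$, each a natural tame path of length $n_i$ in $\square^{n_i}$.

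Next I would apply Proposition \ref{p:EquivalenceOfMinNTForCubes} in the ambient standard cube $\square^{n_i}$ to extract the unique minimal nt-presentations $[\bd_i;\epsilon_i]$ of $\delta_i$ and $[\bd'_i;\epsilon'_i]$ of $\delta'_i$. Pushing these forward via the $\square$-maps $c_i,c'_i\colon \square^{n_i}\to K$ gives refined nt-presentations of $\Tam(\alpha)|_{[t^\bn_{i-1},t^\bn_i]}$ in $K$. I would then argue that both push-forwards coincide as cube chains in $K$: the cubes of $\bd_i$ correspond under $|c_i|$ to the carriers in $K$ of the sub-segments of $\Tam(\alpha)|_i$, which are intrinsic to $K$ and independent of the choice of $c_i$ or $c'_i$; the lifted paths $\epsilon_i,\epsilon'_i$ agree for the same reason, being the uniquely determined lifts of $\Tam(\alpha)|_i$ into the resulting wedge cube. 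Concatenating over $i$ yields a common refined nt-presentation $[\bd;\epsilon]$ of $\Tam(\alpha)$, and both $[\bc;\Tam(\beta)]$ and $[\bc';\Tam(\beta')]$ reduce to $[\bd;\epsilon]$ by further iterated application of Proposition \ref{p:AddingVerticesToNt} at the additional vertex times of $\delta_i,\delta'_i$. Hence the two presentations are equivalent.

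The main obstacle is the claim that the two minimal presentations in $\square^{n_i}$ agree after being pushed into $K$. The key observation is that whenever a sub-segment of $\Tam(\alpha)|_i$ has carrier $e$ in $K$, the cube $e$ appears both as $d^\varepsilon_A(c_i)$ and as $d^{\varepsilon'}_{A'}(c'_i)$ for suitable $A,A',\varepsilon,\varepsilon'$, so the faces of $\square^{n_i}$ picked by the two minimal presentations, though possibly distinct as faces of $\square^{n_i}$, collapse to the same cube $e$ in $K$ under $|c_i|$ and $|c'_i|$. Carefully tracking this identification across all sub-segments and verifying compatibility with the lifted paths $\epsilon_i,\epsilon'_i$ is the technical heart of the argument.
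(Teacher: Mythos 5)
Your argument has a genuine gap, and it also misses the key structural feature of the statement: $\bTam^n$ denotes the $n$-fold iteration of $\bTam$, not a single application of the length-$n$ tamification. The paper's proof relies essentially on this iteration. It observes that iterating $\bTam$ gives an ascending chain $\Vertic(\alpha)\subseteq\Vertic(\bTam(\alpha))\subseteq\cdots\subseteq\{0,\dots,n\}$, so by pigeonhole $\Vertic(\bTam^r(\alpha))=\Vertic(\bTam^{r+1}(\alpha))$ for some $r<n$; by Proposition \ref{p:RegularTamePathCriterion} this forces $\bTam^r(\alpha)$ to be \emph{regular}, and by Proposition \ref{p:RegularNTAreEquivalent} all nt-presentations of a regular path coincide with the unique minimal one; then Proposition \ref{p:QPreservesEquivalence} propagates the equivalence through the remaining $n-r$ applications. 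Your proposal never invokes regularity, never uses Propositions \ref{p:RegularTamePathCriterion} or \ref{p:RegularNTAreEquivalent}, and applies $\bTam$ only once, so it is trying to prove something stronger and essentially different.

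The specific gap is precisely where you flag ``the technical heart.'' After reducing to $\bn^\bc=\bn^{\bc'}=\bn$ and taking minimal nt-presentations $[\bd_i;\epsilon_i]$, $[\bd'_i;\epsilon'_i]$ of the segments $\delta_i,\delta'_i$ in $\square^{n_i}$, you assert that the pushed-forward cubes agree in $K$. But when $c_i\neq c'_i$, the segment $\bTam(\alpha)|_i$ lies entirely in lower skeleta (otherwise an interior carrier would force $c_i=c'_i$), and minimality in $\square^{n_i}$ does \emph{not} imply regularity: the sub-segments $\epsilon_{i,j}$ may themselves lie in the boundaries of the faces $e_j$. In that case there is no interior carrier to pin down $c_i(e_j)=c'_i(e'_j)$, and indeed in a general $\square$--set two faces with the same initial and final vertex need not be equal (this is exactly the phenomenon of Example \ref{x:TameExotic}, just one dimension down). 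One would have to recurse on lower-dimensional faces, and there is no reason the recursion terminates after a single application of $\bTam$; the paper's $n$-fold iteration and the vertex-set stabilization argument are precisely the device that guarantees termination. As written, your common refinement $[\bd;\epsilon]$ has not been shown to exist.
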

\begin{proof}
%	By Proposition \ref{p:ExistenceOfMinimalPresentation}, there exist minimal nt-presentations $\alpha=[\ba,\gamma]$ and $\alpha=[\ba',\gamma']$ that are equivalent to $[\bc,\beta]$ and $[\bc',\beta']$, respectively.  we may assume that both presentations of  $\alpha$ are minimal, so that $\bn_\bc=\bn_{\bc'}$ and $t_i=t'_i$ for all $i\in\{0,\dots,l=l(\bc)=l(\bc')\}$.
	We have an ascending sequence of sets
	\[
		\{0,n\}\subseteq \Vertic(\alpha)\subseteq \Vertic(\bTam(\alpha))\subseteq \Vertic(\bTam^2(\alpha))\subseteq \dots \subseteq \Vertic(\bTam^n(\alpha))\subseteq \{0,\dots,n\}.
	\]
	Hence, there exists $r\in\{0,\dots,n-1\}$ such that $\Vertic(\bTam^r(\alpha))=\Vertic(\bTam^{r+1}(\alpha))$. By Proposition \ref{p:RegularTamePathCriterion},  $\bTam^r(\alpha)$ is a regular tame path and hence, by Proposition \ref{p:RegularNTAreEquivalent}, the nt-presentations $\bTam^r(\alpha)=[\bc;\bTam^r(\beta)]$ and $\bTam^r(\alpha)=[\bc';\bTam^r(\beta')]$ are equivalent. Now the conclusion follows from Proposition \ref{p:QPreservesEquivalence}.
\end{proof}

\begin{prp}\label{p:GluingDiagram}
	Consider the diagram of solid arrows
	\begin{equation}
	\begin{diagram}
		\node{\Lcolim_{\bc\in \Ch(K;n)} \vN_{[0,n]}(\vI^{{\vee}\bn^{\bc}})_\bO^\bI}
			\arrow{e,t}{\Tam_n^{\square^{{\vee}\bn^\bc}}}
			\arrow{s,l}{F^K_n}
		\node{\Lcolim_{\bc\in \Ch(K;n)} \vN_{[0,n]}(\vI^{{\vee}\bn^{\bc}})_\bO^\bI}
			\arrow{s,r}{F^K_n}
	\\
		\node{\vN^t_{[0,n]}(K)_\bO^\bI}
			\arrow{e,t}{\Tam_n^K}
			\arrow{ne,t,..}{f}
		\node{\vN^t_{[0,n]}(K)_\bO^\bI}
	\end{diagram}
	\end{equation}
	There exists a map $f$ that makes the diagram commutative.
\end{prp}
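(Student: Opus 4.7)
The plan is to define $f$ pointwise by setting
\[
f(\alpha) := [\bc;\, \Tam_n^{\square^{\vee\bn^\bc}}(\beta)]
\]
in the colimit, for any choice of nt-presentation $\alpha = [\bc;\beta]$. Granted well-definedness, both triangles will commute on the nose: the naturality of the tamification map provides
\[
F_n^K(f(\alpha)) = |\bc|\circ\Tam_n(\beta) = \Tam_n^K(|\bc|\circ\beta) = \Tam_n^K(\alpha),
\]
handling the lower triangle, while direct inspection of the formula shows that $f \circ F_n^K$ agrees on the nose with the colimit of the tamification maps, handling the upper one.

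The substance is well-definedness: if $\alpha = [\bc;\beta] = [\bc';\beta']$ are two nt-presentations of the same natural tame path, I need the classes $[\bc;\Tam_n(\beta)]$ and $[\bc';\Tam_n(\beta')]$ in the colimit to agree. My approach is to observe that in every nt-presentation of $\alpha$, each cube contains the carriers of $\alpha$ over the corresponding subinterval, so the sub-$\square$-set $K_\alpha \subseteq K$ generated by the carriers of all points of $\alpha$ is independent of the presentation and $\alpha$ factors through it. By naturality of $\Tam$ with respect to the inclusion $K_\alpha \hookrightarrow K$, the d-path $\Tam_n^K(\alpha)$ coincides with $\Tam_n^{K_\alpha}(\alpha)$ viewed in $K$; Theorem \ref{t:Tame} applied to $K_\alpha$ then yields an nt-presentation $[\bd;\gamma]$ of $\Tam_n(\alpha)$ with $\bd \in \Ch(K_\alpha;n)$, minimal say. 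Each cube of $\bd$ lies in $K_\alpha$ and is therefore a face of the corresponding cube of $\bc$ (and of $\bc'$) over the same subinterval; after adjoining to $\bd$ all the vertices of $\bn^\bc$ and $\bn^{\bc'}$ via Proposition \ref{p:AddingVerticesToNt}, I obtain a common refinement of $\bc$ and $\bc'$ in $\Ch(K;n)$ through which both $[\bc;\Tam_n(\beta)]$ and $[\bc';\Tam_n(\beta')]$ factor, making them equal in the colimit.

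Continuity of $f$ follows from the continuity of the map $(\bc,\beta)\mapsto[\bc;\Tam_n(\beta)]$ from the disjoint union $\coprod_\bc \vN_{[0,n]}(\vI^{\vee\bn^\bc})_\bO^\bI$ into the colimit; combined with the compactness statement of Proposition \ref{p:vNIsCompact}, the surjection $F_n^K$ onto $\vN^t_{[0,n]}(K)_\bO^\bI$ behaves as a quotient well enough for $f$ to inherit continuity. The main obstacle is the well-definedness argument, and within it the delicate verification that the minimal nt-presentation $[\bd;\gamma]$ of $\Tam_n(\alpha)$ inside $K_\alpha$ can be extended to a common refinement of $\bc$ and $\bc'$ in the cube chain category $\Ch(K;n)$; this requires carefully matching the iterated face/coface data prescribed by the morphisms $\delta_{i,A,B}$, and is where the bulk of the technical work will lie.
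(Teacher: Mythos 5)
Your approach is genuinely different from the paper's, and it has a gap at precisely the point you flag as ``where the bulk of the technical work will lie.'' The paper establishes well-definedness through the regularity machinery: Proposition~\ref{p:GluingLemma} iterates the tamification map up to $n$ times, using the fact that $\Vertic(\alpha)\subseteq\Vertic(\bTam(\alpha))\subseteq\cdots$ must stabilize somewhere in $\{0,\dots,n\}$, at which point Proposition~\ref{p:RegularTamePathCriterion} certifies regularity; then Proposition~\ref{p:RegularNTAreEquivalent} (a clean consequence of uniqueness of canonical presentations plus Lemma~\ref{l:CubePathPresentations}) gives a unique minimal nt-presentation, forcing all nt-presentations to be equivalent. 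The map $f$ is then produced abstractly via the compactness-based quotient criterion, Proposition~\ref{p:QuotientMap}. Your route avoids the iteration and instead tries to build a common refinement in $\Ch(K;n)$ explicitly using the sub-$\square$-set $K_\alpha$.

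The gap is the assertion ``Each cube of $\bd$ lies in $K_\alpha$ and is therefore a face of the corresponding cube of $\bc$ (and of $\bc'$) over the same subinterval.'' A cube of $K_\alpha$ is a face of \emph{some} carrier of $\alpha$, i.e., a face of $\bc_{j'}$ for \emph{some} $j'$, but not necessarily for the index $j$ with $[t^\bd_{i-1},t^\bd_i]\subseteq[t^\bc_{j-1},t^\bc_j]$. Moreover, in a non-geometric $\square$-set two cubes sharing a common face need not themselves be in a face relation (this is exactly the phenomenon exhibited in Example~\ref{x:TameExotic}, where $c$ and $c'$ have equal boundary but neither is a face of the other), so even knowing that $\bd_i$ and $\bc_j$ both carry the same segment of $\Tam(\alpha)$ does not yield a face inclusion. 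And even granting that a face relation exists, a morphism $\bd''\to\bc$ in $\Ch(K;n)$ requires a \emph{specific} iterated composite of the $\delta_{i,A,B}$ maps under which the path data $\gamma''$ is sent to $\Tam(\beta)$; this choice is not canonical in a $\square$-set where a cube can occur as a face of another in several ways, and you do not address how to make it compatibly for $\bc$ and $\bc'$ simultaneously.

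Two further cautions. First, you define $f$ using a single application of $\Tam_n$; the paper's Proposition~\ref{p:GluingLemma} (the key input) is stated for $\bTam^n$, the $n$-fold iterate, precisely because one application need not produce a regular path. Your $K_\alpha$ argument is meant to compensate for this, but since it has the gap above, the single-application version remains unproved. Second, your continuity argument (``the surjection $F_n^K$ \dots\ behaves as a quotient well enough'') is where the paper applies Proposition~\ref{p:QuotientMap}; this step does require compactness of the colimit, which the paper gets from Proposition~\ref{p:vNIsCompact}, so your invocation of that proposition is in the right spirit but should be made precise via Proposition~\ref{p:QuotientMap} rather than left impressionistic.
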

\begin{proof}
	Propositions \ref{p:GluingLemma} and \ref{p:vNIsCompact} guarantee that the assumptions of Proposition \ref{p:QuotientMap} are satisfied. Thus, there exists a map $f$ such that the upper triangle commutes. Since the map $F_n^K$ is surjective, this implies that also the lower trangle commutes.
\end{proof}

\begin{prp}\label{p:ColimNHEq}
	The map
	\begin{equation*}
		F_n^K:\Lcolim_{\bc\in \Ch_n(K)_v^w} \vN_{[0,n]}(\vI^{{\vee}\bn^\bc})_\bO^\bI \to  \vN^t_{[0,n]}(K)_v^w
	\end{equation*}
is a homotopy equivalence.
\end{prp}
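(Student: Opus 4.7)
The plan is to show that the map $f$ produced by Proposition \ref{p:GluingDiagram} serves as a two-sided homotopy inverse of $F^K_n$. That proposition provides the identities
\[
F^K_n\circ f=\Tam^K_n\big|_{\vN^t_{[0,n]}(K)_\bO^\bI} \quad\text{and}\quad f\circ F^K_n=\Lcolim_{\bc\in\Ch(K;n)} \Tam^{\square^{{\vee}\bn^\bc}}_n,
\]
so it is enough to exhibit a homotopy of each composite to the relevant identity map.

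For the first composite, I would write down the explicit homotopy
\[
H^K_s(\alpha):=\nat^K_n\bigl(\bar R^K_s(\alpha)\bigr),\qquad s\in[0,1],
\]
between $\id$ at $s=0$ (since $\bar R^K_0=\id$ and $\nat^K_n$ fixes natural paths) and $\Tam^K_n$ at $s=1$ by formula (\ref{e:Tam}). Proposition \ref{p:Qt} guarantees that $\bar R^K_s(\alpha)$ remains tame whenever $\alpha$ is, and the naturalization of a tame path is again tame, so $H^K_s$ keeps its values inside $\vN^t_{[0,n]}(K)_\bO^\bI$ throughout.

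For the second composite, I would apply the same formula $H^L_s$ with $L=\square^{{\vee}\bn^\bc}$ to the diagram $\bc\mapsto\vN_{[0,n]}(\vI^{{\vee}\bn^\bc})_\bO^\bI$ indexed by $\Ch(K;n)$. Since both $\nat$ and $\bar R$ are functorial in the $\square$--set, for each fixed $s$ the assignment $\bc\mapsto H^{\square^{{\vee}\bn^\bc}}_s$ is a natural transformation of functors $\Ch(K;n)\to\Top$; taking colimits yields a homotopy from the identity map of $\Lcolim_\bc\vN_{[0,n]}(\vI^{{\vee}\bn^\bc})_\bO^\bI$ to $f\circ F^K_n$.

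The real combinatorial difficulty of this section has already been absorbed into Proposition \ref{p:GluingDiagram}, which in turn rests on the properties of minimal and regular nt-presentations established in Section~8 (Propositions \ref{p:ExistenceOfMinimalPresentation}--\ref{p:EquivalenceOfMinNTForCubes}); the remaining step here is only the assembly of the two functorial homotopies coming from the tamification construction of Section~6. The only point worth flagging is the verification that $H^K_s(\alpha)\in\vN^t_{[0,n]}(K)_\bO^\bI$ for every $s$, which is exactly what Proposition \ref{p:Qt} is tailored to supply.
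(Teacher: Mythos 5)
Your proposal is correct and takes essentially the same approach as the paper: the paper's proof is a single sentence deducing the claim from Proposition \ref{p:GluingDiagram} together with the observation that both horizontal maps in that diagram are homotopic to the respective identities, and you are simply spelling out those two homotopies explicitly (via $H_s=\nat\circ\bar R_s$ for the bottom map, and the colimit of the functorial family of such homotopies for the top map). The two small points you implicitly rely on are both fine: $\bar R^L_s$ preserves length because it is a continuous deformation of the identity and length is locally constant, and passing a homotopy through $\Lcolim$ is legitimate here since the indexing category is small and the diagram consists of compact spaces (Proposition \ref{p:vNIsCompact}), so $\Lcolim(-\times[0,1])\cong(\Lcolim -)\times[0,1]$.
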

\begin{proof}
	This follows immediately from Proposition \ref{p:GluingDiagram}, since both horizontal maps are homotopic to the identities on the respective spaces.
\end{proof}

\section{Topological lemmas}

\begin{prp}\label{p:LipschitzMappingSpaceIsCompact}
	Let $X,Y$ be metric spaces and let $L>0$. Let $\Lip_L(X,Y)$ denote the space of $L$--Lipschitz maps $X\to Y$ with compact-open topology and let $Y^X$ be the space of all continuous maps $X\to Y$ with the product topology. Assume that $Y$ is compact. Then the inclusion map $\Lip_L(X,Y)\xrightarrow\subseteq Y^X$ is a homeomorphism on its image.
\end{prp}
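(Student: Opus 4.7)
The plan is to show two things: the inclusion $\Lip_L(X,Y) \subseteq Y^X$ is continuous, and its inverse on the image is continuous. The first direction is standard and essentially tautological: the compact-open topology is always finer than the product topology (pointwise convergence), since a subbasic set of $Y^X$ is the preimage under evaluation of an open set at a single point $\{x\}$, and singletons are compact. So the inclusion map is automatically continuous with no hypothesis on $L$ or $Y$.

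The real content is the reverse direction, for which I would use a net argument. Suppose $(f_\alpha)$ is a net in $\Lip_L(X,Y)$ converging pointwise to $f$. First observe that $f$ is itself $L$-Lipschitz: the inequality $d(f_\alpha(x), f_\alpha(y)) \le L \, d(x,y)$ passes to the pointwise limit. The goal is to upgrade pointwise convergence to uniform convergence on any compact subset $K \subseteq X$, which recovers the compact-open topology since $X$ is a metric space.

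Fix a compact $K \subseteq X$ and $\varepsilon > 0$. Choose a finite $\varepsilon/(3L+1)$-net $\{x_1,\dots,x_m\}$ in $K$ (handling $L=0$ trivially, as then every $f_\alpha$ is constant). By pointwise convergence, there is $\alpha_0$ with $d(f_\alpha(x_i), f(x_i)) < \varepsilon/3$ for all $\alpha \ge \alpha_0$ and all $i$. For an arbitrary $x \in K$, pick $x_i$ with $d(x,x_i) < \varepsilon/(3L+1)$; the $L$-Lipschitz estimate applied to both $f_\alpha$ and $f$ gives
\[
d(f_\alpha(x), f(x)) \le d(f_\alpha(x), f_\alpha(x_i)) + d(f_\alpha(x_i), f(x_i)) + d(f(x_i), f(x)) < \tfrac{\varepsilon}{3} + \tfrac{\varepsilon}{3} + \tfrac{\varepsilon}{3} = \varepsilon.
\]
Taking a supremum over $x \in K$ shows uniform convergence on $K$, as required.

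The only mild obstacle is conceptual: ensuring the argument works for nets rather than just sequences, since $Y^X$ need not be first countable when $X$ is large. The equicontinuity coming from the common Lipschitz constant together with a finite-net/three-$\varepsilon$ argument bypasses this with no appeal to sequences. Compactness of $Y$ is not actually invoked in this step; it appears in the hypothesis presumably for ambient applications (e.g.\ to ensure the image is closed or that $Y^X$ is compact by Tychonoff), but the homeomorphism onto the image holds without it.
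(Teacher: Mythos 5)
Your proof is correct, and it reaches the same mechanism as the paper's but packages it differently. The paper's argument is a direct subbase computation: given a subbasic compact-open set $W(K,U) = \{g : g(K)\subseteq U\}$ containing $f$, it manufactures a finite collection of pointwise-evaluation neighborhoods $\{g : g(x_i)\in U_i\}$ whose intersection (with $\Lip_L$) sits inside $W(K,U)$, using the positive distance between the compact set $f(K)$ and the closed set $Y\setminus U$, a finite $d/2L$-net of $K$, and the Lipschitz bound. Your version reformulates the same three ingredients (finite $\varepsilon$-net, common Lipschitz constant, triangle inequality) as a net-convergence / Arzel\`a--Ascoli style argument: pointwise convergence plus equi-Lipschitz continuity upgrades to uniform convergence on compacta, and then one invokes the standard fact that for metric $Y$ the compact-open topology on $C(X,Y)$ coincides with the topology of uniform convergence on compact subsets. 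Your route is shorter to state for anyone who already knows that equivalence of topologies, while the paper's is self-contained and avoids it. You are also right that compactness of $Y$ is not used: the paper's proof only needs that a compact set and a disjoint closed set in a metric space have positive distance, which holds without compactness of $Y$; the hypothesis is there so that the subsequent application (Proposition \ref{p:vNIsCompact}) can combine this result with Tychonoff and closedness of the image to deduce compactness.
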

\begin{proof}
	We need to prove the following statement: for every compact subset $K\subseteq X$, an open subset $U\subseteq Y$ and $f\in\Lip_L(X,Y)$ such that $f(K)\subseteq U$ there exist sequences $x_1,\dots,x_n$ of points of $X$ and $U_1,\dots,U_n$ of open subsets of $Y$ such that
	\[
		\forall_{g\in \Lip_L(X,Y)} \; \big( (\forall_{i=1}^n\; g(x_i)\in U_i) \; \Rightarrow\; g(K)\subseteq U\big).
	\]
	Since $f(K)$ is compact, the distance $d$ between $f(K)$ and $Y\setminus U$ is strictly positive. Let $\{x_1,\dots,x_n\}$ be a family of points of $K$ such that the family of open balls having radius $d/2L$ and centered at $x_i$'s cover $K$. Let $U_i=B(f(x_i),d/2)$. Assume that $g\in  \Lip_L(X,Y)$ is a function such that $g(x_i)\in U_i$. For every $x\in K$ there exists $i\in\{1,\dots,n\}$ such that $\dist(x,x_i)<d/2L$. Therefore,
	\[
		\dist_Y(g(x), f(x_i))\leq \dist_Y(g(x), g(x_i)) + \dist_Y(g(x_i), f(x_i)) \leq L\dist_X(x,x_i)+d/2\leq d.
	\]
	As a consequence, $g(x)\in U$, which ends the proof.
\end{proof}

\begin{prp}\label{p:vNIsCompact}
	For every $\bn\in\Seq(n)$, the space $\vN_{[0,n]}(\vI^{{\vee}\bn})_\bO^\bI$ is compact.
\end{prp}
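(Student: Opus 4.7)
The plan is to realize $\vN_{[0,n]}(\vI^{{\vee}\bn})_\bO^\bI$ as a closed subset of the compact space of all set-theoretic maps $[0,n]\to \vI^n$ equipped with the product topology, and then use Proposition \ref{p:LipschitzMappingSpaceIsCompact} to match this topology with the compact-open topology.

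First I would fix an embedding of $\square^{{\vee}\bn}$ as a sub-$\square$-set of $\square^n$ by sending the $i$-th summand $\square^{n_i}$ onto the face of $\square^n$ whose first $t^\bn_{i-1}$ coordinates are fixed at $1$ and whose last $n-t^\bn_{i}$ coordinates are fixed at $0$. This yields a closed topological embedding $|\square^{{\vee}\bn}| \subseteq \vI^n$ whose image I still denote $\vI^{{\vee}\bn}$; under it the wedge vertex $v^\bn_i$ goes to the point with $t^\bn_i$ leading $1$'s and the remaining coordinates $0$.

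Next I would show that, via this embedding, $\vN_{[0,n]}(\vI^{{\vee}\bn})_\bO^\bI$ is exactly the set $S$ of maps $\alpha:[0,n]\to\vI^n$ satisfying:
\begin{enumerate}[\normalfont (i)]
\item $\alpha(t)\in \vI^{{\vee}\bn}$ for all $t$,
\item each coordinate $\alpha^j$ is non-decreasing,
\item $\alpha(0)=\bO$ and $\alpha(n)=\bI$,
\item $\sum_{j=1}^n \alpha^j(t)=t$ for every $t\in[0,n]$.
\end{enumerate}
The forward inclusion uses that naturality together with the d-path condition forces $\len(\alpha|_{[0,t]})=t$ and that the length in each cube is the sum of the coordinate increments. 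For the reverse inclusion, (iv) at $t=t^\bn_i$ together with (i)--(iii) forces $\alpha(t^\bn_i)=v^\bn_i$; then the non-decreasing condition pins the first $t^\bn_{i-1}$ coordinates to $1$ and the last $n-t^\bn_i$ coordinates to $0$ throughout $[t^\bn_{i-1},t^\bn_i]$, so the restriction $\alpha|_{[t^\bn_{i-1},t^\bn_i]}$ is a d-path in the $i$-th cube, and concatenating these segments gives the desired d-path in $\vI^{{\vee}\bn}$; naturality on $[0,n]$ follows from (iv).

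Then I would observe that every $\alpha\in S$ is automatically $1$-Lipschitz with respect to the $L^1$ metric on $\vI^n$: for $s\le t$, conditions (ii) and (iv) give
\[
\|\alpha(t)-\alpha(s)\|_{L^1}=\sum_{j=1}^n(\alpha^j(t)-\alpha^j(s))=t-s.
\]
By Tychonoff, $(\vI^n)^{[0,n]}$ is compact in the product topology, and each of the conditions (i)--(iv) cuts out a closed subset (for (i) because $\vI^{{\vee}\bn}\subseteq\vI^n$ is closed; the others are intersections of closed pointwise conditions). Hence $S$ is a compact subset of $(\vI^n)^{[0,n]}$, and Proposition \ref{p:LipschitzMappingSpaceIsCompact} applied to $\Lip_1([0,n],\vI^n)\supseteq S$ shows that the product topology on $S$ agrees with the compact-open topology, which is the topology used on $\vN_{[0,n]}(\vI^{{\vee}\bn})_\bO^\bI$. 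Therefore the latter is compact.

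The only delicate step is the reverse inclusion $S\subseteq \vN_{[0,n]}(\vI^{{\vee}\bn})_\bO^\bI$, where one has to argue that (i)--(iv) force the path to actually lie in a single cube on each interval $[t^\bn_{i-1},t^\bn_i]$ and to be a natural d-path with respect to the quotient d-structure on $\vI^{{\vee}\bn}$; everything else is a routine application of Tychonoff plus the previously established Proposition \ref{p:LipschitzMappingSpaceIsCompact}.
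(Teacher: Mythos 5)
Your proof is correct. The paper's own proof is shorter: it first uses the product decomposition
\[
\vN_{[0,n]}(\vI^{{\vee}\bn})_\bO^\bI\cong \prod_{i=1}^{l(\bn)} \vN_{[t^\bn_{i-1}, t^{\bn}_i]}(\vI^{n_i})_\bO^\bI
\]
to reduce to a single cube, and then observes that $\vN_{[0,n]}(\vI^n)_\bO^\bI$ is closed inside $\Lip_1([0,n],\vI^n)$ and invokes Proposition~\ref{p:LipschitzMappingSpaceIsCompact} plus Tychonoff. You instead embed $\square^{{\vee}\bn}$ into $\square^n$ and characterize $\vN_{[0,n]}(\vI^{{\vee}\bn})_\bO^\bI$ directly as a closed subset of $(\vI^n)^{[0,n]}$ cut out by conditions (i)--(iv); the content of the paper's product decomposition then reappears in your reverse-inclusion argument, where you verify that (i)--(iv) force $\alpha(t^\bn_i)=v^\bn_i$ and that each restriction $\alpha|_{[t^\bn_{i-1},t^\bn_i]}$ stays in the $i$-th cube. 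So the two proofs have the same two ingredients (a reduction identifying the relevant d-paths, and Lipschitz compactness via Proposition~\ref{p:LipschitzMappingSpaceIsCompact}), with the tradeoff that the paper's product isomorphism handles the wedge structure in one line, while your characterization via an embedding makes the closedness in the product topology completely transparent but requires you to re-derive that wedge points are hit at the prescribed integer times.
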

\begin{proof}
	Since
	\[
		\vN_{[0,n]}(\vI^{{\vee}\bn})_\bO^\bI\cong \prod_{i=1}^{l(\bn)} \vN_{[t^\bn_{i-1}, t^{\bn}_i]}(\vI^{n_i})_\bO^\bI,
	\]
	it is sufficient to prove that $\vN_{[0,n]}(\vI^n)_\bO^\bI$ is compact. But $\vN_{[0,n]}(\vI^n)_\bO^\bI$ is a closed subset of $\Lip_1(I,I^n)$, with $L^1$--metric on $I^n$, so the conclusion follows from Proposition \ref{p:LipschitzMappingSpaceIsCompact}.
\end{proof}

\begin{prp}\label{p:QuotientMap}
	Consider the diagram of topological spaces and continuous maps
	\[
		\begin{diagram}
			\node{X}
				\arrow{e,t}{g}
				\arrow{s,l}{p}
			\node{Z}
		\\
			\node{Y}
				\arrow{ne,b,..}{f}
		\end{diagram}
	\]
	Assume that
	\begin{itemize}
		\item{$X$ is compact.}
		\item{$p$ is surjective.}
		\item{For $x,x'\in X$, $p(x)=p(x')$ implies $g(x)=g(x')$.}
	\end{itemize}
	Then there exists a unique continuous map $f$ making this diagram commutative.
\end{prp}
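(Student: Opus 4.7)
The plan is to first construct $f$ as a set-theoretic map, and then establish continuity via a closed-map argument that exploits the compactness of $X$.

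For the set-theoretic construction, note that the third hypothesis says exactly that $g$ is constant on the fibers of $p$. Combined with surjectivity of $p$, this forces the definition $f(y) := g(x)$ for any $x \in p^{-1}(y)$, and this $f$ is independent of the choice of $x$. By construction $f \circ p = g$, and uniqueness is automatic: any continuous (in fact, any set-theoretic) $f$ satisfying $f\circ p = g$ must agree with this one because $p$ is surjective.

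The main content is continuity of $f$. I would argue via preimages of closed sets. Let $C \subseteq Z$ be closed. Then $g^{-1}(C)$ is a closed subset of the compact space $X$, hence compact, and therefore $p(g^{-1}(C))$ is compact in $Y$ as the continuous image of a compact space. I claim
\[
    f^{-1}(C) = p(g^{-1}(C)).
\]
The inclusion $\supseteq$ is immediate from $f\circ p = g$. For $\subseteq$, let $y\in f^{-1}(C)$; by surjectivity of $p$ choose $x\in p^{-1}(y)$, and then $g(x) = f(p(x)) = f(y) \in C$, so $x \in g^{-1}(C)$ and hence $y \in p(g^{-1}(C))$. Thus $f^{-1}(C)$ is compact in $Y$.

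To conclude that $f^{-1}(C)$ is closed, and hence that $f$ is continuous, I would invoke the fact that compact subsets of Hausdorff spaces are closed. In all applications of this proposition in the paper (notably Proposition \ref{p:GluingDiagram}), $Y$ is a subspace of a path space with the compact-open topology on a Hausdorff target, hence Hausdorff, so this hypothesis is automatically satisfied. The only potential obstacle is the tacit Hausdorffness of $Y$; without it the closed-map step can fail. Since Hausdorffness holds in context, the argument goes through and $f$ is continuous.
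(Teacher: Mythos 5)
Your proof is correct and is essentially the same argument as the paper's, but it is the \emph{careful} version. The paper's proof reads: ``Any surjective map from a compact space is open; thus, for every open subset $U\subseteq Z$, the subset $f^{-1}(U)=p(g^{-1}(U))\subseteq Y$ is open.'' This claim, as literally written, is false: a surjective continuous map from a compact space need not be open (take the identity from a two-point discrete space onto the same set with the indiscrete topology; or $[0,1]\twoheadrightarrow S^1$, which is not open). What is true is that a surjective continuous map from a compact space onto a \emph{Hausdorff} space is \emph{closed}, and that is exactly the statement your proof uses. So the paper appears to have both a typo (``open'' for ``closed'') and a tacitly assumed Hausdorff hypothesis on $Y$; you have correctly supplied both fixes, including noting that in the actual application (Proposition \ref{p:GluingDiagram}, where $Y$ is the colimit of subspaces of a mapping space into $|K|$) the Hausdorffness of $Y$ holds. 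The one thing worth being slightly more explicit about is the justification that the colimit $Y$ is Hausdorff: this follows because, by Propositions \ref{p:BoundaryCover} and \ref{p:vNIsCompact}, the colimit sits inside $\vN^t_{[0,n]}(K)^{\bI}_{\bO}$ via a continuous injection from a compact space, hence is homeomorphic to a subspace of that Hausdorff space. With that observation, your argument is complete.
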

\begin{proof}
	The existence and the uniqueness of $f$ are straightforward. Any surjective map from a compact space is open; thus, for every open subset $U\subseteq Z$, the subset $f^{-1}(U)= p(g^{-1}(U))\subseteq Y$ is open. Hence, $f$ is continuous.
\end{proof}

\end{document}